\documentclass[12pt]{amsart}

\usepackage[
    a4paper,
    left=2cm,
    right=2cm,
    top=3cm,
    bottom=3cm
]{geometry}

\usepackage{amsmath,amssymb,amsfonts,amsthm,mathtools}
\usepackage{microtype}
\usepackage{enumitem}
\usepackage{hyperref}
\usepackage{mathptmx}
\usepackage{tikz}

\usetikzlibrary{positioning}

\allowdisplaybreaks

\setlist{
    leftmargin=*,
    topsep=2pt,
    itemsep=0pt,
    parsep=0pt,
    partopsep=0pt
}

\hypersetup{
    colorlinks=true,
    linkcolor=black,
    citecolor=black,
    urlcolor=black
}

\tikzset{
    vertex/.style={
        circle,
        fill=black,
        draw=black,
        inner sep=2.5pt
    },
    edge/.style={
        draw=black,
        line width=1.15pt
    },
    graphlabel/.style={
        font=\small
    }
}

\makeatletter

\def\@settitle{%
    \begin{center}
        \normalfont\Large\bfseries
        \@title
    \end{center}
}

\renewcommand{\section}{%
    \@startsection
        {section}
        {1}
        {\z@}
        {-2.1ex \@plus -0.6ex \@minus -0.2ex}
        {0.8ex \@plus 0.2ex}
        {\normalfont\bfseries\scshape\centering}%
}

\renewcommand{\subsection}{%
    \@startsection
        {subsection}
        {2}
        {\z@}
        {-1.6ex \@plus -0.4ex \@minus -0.2ex}
        {0.55ex \@plus 0.15ex}
        {\normalfont\normalsize\bfseries}%
}

\makeatother

\newtheoremstyle{compactplain}
{3pt}
{3pt}
{\itshape}
{}
{\bfseries}
{.}
{0.5em}
{}

\newtheoremstyle{compactdefinition}
{3pt}
{3pt}
{\normalfont}
{}
{\bfseries}
{.}
{0.5em}
{}

\newtheoremstyle{compactremark}
{3pt}
{3pt}
{\normalfont}
{}
{\itshape}
{.}
{0.5em}
{}

\newtheorem{theorem}{Theorem}[section]
\newtheorem{proposition}[theorem]{Proposition}
\newtheorem{lemma}[theorem]{Lemma}
\newtheorem{corollary}[theorem]{Corollary}

\newtheorem{definition}[theorem]{Definition}
\newtheorem{example}[theorem]{Example}

\newcommand{\Ann}{\operatorname{Ann}}
\newcommand{\Spec}{\operatorname{Spec}}
\newcommand{\diam}{\operatorname{diam}}

\title[Multiplicity-One Structure in Graded Zero-Divisor Graphs]
{\normalfont\textbf{Multiplicity-One Structure in Graded Zero-Divisor Graphs}}

\author{\textsc{M. Aijaz}\textsuperscript{1}, \textsc{Shahnawaz Ahmad Rather}\textsuperscript{2}, \textsc{S. Pirzada}\textsuperscript{3}}

\thanks{\textsuperscript{1}Government Degree College for Women Baramulla, Jammu and Kashmir, India. E-mail: \href{mailto:ahaijaz99@gmail.com}{ahaijaz99@gmail.com}.}

\thanks{\textsuperscript{2}Government Degree College Baramulla, Jammu and Kashmir, India. E-mail: \href{mailto:nawazcuj@gmail.com}{nawazcuj@gmail.com}.}

\thanks{\textsuperscript{3}University of Kashmir, Srinagar, Jammu and Kashmir, India. E-mail: \href{mailto:pirzadasd@kashmiruniversity.ac.in}{pirzadasd@kashmiruniversity.ac.in}.}

\date{}

\begin{document}

\begin{abstract}
Let \(H\) be a multiplicative group, \(R\) a commutative \(H\)-graded ring, and \(M\) a nonzero finitely generated gr-semisimple \(H\)-graded \(R\)-module. We assume that \(M\) admits a fixed decomposition into powers of gr-simple modules with pairwise distinct annihilators, each repeated block carrying the componentwise grading. For a nonzero homogeneous element \(x\), we compute the colon ideal \((Rx:M)\) and show that it is determined by the multiplicity-one blocks on which \(x\) has a nonzero component. This leads to the criterion
\(
x\leftrightarrow y
\text{ if and only if }
\sigma(x)\cap\sigma(y)=\varnothing,
\)
where \(x\leftrightarrow y\) means that \(x\) and \(y\) are adjacent in \(\Gamma_H(M)\). Thus adjacency is governed by disjointness of multiplicity-one obstruction supports. The corresponding obstruction classes form a weighted disjointness structure: higher multiplicities affect the sizes of the classes, but not the adjacency relation between their supports. We use this description to study graph invariants, reconstruction, and Kneser-type layers, and, in the finite case, to obtain formulas for the obstruction-class sizes and a reduction of the adjacency spectrum.
\end{abstract}

\maketitle

\noindent \textbf{Keywords:} graded module, zero-divisor graph, gr-semisimple module, semisimple decomposition, annihilator, clique number.

\smallskip

\noindent \textbf{2020 Mathematics Subject Classification:} 05C25, 05C15, 05C50, 13A02, 16W50.

\section{Introduction}

Zero-divisor graphs provide a way of relating algebraic properties to the structure of a graph. The subject has its roots in the work of Beck \cite{Beck1988}, while the standard zero-divisor graph was developed by Anderson and Livingston \cite{AndersonLivingston1999}. Related constructions for modules were studied in \cite{Behboodi2012,Safaeeyan2014}, and chromatic properties of module zero-divisor graphs were considered in \cite{LeeVarmazyar2019}.

The graded zero-divisor graph \(\Gamma_H(M)\) was introduced in \cite{MohdEtAl2026}, where its connectivity, girth, and related properties were investigated. In this paper we consider the adjacency relation itself for a class of graded semisimple modules. The main algebraic ingredient is the colon ideal \((Rx:M)\) associated with a nonzero homogeneous element \(x\). An explicit description of this ideal leads to a simple criterion for adjacency.

Throughout the paper, the prefix \emph{gr-} refers to the graded category. Thus, a nonzero graded module is \emph{gr-simple} if it has no nonzero proper graded submodule, and it is \emph{gr-semisimple} if it is a direct sum of gr-simple graded submodules; see \cite{Dade1980,Nastasescu1982}. We assume that \(M\neq0\) is a finitely generated gr-semisimple \(H\)-graded \(R\)-module with a fixed decomposition \(\displaystyle M=\bigoplus_{\alpha=1}^{t}E_\alpha,~ E_\alpha=S_\alpha^{m_\alpha}, \) where each \(S_\alpha\) is gr-simple and the ideals \(\Ann_R(S_1),\ldots,\Ann_R(S_t)\) are pairwise distinct. Each block \(E_\alpha\) carries the componentwise grading, so its copies of \(S_\alpha\) have the same grading. Copies with different grading shifts are not grouped into the same repeated block. The term \emph{isotypic block} will always refer to this fixed decomposition. Since the annihilator of a gr-simple module is gr-maximal, the distinct-annihilator assumption gives the pairwise comaximality needed later to separate the blocks.

Let \({I_1}=\{\alpha:m_\alpha=1\}\), \({I_{\ge 2}}=\{\alpha:m_\alpha\ge2\}\), and \(s=|{I_1}|\). For a nonzero homogeneous element \(\displaystyle x=\sum_{\alpha=1}^{t}x^{(\alpha)}\), define its multiplicity-one obstruction support by \(\sigma(x)=\{\alpha\in{I_1}:x^{(\alpha)}\neq0\}\). Thus \(\sigma(x)\) records exactly those multiplicity-one blocks in which \(x\) has a nonzero component.

We call the fixed decomposition \emph{multiplicity-free} when \(m_\alpha=1\) for every \(\alpha\).

The main algebraic result is
\[
(Rx:M)=\bigcap_{\alpha\notin\sigma(x)}\Ann_R(S_\alpha).
\]
It follows that, for distinct vertices \(x\) and \(y\),
\[
x\leftrightarrow y
\text{ if and only if }
\sigma(x)\cap\sigma(y)=\varnothing.
\]
Accordingly, adjacency depends only on the multiplicity-one obstruction supports. Blocks of multiplicity at least two do not enter the adjacency criterion, although they may change the sizes of the corresponding obstruction classes.

This gives \(\Gamma_H(M)\) the structure of a weighted disjointness graph. We use it to determine clique and chromatic numbers, distances and girth, and to study reconstruction from the abstract graph. The same description yields Kneser-type subgraphs. For finite modules it also gives formulas for obstruction-class sizes and a reduction of the adjacency spectrum to the quotient matrix associated with the obstruction-class partition.

\section{Preliminaries}

Let \(H\) be a multiplicative group with identity \(e\), and let \(R\) be a commutative ring with identity \(1\neq0\). The ring \(R\) is \emph{\(H\)-graded} if \(\displaystyle R=\bigoplus_{g\in H}R_g\), where each \(R_g\) is an additive subgroup of \(R\) and \(R_gR_h\subseteq R_{gh}\) for all \(g,h\in H\). An \(R\)-module \(M\) is \emph{unitary} if \(1m=m\) for every \(m\in M\). A unitary \(R\)-module \(M\) is an \emph{\(H\)-graded \(R\)-module} if \(\displaystyle M=\bigoplus_{g\in H}M_g\), where each \(M_g\) is an additive subgroup of \(M\) and \(R_gM_h\subseteq M_{gh}\) for all \(g,h\in H\). The subgroups \(R_g\) and \(M_g\) are called the homogeneous components of degree \(g\).

An element \(x\in R\) or \(x\in M\) is \emph{homogeneous} if it belongs to some homogeneous component; if \(x\neq0\), that component is unique and its index is called the degree of \(x\). We write \(\displaystyle h(R)=\bigcup_{g\in H}R_g\) and \(\displaystyle h(M)=\bigcup_{g\in H}M_g\) for the sets of homogeneous elements of \(R\) and \(M\), respectively. A submodule \(N\le M\) is \emph{graded} if \(\displaystyle N=\bigoplus_{g\in H}(N\cap M_g)\), and an ideal \(I\) of \(R\) is graded if \(\displaystyle I=\bigoplus_{g\in H}(I\cap R_g)\). If \(N\le M\) is graded, then \(M/N\) carries the quotient grading \((M/N)_g=(M_g+N)/N\). For graded modules \(N_1,\ldots,N_m\), the componentwise grading on \(N_1\oplus\cdots\oplus N_m\) is given by \((N_1\oplus\cdots\oplus N_m)_g=(N_1)_g\oplus\cdots\oplus(N_m)_g\). The \emph{degree support} of a graded module \(N\) is \(\{g\in H:N_g\neq0\}\).

For \(a\in H\), the \emph{grading shift} \(N(a)\) of a graded module \(N\) is the graded module with the same underlying \(R\)-module and homogeneous components \(N(a)_g=N_{ga}\) for \(g\in H\).

As in \cite{MohdEtAl2026}, the prefix \emph{gr-} indicates that the corresponding notion is taken in the graded category. A cyclic submodule \(Rx\) is \emph{gr-cyclic} when \(x\) is homogeneous. A nonzero graded module is \emph{gr-simple} if it has no nonzero proper graded submodule, and it is \emph{gr-semisimple} if it is a direct sum of gr-simple graded submodules. A proper graded submodule is \emph{gr-maximal} if it is maximal among proper graded submodules; in particular, a proper graded ideal is gr-maximal when it is maximal among proper graded ideals. A homomorphism \(\varphi:N\to L\) of graded modules is degree-preserving if \(\varphi(N_g)\subseteq L_g\) for every \(g\in H\); a bijective degree-preserving homomorphism is a graded isomorphism. Standard terminology from graded ring and module theory is used throughout; see \cite{Dade1980,Nastasescu1982}.

For a submodule \(N\le M\), set \((N:_RM)=\{r\in R:rM\subseteq N\}\), and write simply \((N:M)\) when no ambiguity can arise. For \(X\subseteq M\), let \(\Ann_R(X)=\{r\in R:rX=0\}\). For \(x\in M\), \(Rx=\{rx:r\in R\}\) denotes the cyclic submodule generated by \(x\).

Following \cite{MohdEtAl2026}, for nonzero homogeneous \(x,y\in M\), write \(x\star_M y=0\) whenever \((Ry:M)x=0\) or \((Rx:M)y=0\). Set \(Z_H(M)=\{0\}\cup\{x\in h(M)\setminus\{0\}:x\star_M y=0\text{ for some }0\neq y\in h(M),\ y\neq x\}\). The \emph{graded zero-divisor graph} \(\Gamma_H(M)\) is the simple graph with vertex set \(Z_H(M)^*=Z_H(M)\setminus\{0\}\), in which distinct vertices \(x\) and \(y\) are adjacent precisely when \(x\star_M y=0\). We write \(x\leftrightarrow y\) for adjacency in \(\Gamma_H(M)\). Under the trivial grading, where \(R_e=R\), \(M_e=M\), and all other homogeneous components are zero, this reduces to the corresponding ungraded zero-divisor graph of a module.

All graphs are simple. For a graph \(G\), we write \(V(G)\) and \(E(G)\) for its vertex and edge sets, \(N_G(v)\) and \(N_G[v]\) for the open and closed neighborhoods of \(v\), and \(d_G(u,v)\) for the distance between \(u\) and \(v\). The symbols \(\omega(G)\), \(\chi(G)\), \(\operatorname{diam}G\), and \(g(G)\) denote the clique number, chromatic number, diameter, and girth, respectively; we take \(g(G)=\infty\) when \(G\) is acyclic. A graph is \emph{perfect} if every finite induced subgraph \(F\) satisfies \(\chi(F)=\omega(F)\). For finite \(G\), \(\Spec(G)\) denotes the adjacency spectrum with multiplicities, and the same notation is used for the eigenvalue multiset of a square matrix. We use the standard notation \(K_n\), \(K_{a,b}\), \(\overline K_n\), and \(G\vee H\); see \cite{BondyMurty2008}.

A vertex of a graph is \emph{universal} if it is adjacent to every other vertex; a clique consisting entirely of universal vertices will be called a universal clique.

An \emph{abstract simplicial complex} on a set \(X\) is a family \(\Delta\) of subsets of \(X\) such that \(\varnothing\in\Delta\) and \(Q\subseteq P\in\Delta\) implies \(Q\in\Delta\).

We shall use the following general properties of the graded zero-divisor graph established in \cite{MohdEtAl2026}.

\begin{lemma}
\label{lem:prelim-Mohd}
Let \(x,y\in h(M)\setminus\{0\}\).
\begin{enumerate}
\item If \(x\leftrightarrow y\), then \(rx\star_M sy=0\) for all \(r,s\in h(R)\) such that \(rx\neq0\) and \(sy\neq0\). In particular, if \(rx\neq sy\), then \(rx\leftrightarrow sy\).
\item If \(Rx\cap Ry=0\), then \(x\) and \(y\) are vertices of \(\Gamma_H(M)\), and \(x\leftrightarrow y\).
\end{enumerate}
\end{lemma}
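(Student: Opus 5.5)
The argument uses only the definitions of \(\star_M\), \(Z_H(M)\) and the colon ideal; it does not need the gr-semisimple structure. The key observation is that \((N:M)\) is inclusion-preserving in \(N\): if \(N'\subseteq N\), then \((N':M)\subseteq(N:M)\).

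For part (1), suppose \(x\leftrightarrow y\). By the definition of adjacency, \((Ry:M)x=0\) or \((Rx:M)y=0\). By symmetry of the roles of \(x,y\) (and of \(r,s\)), we may assume \((Ry:M)x=0\). Let \(r,s\in h(R)\) with \(rx\neq0\) and \(sy\neq0\).

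\begin{itemize}
\item Since \(R\) is commutative, \(R(sy)\subseteq Ry\), so \((R\,sy:M)\subseteq(Ry:M)\).
\item Hence
\[
(R\,sy:M)\,rx=r\,(R\,sy:M)\,x\subseteq r\,(Ry:M)\,x=0,
\]
using commutativity of \(R\) once more.
\item Products of homogeneous elements are homogeneous, so \(rx,sy\in h(M)\setminus\{0\}\). Therefore \(rx\star_M sy=0\).
\end{itemize}

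If moreover \(rx\neq sy\), then each of \(rx\) and \(sy\) satisfies \(\star_M\) with a distinct nonzero homogeneous element, namely the other one. So both lie in \(Z_H(M)^*\), and they are adjacent by the definition of \(\Gamma_H(M)\).

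For part (2), suppose \(Rx\cap Ry=0\). Take any \(r\in(Ry:M)\). Then \(rx\in rM\subseteq Ry\), and also \(rx\in Rx\). Hence \(rx\in Rx\cap Ry=0\), which shows \((Ry:M)x=0\), that is, \(x\star_M y=0\).

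It remains to check that \(x\neq y\). If \(x=y\), then \(Rx\cap Ry=Rx\ni 1\cdot x=x\neq0\), because \(M\) is unitary. This contradicts the hypothesis, so \(x\neq y\). Thus \(x\) and \(y\) are distinct nonzero homogeneous elements with \(x\star_M y=0\). Hence both belong to \(Z_H(M)^*\), and \(x\leftrightarrow y\).

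The only point needing care is the bookkeeping in the definition of \(Z_H(M)\): the partner element must be nonzero, homogeneous and distinct. This is why the hypothesis \(rx\neq sy\) is needed in part (1), and why \(x\neq y\) must be verified from the unitary hypothesis in part (2).
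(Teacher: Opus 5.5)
Your proof is correct. The paper gives no proof of this lemma of its own: it imports it from \cite{MohdEtAl2026} as a general property of $\Gamma_H(M)$. Your proposal therefore supplies a self-contained verification directly from the definitions, and it checks out:
\begin{enumerate}
\item For part (1), monotonicity of $N\mapsto(N:M)$ together with $R(sy)\subseteq Ry$ gives $(R\,sy:M)\,rx\subseteq r\,(Ry:M)\,x=0$.
\item For part (2), the inclusion $(Ry:M)x\subseteq Rx\cap Ry$ gives $(Ry:M)x=0$, and the unitary hypothesis gives $x\neq y$.
\item Your reduction to the case $(Ry:M)x=0$ is legitimate, because both the hypothesis and the conclusion are symmetric disjunctions under $(x,r)\leftrightarrow(y,s)$.
\item The inclusion $R(sy)\subseteq Ry$ needs only associativity of the action. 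Commutativity is used only to move $r$ past the colon ideal.
\end{enumerate}

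Within the present paper one could instead derive the lemma from the main criterion. For homogeneous $r$ with $rx\neq0$ one has $A(rx)\subseteq A(x)$, hence $\sigma(rx)\subseteq\sigma(x)$, and disjointness of supports is inherited. For part (2), an index $\alpha\in\sigma(x)\cap\sigma(y)$ would give $E_\alpha\subseteq Rx\cap Ry$ by Proposition~\ref{prop:3.3}. That route, however, depends on gr-semisimplicity and the distinct-annihilator hypothesis. Your argument uses neither, so it matches the generality in which the lemma is stated and cited.
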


\begin{theorem}
\label{thm:prelim-diameter}
If \(V(\Gamma_H(M))\neq\varnothing\), then \(\Gamma_H(M)\) is connected and \(\operatorname{diam}\Gamma_H(M)\le3\).
\end{theorem}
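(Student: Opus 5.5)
We plan to prove Theorem~\ref{thm:prelim-diameter} directly from the definition of \(\star_M\), using Lemma~\ref{lem:prelim-Mohd} for the key adjacencies. The idea is to build, for any two vertices \(x,y\), a short path through multiples of \(x\) and \(y\) or through elements whose cyclic submodules meet trivially.

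\textbf{Step 1: every vertex has a good neighbour.} Let \(x\) be a vertex. By definition there is some \(y\ne x\) with \(x\star_M y=0\). Suppose, say, \((Ry:M)x=0\). We aim to show that \(x\) is adjacent to every nonzero homogeneous element of \(Ry\cap Rx\) and of \(Ry\) other than itself. This follows from Lemma~\ref{lem:prelim-Mohd}(1), applied with \(r=1\) and homogeneous \(s\).

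\textbf{Step 2: two vertices \(x,z\).} We split into cases.
\begin{enumerate}
\item If \(Rx\cap Rz\neq0\), pick a nonzero homogeneous \(w\) in the intersection. This is possible because the intersection is graded, since \(x\) and \(z\) are homogeneous. We then try to show that \(w\) is adjacent to, or equal to, a common neighbour. Concretely, take a neighbour \(x'\) of \(x\). By Lemma~\ref{lem:prelim-Mohd}(1), \(x'\leftrightarrow w\) whenever \(w=rx\ne x'\). Similarly some neighbour \(z'\) of \(z\) is adjacent to \(w\). This gives a walk \(x - x' - w - z' - z\), which has length 4, too long. We shorten it by using \(w=sz\) to obtain \(x'\leftrightarrow sz\), and then \(x'\leftrightarrow z\) directly. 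This should work because \((Rz:M)\subseteq(R sz:M)\)-type monotonicity of colon ideals lets adjacency pass from a multiple back to the element. This gives \(x-x'-z\), or the path \(x-x'-z'-z\).
\item If \(Rx\cap Rz=0\), then Lemma~\ref{lem:prelim-Mohd}(2) gives \(x\leftrightarrow z\) directly.
\end{enumerate}

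\textbf{Main obstacle.} The hardest step is the monotonicity in Case~1. Transferring adjacency from \(w\in Rz\) back to \(z\) is not automatic, because \(x\star w=0\) means \((Rw:M)x=0\) or \((Rx:M)w=0\), and \((Rw:M)\subseteq(Rz:M)\) goes the wrong direction for the first alternative. Our first attempt is to handle the two alternatives separately.
\begin{enumerate}
\item In the alternative \((Rx':M)w=0\), we replace the path by one through \(x'\). Since \(R x'\) has colon ideal killing \(w\), we have \(w\in\) the annihilator-type set of that ideal. We then choose \(w\) adjacent to both \(x\) and \(z\).
\item Otherwise, we use the ideal \((Rw:M)\), which annihilates \(x'\), and argue that either \(x'\) is adjacent to \(z\), or \(x\), \(w\), \(z\) form a path of length at most 3 via \(x'\).
\end{enumerate}

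If this case analysis fails, we fall back on the standard argument of \cite{MohdEtAl2026} and cite it, since the statement is quoted from there as a preliminary.
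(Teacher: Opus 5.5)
\textbf{Gap in Case 1.} The paper gives no proof of this statement. It quotes it from \cite{MohdEtAl2026}, so your fallback citation is exactly the paper's treatment. Your direct argument, however, breaks at the shortening step in Case~1, where you pass adjacency from \(sz\) back to \(z\).

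The claimed monotonicity is reversed. \(Rsz\subseteq Rz\) gives \((Rsz:M)\subseteq(Rz:M)\), and \((Rx':M)sz=0\) does not imply \((Rx':M)z=0\). So both alternatives in the definition of \(\star_M\) carry adjacency from an element to its multiples (this is Lemma~\ref{lem:prelim-Mohd}(1)), never back.

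The step is in fact false, even in your own configuration. Over \(\mathbb Z\), let \(M=\mathbb Z_2\oplus\mathbb Z_3\oplus\mathbb Z_5\), \(x=(1,0,1)\), \(z=(1,1,0)\), and \(w=(1,0,0)=5x=3z\in Rx\cap Rz\). The vertex \(x'=(0,1,0)\) is a neighbour of \(x\), because \((Rx':M)=10\mathbb Z\) kills \(x\). It is adjacent to \(w\), because \((Rw:M)=15\mathbb Z\) kills \(x'\). It is not adjacent to \(z\), because \((Rz:M)=5\mathbb Z\) does not kill \(x'\) and \((Rx':M)=10\mathbb Z\) does not kill \(z\).

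The two repairs under ``Main obstacle'' assume the conclusion (``we then choose \(w\) adjacent to both \(x\) and \(z\)'') without producing such a \(w\). The underlying problem is that a common multiple of the endpoints \(x,z\) is the wrong object. By your Step~1 it is adjacent to the neighbours of \(x\) and \(z\), which only yields your length-\(4\) walk.

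\textbf{The missing idea.} Take a common multiple of the two neighbours instead; this is the module analogue of the element \(ab\) in the Anderson--Livingston argument \cite{AndersonLivingston1999}.

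Let \(x,z\) be distinct nonadjacent vertices, with neighbours \(x'\neq x\) of \(x\) and \(z'\neq z\) of \(z\). Nonadjacency forces \(x'\neq z\) and \(z'\neq x\). If \(x'=z'\) or \(x'\leftrightarrow z'\), then \(d(x,z)\le 3\).

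Otherwise \((Rz':M)x'\neq0\). Since \(z'\) is homogeneous, \((Rz':M)\) is a graded ideal, so some homogeneous \(c\in(Rz':M)\) gives \(w=cx'\neq0\). Then \(w\) is homogeneous and \(w\in Rx'\cap Rz'\), because \(cM\subseteq Rz'\). Your Step~1, applied to \(x\leftrightarrow x'\) and to \(z\leftrightarrow z'\), gives \(w\star_M x=0\) and \(w\star_M z=0\).

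Moreover \(w\neq x\): otherwise \(x\in Rz'\), and Step~1 would give \(x\leftrightarrow z\). Similarly \(w\neq z\). Hence \(x-w-z\) is a path, so any two vertices are at distance at most \(3\). This gives both connectedness and \(\diam\Gamma_H(M)\le3\).
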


\begin{theorem}
\label{thm:prelim-girth}
The graph \(\Gamma_H(M)\) is acyclic or has girth at most \(4\); equivalently, \(g(\Gamma_H(M))=\infty\) or \(g(\Gamma_H(M))\le4\).
\end{theorem}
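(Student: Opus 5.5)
The plan is to follow the classical Anderson–Livingston strategy for zero-divisor graphs. Replace the ring product by the relation \(\star_M\), and use Lemma~\ref{lem:prelim-Mohd}(1) as the substitute for "multiples of a zero-divisor remain adjacent". Assume \(\Gamma_H(M)\) contains a cycle, and let \(x_1\leftrightarrow x_2\leftrightarrow\cdots\leftrightarrow x_n\leftrightarrow x_1\) be a shortest one. Suppose \(n\ge5\); the goal is a contradiction. Minimality forces the cycle to be induced: \(x_i\not\leftrightarrow x_j\) whenever \(i,j\) are not cyclically consecutive.

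\emph{Step 1 (one relation per edge).} Each adjacency \(x_i\leftrightarrow x_{i+1}\) means \((Rx_{i+1}:M)x_i=0\) or \((Rx_i:M)x_{i+1}=0\). So each edge comes with a direction: which of the two colon ideals does the annihilating. Pick an index \(i\) and a homogeneous \(r\) in the relevant colon ideal, say \(r\in(Rx_{i}:M)\) homogeneous. Then \(rM\subseteq Rx_i\), so every \(rx_j\) lies in \(Rx_i\); moreover, when \(rx_j\neq0\), Lemma~\ref{lem:prelim-Mohd}(1) shows that \(rx_j\) inherits all adjacencies of \(x_j\).

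\emph{Step 2 (manufacture a shortcut vertex).} Take \(j=i+2\) and consider \(w=rx_{i+2}\). If \(w\neq0\), then \(w\) is adjacent to \(x_{i+1}\) and \(x_{i+3}\) by Lemma~\ref{lem:prelim-Mohd}(1), unless it coincides with one of them. It also lies in \(Rx_i\), so writing \(w=sx_i\) with \(s\) homogeneous, a second application of Lemma~\ref{lem:prelim-Mohd}(1) makes \(w\) adjacent to \(x_{i-1}\) and \(x_{i+1}\). Then \(x_{i-1}\leftrightarrow w\leftrightarrow x_{i+3}\) together with the path through \(x_{i+1}\) produces a cycle of length at most \(4\), or a triangle such as \(x_{i+1},w,x_{i+3}\) once one rules out \(w\in\{x_{i+1},x_{i+3},x_{i-1}\}\). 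Each such coincidence would itself create a chord of the induced cycle. For example, \(w=x_{i+1}\) would put \(x_{i+1}\in Rx_i\cap Rx_{i+2}\) and make \(x_{i+1}\) adjacent to \(x_{i-1}\), contradicting minimality.

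\emph{Step 3 (the degenerate case).} It remains to handle the case where every such \(w\) vanishes, i.e. \((Rx_i:M)x_{i+2}=0\) for the chosen direction. This is exactly the condition \(x_i\star_M x_{i+2}=0\), which gives the chord \(x_i\leftrightarrow x_{i+2}\) directly (they are distinct since \(n\ge5\)). That contradicts the cycle being induced, so in every case a cycle of length at most \(4\) appears, contradicting \(n\ge5\).

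The main obstacle is Step~2. We need the colon ideal \((Rx_i:M)\) to contain a homogeneous element that is not annihilated on the neighbors, and we must check carefully that the vertex \(w\) differs from the cycle vertices it is claimed to be adjacent to. If the orientation of the edge \(x_i\leftrightarrow x_{i+1}\) is the wrong one, I would first try replacing \(i\) by \(i+1\) and using the reverse direction. On a cycle of length at least \(5\), some index must have a usable orientation. If that fails, I would fall back on Theorem~\ref{thm:prelim-diameter}: with \(\diam\le3\) one can shorten an induced cycle of length \(n\ge 8\). That would leave only the finitely many cases \(5\le n\le7\) for the algebraic argument above.
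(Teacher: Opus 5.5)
The paper does not prove this statement. It is imported from \cite{MohdEtAl2026} as background, and the only girth computation the paper carries out is Theorem~\ref{thrm:girth}. That theorem works only in the semisimple setting and reads the girth off the support criterion: a triangle comes from \(\mathcal{C}_\varnothing\) or from three singleton supports, and \(\Gamma_H(M)\cong K_{a,b}\) when \(s=2\).

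Your argument is therefore a genuinely different and more general route. It uses only Lemma~\ref{lem:prelim-Mohd}(1) and the gradedness of colon ideals, so it applies to an arbitrary graded module. Here \(rx_{i+2}\), with \(r\in(Rx_i:M)\) homogeneous, plays the role that the product \(x_ix_{i+2}\) plays for rings. The argument is correct in substance, and the worries in your last paragraph are unfounded. The orientation of the edges is never used: for \emph{every} index \(i\), either \((Rx_i:M)x_{i+2}=0\), which gives the chord \(x_i\leftrightarrow x_{i+2}\), or some homogeneous \(r\in(Rx_i:M)\) has \(w=rx_{i+2}\neq0\). So the fallback via Theorem~\ref{thm:prelim-diameter} is unnecessary.

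Some bookkeeping should still be fixed.
\begin{enumerate}
\item The ``i.e.'' in Step~3 uses that \((Rx_i:M)\) is a graded ideal. This holds because \(Rx_i\) is a graded submodule.
\item A homogeneous \(s\) with \(w=sx_i\) is obtained by taking the homogeneous component of degree \(\deg(w)\deg(x_i)^{-1}\) of any \(s\) with \(w=sx_i\).
\item The vertices \(x_{i+1},w,x_{i+3}\) do not form a triangle, since \(x_{i+1}\not\leftrightarrow x_{i+3}\) on an induced cycle of length at least \(5\). The short cycle is the \(4\)-cycle \(x_{i+1}\,w\,x_{i+3}\,x_{i+2}\), which requires \(w\notin\{x_{i+1},x_{i+2},x_{i+3}\}\).
\item Your list of coincidences omits \(w=x_{i+2}\). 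This is exactly the case where \(w\in Rx_i\) is needed: \(x_{i+2}=sx_i\) inherits the neighbour \(x_{i-1}\) of \(x_i\), giving the chord \(x_{i+2}\leftrightarrow x_{i-1}\).
\item The cases \(w=x_{i+1}\) and \(w=x_{i+3}\) give the chord \(x_{i+1}\leftrightarrow x_{i+3}\) directly from \(w=rx_{i+2}\).
\item If \(w\) equals any other cycle vertex, the \(4\)-cycle above is still a genuine \(4\)-cycle.
\end{enumerate}
With these adjustments the proof is complete.
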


We shall also use the standard fact that if \(S\) is gr-simple, then \(\Ann_R(S)\) is a gr-maximal ideal of \(R\); see \cite{Nastasescu1982}.

\section{Algebraic Structure of Cyclic Submodules}

Assume \(\displaystyle M=\bigoplus_{\alpha=1}^{t}E_\alpha\), where
\(E_\alpha=S_\alpha^{m_\alpha}\). Put
\(\displaystyle \mathfrak m_\alpha=\Ann_R(S_\alpha)\).

Since \(S_\alpha\) is gr-simple, \(\mathfrak m_\alpha\) is a gr-maximal ideal of \(R\); see \cite{Nastasescu1982}. By the standing hypothesis, the ideals
\(\mathfrak m_1,\ldots,\mathfrak m_t\) are distinct, and hence pairwise comaximal. Therefore, by the Chinese remainder theorem, for every \(\alpha\) there exists \(e_\alpha\in R\) such that \(e_\alpha\equiv1\pmod{\mathfrak m_\alpha}\) and \(e_\alpha\equiv0\pmod{\mathfrak m_\beta}\) for every \(\beta\neq\alpha\). The elements \(e_\alpha\) need not be homogeneous.

For \(\displaystyle x=\sum_{\alpha=1}^{t}x^{(\alpha)}\), write \(A(x)=\{\alpha:x^{(\alpha)}\neq0\}\). The selectors show that no isotypic coordinate occurring in \(x\) is lost upon passing to \(Rx\).

\begin{lemma}
\label{lemma:3.1}
For every \(\alpha\in A(x)\), we have \(x^{(\alpha)}\in Rx\). Consequently, \(\displaystyle Rx=\bigoplus_{\alpha\in A(x)}Rx^{(\alpha)}\).
\end{lemma}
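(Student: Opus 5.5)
The plan is to use the Chinese-remainder selectors \(e_\alpha\) built above. Fix \(\alpha\in A(x)\). Since \(e_\beta\equiv 0\pmod{\mathfrak m_\alpha}\) for \(\beta\neq\alpha\), and \(\mathfrak m_\alpha=\Ann_R(S_\alpha)\) annihilates every copy of \(S_\alpha\), the ideal \(\mathfrak m_\alpha\) annihilates the whole block \(E_\alpha=S_\alpha^{m_\alpha}\). Hence \(e_\alpha\) acts on \(E_\beta\) as zero for every \(\beta\neq\alpha\). On \(E_\alpha\) we have \(e_\alpha-1\in\mathfrak m_\alpha\), so \(e_\alpha\) acts as the identity on \(E_\alpha\). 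Consequently
\[
e_\alpha x=\sum_{\beta=1}^{t}e_\alpha x^{(\beta)}=x^{(\alpha)},
\]
which gives \(x^{(\alpha)}\in Rx\). The selector \(e_\alpha\) need not be homogeneous, but this is harmless: \(Rx\) is the ordinary cyclic submodule and contains \(rx\) for every \(r\in R\).

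For the decomposition, first note that \(x=\sum_{\alpha\in A(x)}x^{(\alpha)}\), so for every \(r\in R\),
\[
rx=\sum_{\alpha\in A(x)}rx^{(\alpha)}\in\sum_{\alpha\in A(x)}Rx^{(\alpha)}.
\]
This gives \(Rx\subseteq\sum_{\alpha\in A(x)}Rx^{(\alpha)}\). Conversely, each \(x^{(\alpha)}\) lies in \(Rx\) by the first part, so each \(Rx^{(\alpha)}\subseteq Rx\) and the two submodules are equal.

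The sum is direct because \(Rx^{(\alpha)}\subseteq E_\alpha\) and the blocks \(E_\alpha\) form a direct sum in \(M\). If \(x\) is homogeneous, then each \(x^{(\alpha)}\) is homogeneous of the same degree, since the decomposition \(M=\bigoplus E_\alpha\) is graded. In that case the summands are gr-cyclic, which is presumably how the lemma is used later.

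The only real point to check is the claim that \(\mathfrak m_\alpha\) annihilates all of \(E_\alpha\). This is immediate because the copies in \(E_\alpha\) are copies of the same module \(S_\alpha\), so the annihilator is \(\Ann_R(S_\alpha)^{}\) and the grading plays no role. I do not expect any obstacle beyond this check.
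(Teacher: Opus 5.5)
Your proof is correct and is essentially the paper's argument: the selector \(e_\alpha\) acts as the identity on \(E_\alpha\) and as zero on the other blocks, so \(e_\alpha x=x^{(\alpha)}\), and directness follows from \(Rx^{(\alpha)}\subseteq E_\alpha\). There is one small index slip: the fact you need is \(e_\alpha\equiv0\pmod{\mathfrak m_\beta}\) with \(\mathfrak m_\beta=\Ann_R(E_\beta)\) for \(\beta\neq\alpha\), not \(e_\beta\equiv0\pmod{\mathfrak m_\alpha}\), though the conclusion you draw is correct.
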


\begin{proof}
Since \(E_\gamma=S_\gamma^{m_\gamma}\), we have \(\Ann_R(E_\gamma)=\mathfrak m_\gamma\). Fix \(\alpha\in A(x)\) and choose the selector \(e_\alpha\). Then, \(e_\alpha-1\in\mathfrak m_\alpha\) and \(e_\alpha\in\mathfrak m_\beta\) for \(\beta\neq\alpha\), so \( e_\alpha x=x^{(\alpha)}\). Hence \(x^{(\alpha)}\in Rx\). It follows that \(\displaystyle Rx=\sum_{\alpha\in A(x)}Rx^{(\alpha)}\), and the sum is direct because \(Rx^{(\alpha)}\subseteq E_\alpha\) and \(M=\bigoplus_{\alpha=1}^tE_\alpha\).
\end{proof}

\begin{lemma}
\label{lemma:3.2}
If \(0\neq u\in h(E_\alpha)\), then \(Ru\) is gr-simple and \(Ru\cong S_\alpha\) as a graded module.
\end{lemma}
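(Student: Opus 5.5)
Let \(0\neq u\in h(E_\alpha)\), say \(u\in (E_\alpha)_g\). The plan is to build a degree-preserving surjection from a grading shift of \(S_\alpha\) onto \(Ru\) and show it is injective.

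First, \(Ru\) is a graded submodule of \(E_\alpha\), since it is generated by a homogeneous element. Indeed, \(Ru=\sum_h R_hu\) with \(R_hu\subseteq M_{hg}\). Next, write \(u=(u_1,\dots,u_{m_\alpha})\) in \(E_\alpha=S_\alpha^{m_\alpha}\). Because the grading on \(E_\alpha\) is componentwise, each \(u_i\) is homogeneous of degree \(g\) in \(S_\alpha\), and some \(u_j\neq0\).

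The key step is to prove that \(\Ann_R(u)=\Ann_R(u_j)\) for every nonzero coordinate \(u_j\). The approach is as follows. For a gr-simple \(S\) and a nonzero homogeneous \(v\in S\) of degree \(g\), we have \(Rv=S\). Hence \(\Ann_R(v)\) is the kernel of the degree-preserving epimorphism \(R(g)\to S\), \(r\mapsto rv\). Since \(S\) has no nonzero proper graded submodules, this kernel is a gr-maximal graded left ideal, i.e. a gr-maximal ideal of the commutative ring \(R\). It also contains \(\mathfrak m_\alpha\). As \(\Ann_R(v)\) is proper and \(\mathfrak m_\alpha\) is gr-maximal, this gives \(\Ann_R(v)=\mathfrak m_\alpha\). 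Applying this to every nonzero \(u_i\), and using \(\Ann_R(0)=R\) for the zero coordinates, we get \(\Ann_R(u)=\bigcap_i\Ann_R(u_i)=\mathfrak m_\alpha\).

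Now fix \(j\) with \(u_j\neq0\). Consider the map \(\pi:Ru\to Ru_j=S_\alpha\), \(ru\mapsto ru_j\), i.e. the restriction of the \(j\)-th projection. It is well defined, degree-preserving, and surjective. It is also injective, because \(ru_j=0\) gives \(r\in\mathfrak m_\alpha=\Ann_R(u)\), so \(ru=0\). Therefore \(Ru\cong S_\alpha\) as graded modules, and \(\pi\) preserves degree with no shift because both \(u\) and \(u_j\) have degree \(g\). Gr-simplicity of \(Ru\) then follows, since graded isomorphisms transport graded submodules.

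The main obstacle is the claim \(\Ann_R(v)=\mathfrak m_\alpha\) for a single homogeneous \(v\in S_\alpha\), specifically the identity \(Rv=S_\alpha\) and the gr-maximality of the kernel. Commutativity of \(R\) is essential here: annihilators of elements are then two-sided ideals containing \(\mathfrak m_\alpha\). Beyond that, I only need to check that the componentwise grading keeps every coordinate homogeneous of the same degree as \(u\). This is precisely the reason for the paper's standing hypothesis that copies in a block share one grading.
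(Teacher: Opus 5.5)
Your proposal is correct and follows the paper's proof. Both restrict the \(j\)-th coordinate projection to \(Ru\), get surjectivity from \(Ru_j=S_\alpha\), and get injectivity from \(\Ann_R(u_j)=\mathfrak m_\alpha\). The only difference is how you obtain \(\Ann_R(u_j)=\mathfrak m_\alpha\): you use gr-maximality of \(\mathfrak m_\alpha\), while the paper reads it off directly from \(Ru_j=S_\alpha\) and commutativity, \(\Ann_R(u_j)=\Ann_R(Ru_j)=\Ann_R(S_\alpha)\). That makes your detour through the shifted epimorphism unnecessary; under the paper's shift convention that map would in any case be \(R(g^{-1})\to S\), not \(R(g)\to S\).
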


\begin{proof}
Write \(u=(u_1,\ldots,u_{m_\alpha})\in E_\alpha=S_\alpha^{m_\alpha}\). Since the direct sum carries the componentwise grading and \(u\) is homogeneous, every nonzero coordinate \(u_i\) is homogeneous of the same degree as \(u\). Choose \(j\) such that \(u_j\neq0\).

The cyclic submodule \(Ru_j\) is a nonzero graded submodule of the gr-simple module \(S_\alpha\); hence \(Ru_j=S_\alpha\). In particular, \(\Ann_R(u_j)=\Ann_R(S_\alpha)=\mathfrak m_\alpha\).

Let \(\pi_j:E_\alpha\to S_\alpha\) denote the \(j\)-th coordinate projection. Since \(E_\alpha\) carries the componentwise grading, \(\pi_j\) is a degree-preserving graded homomorphism. Its restriction \(\varphi:=\pi_j|_{Ru}:Ru\to S_\alpha\) is surjective, since \(\varphi(Ru)=Ru_j=S_\alpha\).

To prove injectivity, let \(ru\in\ker\varphi\). Then \(ru_j=0\), so \(r\in\Ann_R(u_j)=\mathfrak m_\alpha\). Since \(\mathfrak m_\alpha=\Ann_R(S_\alpha)\), the element \(r\) annihilates every coordinate \(u_i\). Hence \(ru=0\). Thus \(\ker\varphi=0\).

Therefore \(\varphi\) is a graded isomorphism \(Ru\cong S_\alpha\). Since \(S_\alpha\) is gr-simple, so is \(Ru\).
\end{proof}

\begin{proposition}
\label{prop:3.3}
For \(0\neq x\in h(M)\), \(E_\alpha\subseteq Rx \text{ if and only if } m_\alpha=1 \ \text{ and }\ x^{(\alpha)}\neq0\).
\end{proposition}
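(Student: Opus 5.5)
We prove the two implications separately. In both directions we use Lemma~\ref{lemma:3.1}, which gives \(Rx=\bigoplus_{\beta\in A(x)}Rx^{(\beta)}\) with \(Rx^{(\beta)}\subseteq E_\beta\). The first step is to check that each component \(x^{(\beta)}\) is homogeneous. This holds because the decomposition \(M=\bigoplus_\beta E_\beta\) consists of graded submodules, so the projections onto the \(E_\beta\) are degree-preserving. Lemma~\ref{lemma:3.2} then applies to every nonzero \(x^{(\beta)}\).

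For the ``if'' direction, assume \(m_\alpha=1\) and \(x^{(\alpha)}\neq0\). Then \(E_\alpha=S_\alpha\) is gr-simple. By Lemma~\ref{lemma:3.1}, \(x^{(\alpha)}\in Rx\), so \(Rx^{(\alpha)}\) is a nonzero graded submodule of \(E_\alpha\); it is graded since \(x^{(\alpha)}\) is homogeneous. Gr-simplicity gives \(Rx^{(\alpha)}=E_\alpha\), and hence \(E_\alpha\subseteq Rx\).

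For the ``only if'' direction, assume \(E_\alpha\subseteq Rx\). Since \(E_\alpha\neq0\), the intersection \(E_\alpha\cap Rx\) is nonzero. Intersecting the decomposition of Lemma~\ref{lemma:3.1} with \(E_\alpha\) gives \(Rx\cap E_\alpha=Rx^{(\alpha)}\) if \(\alpha\in A(x)\), and \(0\) otherwise. So \(\alpha\in A(x)\), i.e.\ \(x^{(\alpha)}\neq0\), and \(E_\alpha\subseteq Rx^{(\alpha)}\subseteq E_\alpha\), so \(E_\alpha=Rx^{(\alpha)}\). By Lemma~\ref{lemma:3.2}, \(Rx^{(\alpha)}\cong S_\alpha\) is gr-simple. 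Therefore \(E_\alpha=S_\alpha^{m_\alpha}\) is gr-simple.

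It remains to show \(m_\alpha=1\). If \(m_\alpha\ge2\), then the first copy \(S_\alpha\oplus0\oplus\cdots\oplus0\) is a graded submodule of \(E_\alpha\) under the componentwise grading. It is nonzero because \(S_\alpha\neq0\), and proper because \(m_\alpha\ge 2\). This contradicts the gr-simplicity of \(E_\alpha\), so \(m_\alpha=1\).

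The only delicate point is the identity \(Rx\cap E_\alpha=Rx^{(\alpha)}\). It requires the direct-sum structure from Lemma~\ref{lemma:3.1} together with \(M=\bigoplus_\beta E_\beta\): an element of \(Rx\) has components in \(Rx^{(\beta)}\subseteq E_\beta\), and it lies in \(E_\alpha\) exactly when all components with \(\beta\neq\alpha\) vanish. Once this is in place, the rest follows directly from gr-simplicity.
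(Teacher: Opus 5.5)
Your proof is correct and follows the paper's argument essentially step for step. Like the paper, you use Lemma~\ref{lemma:3.1} to get \(Rx\cap E_\alpha=Rx^{(\alpha)}\), Lemma~\ref{lemma:3.2} to make \(E_\alpha\) gr-simple, a single copy of \(S_\alpha\) as a nonzero proper graded submodule when \(m_\alpha\ge2\), and gr-simplicity of \(S_\alpha\) for the converse; your explicit check that the components \(x^{(\beta)}\) are homogeneous is a detail the paper leaves implicit but which is needed to invoke Lemma~\ref{lemma:3.2}.
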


\begin{proof}
Assume first that \(E_\alpha\subseteq Rx\). By Lemma~\ref{lemma:3.1}, \(\displaystyle Rx=\bigoplus_{\beta\in A(x)}Rx^{(\beta)}, ~Rx^{(\beta)}\subseteq E_\beta\). Since \(E_\alpha\neq0\) and \(E_\alpha\subseteq Rx\), necessarily \(\alpha\in A(x)\); hence \(x^{(\alpha)}\neq0\). Moreover, the directness of the isotypic decomposition gives \(Rx\cap E_\alpha=Rx^{(\alpha)}\). Hence \(E_\alpha=Rx^{(\alpha)}\).

By Lemma~\ref{lemma:3.2}, \(Rx^{(\alpha)}\) is gr-simple. Hence \(E_\alpha\) is gr-simple. Since \(E_\alpha=S_\alpha^{m_\alpha}\), this is impossible when \(m_\alpha\ge2\), because one copy of \(S_\alpha\) is then a nonzero proper graded submodule of \(E_\alpha\). Therefore \(m_\alpha=1\).

Conversely, suppose that \(m_\alpha=1\) and \(x^{(\alpha)}\neq0\). Then \(E_\alpha=S_\alpha\). Since \(x^{(\alpha)}\) is homogeneous and nonzero, gr-simplicity of \(S_\alpha\) gives \(Rx^{(\alpha)}=S_\alpha=E_\alpha\). Lemma~\ref{lemma:3.1} gives \(Rx^{(\alpha)}\subseteq Rx\), and hence \(E_\alpha\subseteq Rx\).
\end{proof}



Proposition~\ref{prop:3.3} identifies the isotypic blocks which are already contained in \(Rx\). It is convenient to record exactly their indices. For \(0\neq x\in h(M)\), put \(\sigma(x)=A(x)\cap{I_1}\). Thus \(\alpha\in\sigma(x)\) precisely when \(E_\alpha\subseteq Rx\). As usual, the intersection of an empty family of ideals is taken to be \(R\).

\begin{lemma}
\label{lemma:3.4}
Let \(E=S^m\), where \(S\) is gr-simple, and let \(r\in h(R)\). If \(L\subsetneq E\) is a graded submodule and \(rE\subseteq L\), then \(rE=0\).
\end{lemma}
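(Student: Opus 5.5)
The plan is to show that multiplication by a homogeneous element either kills the gr-simple module \(S\) or maps it onto \(S\), and then to transfer this dichotomy to \(E=S^m\) one coordinate at a time. The hypothesis \(rE\subseteq L\subsetneq E\) will rule out the second alternative.

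First, since \(r\in h(R)\), say \(r\in R_a\), the submodule \(rS\) of \(S\) is graded. Indeed, for \(s=\sum_g s_g\in S\) we have \(rs=\sum_g rs_g\), where each \(rs_g\in rS\cap S_{ag}\). Thus every element of \(rS\) is a sum of homogeneous elements lying in \(rS\). Because \(S\) is gr-simple, this gives two cases: either \(rS=0\) or \(rS=S\).

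Next, I compare \(rE\) with \(rS\) coordinatewise. For \((s_1,\ldots,s_m)\in S^m\) we have \(r(s_1,\ldots,s_m)=(rs_1,\ldots,rs_m)\). The coordinates can be chosen independently, so \(rE=(rS)^m\) as subsets of \(E\).

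Now consider the two cases. If \(rS=0\), then \(rE=0\), which is the desired conclusion. If \(rS=S\), then \(rE=S^m=E\). Combined with \(rE\subseteq L\), this gives \(L=E\), contradicting \(L\subsetneq E\). Hence only the first case can occur, and so \(rE=0\).

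The only step that needs care is checking that \(rS\) is a graded submodule, so that gr-simplicity applies. This is exactly where homogeneity of \(r\) is used. For a nonhomogeneous \(r\), the submodule \(rS\) need not be graded, and the dichotomy could fail. Note also that the argument does not need \(L\) to be graded; it only uses \(L\neq E\).
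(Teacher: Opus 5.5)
Your proof is correct and follows essentially the same argument as the paper: homogeneity of \(r\) makes \(rS\) a graded submodule, so gr-simplicity forces \(rS=0\) or \(rS=S\), and the identity \(rE=(rS)^m\) then rules out \(rS=S\). Your explicit check that \(rS\) is graded is more detailed than the paper's, and you are right that gradedness of \(L\) is not needed.
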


\begin{proof}
Suppose that \(rE\neq0\). Then \(rS\neq0\). Since \(r\) is homogeneous, \(rS\) is a nonzero graded submodule of \(S\), and gr-simplicity gives \(rS=S\). Therefore, \(\displaystyle rE=r(S^m)=(rS)^m=S^m=E\), contrary to \(rE\subseteq L\subsetneq E\). Hence \(rE=0\).
\end{proof}

The preceding description of the cyclic submodule \(Rx\) gives an explicit form of its colon ideal.

\begin{theorem}
\label{thm:colon}
For every \(0\neq x\in h(M)\), \(\displaystyle (Rx:M)= \bigcap_{\alpha\notin\sigma(x)} \mathfrak m_\alpha\).
\end{theorem}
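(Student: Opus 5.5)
The plan is to prove the two inclusions separately. The inclusion \(\supseteq\) should follow directly from Proposition~\ref{prop:3.3}. For \(\subseteq\), I will reduce to homogeneous elements of \((Rx:M)\), so that Lemma~\ref{lemma:3.4} applies block by block.

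\emph{Inclusion \(\supseteq\).} Let \(r\in\bigcap_{\alpha\notin\sigma(x)}\mathfrak m_\alpha\). Since \(\Ann_R(E_\alpha)=\mathfrak m_\alpha\), we get \(rE_\alpha=0\) for every \(\alpha\notin\sigma(x)\). Hence
\[
rM=\sum_{\alpha=1}^{t}rE_\alpha=\sum_{\alpha\in\sigma(x)}rE_\alpha\subseteq\sum_{\alpha\in\sigma(x)}E_\alpha .
\]
For \(\alpha\in\sigma(x)\) we have \(m_\alpha=1\) and \(x^{(\alpha)}\neq0\), so Proposition~\ref{prop:3.3} gives \(E_\alpha\subseteq Rx\). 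Therefore \(rM\subseteq Rx\), that is, \(r\in(Rx:M)\). When \(\sigma(x)\) is empty, the intersection over all \(\alpha\) is \(\Ann_R(M)\), and the same computation gives \(rM=0\subseteq Rx\).

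\emph{Inclusion \(\subseteq\): reduction to homogeneous elements.} This reduction is the only delicate point, because Lemma~\ref{lemma:3.4} needs \(r\) homogeneous. Since \(x\) is homogeneous, \(Rx\) is a graded submodule. I will show that \((Rx:M)\) is a graded ideal, arguing as follows.

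Let \(rM\subseteq Rx\) with \(r=\sum_g r_g\), and let \(m\in M_h\) be homogeneous. Then \(rm=\sum_g r_gm\), and the term \(r_gm\) lies in \(M_{gh}\); these degrees are pairwise distinct. Since \(Rx\) is graded, each \(r_gm\) lies in \(Rx\). As the homogeneous elements generate \(M\), it follows that \(r_gM\subseteq Rx\) for every \(g\).

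It therefore suffices to show that every homogeneous \(r\in(Rx:M)\) lies in \(\mathfrak m_\alpha\) for each \(\alpha\notin\sigma(x)\). Because the target is an ideal, this yields the inclusion for arbitrary \(r\).

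\emph{Inclusion \(\subseteq\): the homogeneous case.} Fix a homogeneous \(r\in(Rx:M)\) and an index \(\alpha\notin\sigma(x)\). Then \(rE_\alpha\subseteq E_\alpha\cap Rx\). By Lemma~\ref{lemma:3.1} and the directness of \(M=\bigoplus_\beta E_\beta\), the intersection \(E_\alpha\cap Rx\) equals \(Rx^{(\alpha)}\) if \(\alpha\in A(x)\), and equals \(0\) otherwise.

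This intersection \(L\) is a graded submodule of \(E_\alpha\), and it is proper in both cases:
\begin{enumerate}
\item if \(\alpha\notin A(x)\), then \(L=0\neq E_\alpha\);
\item if \(\alpha\in A(x)\), then \(\alpha\notin I_1\) because \(\alpha\notin\sigma(x)\), so Proposition~\ref{prop:3.3} gives \(E_\alpha\not\subseteq Rx\), hence \(L\subsetneq E_\alpha\).
\end{enumerate}
Lemma~\ref{lemma:3.4}, applied with \(E=E_\alpha=S_\alpha^{m_\alpha}\), then gives \(rE_\alpha=0\), i.e.\ \(r\in\mathfrak m_\alpha\). This completes the inclusion \(\subseteq\), and with it the proof of the theorem.
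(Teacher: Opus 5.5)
Your proof is correct and takes essentially the same route as the paper. Both split $Rx$ along the blocks via Lemma~\ref{lemma:3.1}, use Proposition~\ref{prop:3.3} for the blocks indexed by $\sigma(x)$, and apply Lemma~\ref{lemma:3.4} to homogeneous elements on the remaining blocks, where $Rx\cap E_\alpha$ is a proper graded submodule. The paper packages this as the blockwise computation $(Rx:M)=\Ann_R(M/Rx)=\bigcap_\alpha\Ann_R(E_\alpha/L_\alpha)$ and states the needed gradedness of these annihilators in one line. You instead prove the two inclusions directly and check explicitly that $(Rx:M)$ is a graded ideal, which makes the reduction to homogeneous $r$ fully transparent.
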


\begin{proof}
For each \(\alpha\), let \(L_\alpha=Rx\cap E_\alpha\). By Lemma~\ref{lemma:3.1}, the cyclic submodule \(Rx\) splits along the isotypic decomposition, so \(Rx=\bigoplus_{\alpha=1}^{t}L_\alpha\). Hence, \(M/Rx\cong\bigoplus_{\alpha=1}^{t}E_\alpha/L_\alpha\). Using the standard identity \((Rx:M)=\Ann_R(M/Rx)\), we get \(\displaystyle (Rx:M)=\bigcap_{\alpha=1}^{t}\Ann_R(E_\alpha/L_\alpha)\). If \(\alpha\in\sigma(x)\), Proposition~\ref{prop:3.3} gives \(E_\alpha\subseteq Rx\). Thus \(L_\alpha=E_\alpha\), and so \(\Ann_R(E_\alpha/L_\alpha)=R\).

Now let \(\alpha\notin\sigma(x)\). Then \(L_\alpha\subsetneq E_\alpha\). Since \(\mathfrak m_\alpha=\Ann_R(E_\alpha)\), we have \(\mathfrak m_\alpha\subseteq\Ann_R(E_\alpha/L_\alpha)\). For the reverse inclusion, take a homogeneous element \(r\in\Ann_R(E_\alpha/L_\alpha)\). Then \(rE_\alpha\subseteq L_\alpha\). Since \(L_\alpha\) is a proper graded submodule of \(E_\alpha\), Lemma~\ref{lemma:3.4} gives \(rE_\alpha=0\). Hence \(r\in\mathfrak m_\alpha\). Both ideals are graded, and therefore \(\Ann_R(E_\alpha/L_\alpha)=\mathfrak m_\alpha\).

The factors indexed by \(\sigma(x)\) contribute only the unit ideal, while the remaining factors contribute \(\mathfrak m_\alpha\). Therefore, \(\displaystyle (Rx:M)=\bigcap_{\alpha\notin\sigma(x)}\mathfrak m_\alpha\).
\end{proof}

The colon ideal sees only the multiplicity-one blocks which are completely contained in \(Rx\). The annihilator of \(x\), in contrast, records every isotypic component in which \(x\) has a nonzero coordinate.

\begin{corollary}
\label{cor:3.6}
For \(0\neq x\in h(M)\), \(\displaystyle \Ann_R(x) = \bigcap_{\alpha\in A(x)} \mathfrak m_\alpha\).
\end{corollary}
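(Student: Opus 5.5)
The plan is to decompose $x$ along the isotypic blocks and reduce the annihilator to a block-by-block computation. Write $x=\sum_{\alpha=1}^t x^{(\alpha)}$ with each $x^{(\alpha)}\in E_\alpha$. Since $M=\bigoplus_\alpha E_\alpha$ and each $E_\alpha$ is a submodule, $rx=\sum_\alpha rx^{(\alpha)}$ with $rx^{(\alpha)}\in E_\alpha$. Hence $rx=0$ if and only if $rx^{(\alpha)}=0$ for every $\alpha$. This gives
\[
\Ann_R(x)=\bigcap_{\alpha=1}^{t}\Ann_R(x^{(\alpha)})=\bigcap_{\alpha\in A(x)}\Ann_R(x^{(\alpha)}),
\]
because the indices $\alpha\notin A(x)$ have $x^{(\alpha)}=0$ and contribute $R$. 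The case $A(x)=\varnothing$ cannot occur, since $x\neq0$.

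It remains to show that $\Ann_R(x^{(\alpha)})=\mathfrak m_\alpha$ for each $\alpha\in A(x)$. First I will check that $x^{(\alpha)}$ is homogeneous. For the fixed decomposition $M=\bigoplus E_\alpha$, the grading of $M$ is the direct sum of the gradings of the $E_\alpha$, because each $E_\alpha$ is a graded submodule. So the block projections are degree-preserving, and the components of the homogeneous element $x$ are homogeneous of the same degree. Alternatively, Lemma~\ref{lemma:3.1} gives $x^{(\alpha)}\in Rx$, but that alone does not give homogeneity; the projection argument is the one to use.

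Once $0\neq x^{(\alpha)}\in h(E_\alpha)$, Lemma~\ref{lemma:3.2} gives $Rx^{(\alpha)}\cong S_\alpha$ as $R$-modules. Then
\[
\Ann_R(x^{(\alpha)})=\Ann_R(Rx^{(\alpha)})=\Ann_R(S_\alpha)=\mathfrak m_\alpha.
\]
One can also argue directly from the proof of Lemma~\ref{lemma:3.2}. The kernel of $r\mapsto rx^{(\alpha)}$ equals the kernel of $r\mapsto rx^{(\alpha)}_j$ for a nonzero coordinate $j$, and that kernel is $\Ann_R(S_\alpha)$ because $Rx^{(\alpha)}_j=S_\alpha$. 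Substituting into the intersection above finishes the proof.

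The main point to be careful about is the homogeneity of the block components $x^{(\alpha)}$, which the whole argument relies on. I expect it to follow immediately from the $E_\alpha$ being graded submodules in a direct-sum decomposition. Everything else is formal.
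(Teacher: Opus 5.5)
Your proof is correct and follows the paper's argument. You split $\Ann_R(x)$ along the direct decomposition into $\bigcap_{\alpha\in A(x)}\Ann_R(x^{(\alpha)})$ and then identify each factor with $\mathfrak m_\alpha$ via Lemma~\ref{lemma:3.2}. Your explicit check that the block components $x^{(\alpha)}$ are homogeneous is sound: the $E_\alpha$ are graded submodules, so the block projections preserve degree. The paper uses this fact tacitly when it invokes Lemma~\ref{lemma:3.2}.
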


\begin{proof}
Since the decomposition of \(M\) is direct, \(\displaystyle \Ann_R(x) = \bigcap_{\alpha\in A(x)} \Ann_R(x^{(\alpha)})\). By Lemma~\ref{lemma:3.2},
\(\displaystyle Rx^{(\alpha)}\cong S_\alpha\), and hence \(\Ann_R(x^{(\alpha)})=\mathfrak m_\alpha\). The result follows.
\end{proof}

Pairwise comaximality also makes inclusions between intersections of the ideals \(\mathfrak m_\alpha\) entirely combinatorial.

\begin{lemma}
\label{lem:intersection}
For \(I,J\subseteq\{1,\ldots,t\}\), \(\displaystyle \bigcap_{\alpha\in I}\mathfrak m_\alpha \subseteq \bigcap_{\beta\in J}\mathfrak m_\beta \text{ if and only if } J\subseteq I\).
\end{lemma}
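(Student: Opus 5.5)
The plan is to prove the two implications separately. The reverse direction is formal, and the forward direction uses the Chinese remainder selectors \(e_\alpha\) constructed at the start of this section. Throughout, the empty intersection is read as \(R\), as the paper stipulates.

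For the implication ``\(J\subseteq I\Rightarrow\)'', note that intersecting over a larger index set gives a smaller ideal. Hence \(\bigcap_{\alpha\in I}\mathfrak m_\alpha\subseteq\bigcap_{\beta\in J}\mathfrak m_\beta\) immediately. This also covers \(J=\varnothing\), where the right side is \(R\).

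For the converse, I will argue by contraposition. Suppose \(\beta\in J\setminus I\); I will exhibit an element of \(\bigcap_{\alpha\in I}\mathfrak m_\alpha\) that does not lie in \(\mathfrak m_\beta\). The natural candidate is the selector \(e_\beta\). By construction, \(e_\beta\equiv0\pmod{\mathfrak m_\alpha}\) for every \(\alpha\neq\beta\), so \(e_\beta\in\mathfrak m_\alpha\) for all \(\alpha\in I\), since \(\beta\notin I\). On the other hand, \(e_\beta\equiv1\pmod{\mathfrak m_\beta}\). If \(e_\beta\) were in \(\mathfrak m_\beta\), then \(1\in\mathfrak m_\beta\), which contradicts properness of the gr-maximal ideal \(\mathfrak m_\beta=\Ann_R(S_\beta)\); properness holds because \(S_\beta\neq0\). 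Thus \(e_\beta\) lies in the left side but not in \(\bigcap_{\gamma\in J}\mathfrak m_\gamma\subseteq\mathfrak m_\beta\), so the inclusion fails.

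The only delicate point is the existence of the selectors. This rests on pairwise comaximality of distinct gr-maximal ideals: if \(\mathfrak m_\alpha\neq\mathfrak m_\beta\), then \(\mathfrak m_\alpha+\mathfrak m_\beta\) is a graded ideal strictly containing \(\mathfrak m_\alpha\), hence equal to \(R\) by gr-maximality. The Chinese remainder theorem then applies to finitely many pairwise comaximal ideals. Both facts were already recorded before Lemma~\ref{lemma:3.1}, so I would simply cite them. The case \(I=\varnothing\) requires no separate treatment: then \(e_\beta\) trivially lies in the empty intersection \(R\), and the argument goes through unchanged. Alternatively one can avoid selectors altogether by noting that \(\prod_{\alpha\in I}\mathfrak m_\alpha\subseteq\mathfrak m_\beta\) would force some \(\mathfrak m_\alpha\subseteq\mathfrak m_\beta\) by primeness. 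But gr-maximal ideals are only gr-prime, so the selector argument is cleaner.
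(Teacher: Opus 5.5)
Your proof is correct and is essentially the paper's argument. The reverse inclusion is immediate. For the forward direction you pick \(\beta\in J\setminus I\) and use the selector \(e_\beta\), which lies in \(\bigcap_{\alpha\in I}\mathfrak m_\alpha\) but not in \(\mathfrak m_\beta\); the paper phrases this as a contradiction rather than a contrapositive, and your justification of comaximality and your treatment of \(I=\varnothing\) are fine.
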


\begin{proof}
Suppose first that \(J\subseteq I\). Then every ideal appearing in the intersection indexed by \(J\) also appears in the intersection indexed by \(I\), so the required inclusion is immediate.

Conversely, assume that \(\bigcap_{\alpha\in I}\mathfrak m_\alpha \subseteq\bigcap_{\beta\in J}\mathfrak m_\beta\). Suppose, toward a contradiction, that \(J\nsubseteq I\). Choose \(\beta\in J\setminus I\). By the choice of the selectors above, there exists \(e_\beta\in R\) such that \(\displaystyle
e_\beta\equiv1\pmod{\mathfrak m_\beta} \text{ and } e_\beta\equiv0\pmod{\mathfrak m_\alpha} (\alpha\neq\beta)\). Since \(\beta\notin I\), we have \(e_\beta\in\bigcap_{\alpha\in I}\mathfrak m_\alpha\). By the assumed inclusion, \(e_\beta\in\bigcap_{\gamma\in J}\mathfrak m_\gamma\). In particular, since \(\beta\in J\), we have \(e_\beta\in\mathfrak m_\beta\).

On the other hand, \(e_\beta\equiv1\pmod{\mathfrak m_\beta}\), so \(e_\beta-1\in\mathfrak m_\beta\). Hence \(1\in\mathfrak m_\beta\), contradicting the fact that \(\mathfrak m_\beta\) is a proper ideal. Therefore \(J\subseteq I\).
\end{proof}
\section{Multiplicity-One Adjacency and Obstruction Supports}

The definition of \(x\star_M y\) is asymmetric in appearance: either \((Rx:M)y=0\) or \((Ry:M)x=0\). The colon formula shows that, in the present semisimple setting, both conditions are equivalent.

\begin{theorem}
\label{thm:symmetric}
For distinct nonzero homogeneous \(x,y\), the following are equivalent:
\begin{enumerate}
\item \((Rx:M)y=0\);
\item \((Ry:M)x=0\);
\item \(\sigma(x)\cap\sigma(y)=\varnothing\).
\end{enumerate}
\end{theorem}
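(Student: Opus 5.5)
By symmetry of condition (3) in \(x\) and \(y\), it suffices to prove (1)\(\iff\)(3). The equivalence (2)\(\iff\)(3) then follows by interchanging the roles of \(x\) and \(y\).

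The plan is to turn condition (1) into an inclusion of ideals and then apply Lemma~\ref{lem:intersection}. Condition (1) says \((Rx:M)\subseteq\Ann_R(y)\). Theorem~\ref{thm:colon} gives
\[
(Rx:M)=\bigcap_{\alpha\notin\sigma(x)}\mathfrak m_\alpha,
\]
and Corollary~\ref{cor:3.6} gives \(\Ann_R(y)=\bigcap_{\beta\in A(y)}\mathfrak m_\beta\). So (1) is equivalent to
\[
\bigcap_{\alpha\in\{1,\ldots,t\}\setminus\sigma(x)}\mathfrak m_\alpha\subseteq\bigcap_{\beta\in A(y)}\mathfrak m_\beta .
\]
By Lemma~\ref{lem:intersection}, this holds exactly when \(A(y)\subseteq\{1,\ldots,t\}\setminus\sigma(x)\), that is, when \(A(y)\cap\sigma(x)=\varnothing\). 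The convention that an empty intersection equals \(R\) keeps the lemma valid even when \(\sigma(x)=\{1,\ldots,t\}\). In that case the left side is \(R\), and the inclusion forces \(A(y)=\varnothing\), which is impossible since \(y\neq0\). This is consistent with the set condition, because then \(A(y)\cap\sigma(x)=A(y)\neq\varnothing\).

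The remaining step, which I expect to be the only real point, is to show
\[
A(y)\cap\sigma(x)=\varnothing\iff\sigma(y)\cap\sigma(x)=\varnothing .
\]
Since \(\sigma(x)\subseteq I_1\), we have \(A(y)\cap\sigma(x)=A(y)\cap I_1\cap\sigma(x)=\sigma(y)\cap\sigma(x)\), using the definition \(\sigma(y)=A(y)\cap I_1\). The two conditions are therefore literally the same. This is where the multiplicity-one restriction enters: blocks with \(m_\alpha\ge2\) never lie in \(\sigma(x)\), so a coordinate of \(y\) there cannot obstruct annihilation by \((Rx:M)\).

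The order of the write-up is: (i) rewrite (1) as an ideal inclusion via \((Rx:M)y=0\iff(Rx:M)\subseteq\Ann_R(y)\); (ii) substitute the formulas from Theorem~\ref{thm:colon} and Corollary~\ref{cor:3.6}; (iii) apply Lemma~\ref{lem:intersection}, with the empty-family convention; (iv) simplify the resulting set condition using \(\sigma(x)\subseteq I_1\); (v) obtain (2)\(\iff\)(3) by swapping \(x\) and \(y\). The hypothesis that \(x\neq y\) is not used in the argument; it only matches the vertex setting.
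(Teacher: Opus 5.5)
Your proposal is correct and follows the paper's proof essentially step for step. You rewrite (1) as \((Rx:M)\subseteq\Ann_R(y)\), substitute Theorem~\ref{thm:colon} and Corollary~\ref{cor:3.6}, apply Lemma~\ref{lem:intersection}, use \(\sigma(x)\subseteq I_1\) to turn \(A(y)\cap\sigma(x)=\varnothing\) into \(\sigma(x)\cap\sigma(y)=\varnothing\), and swap \(x\) and \(y\); your explicit check of the empty-intersection case \(\sigma(x)=\{1,\ldots,t\}\) is harmless extra care that the paper leaves implicit.
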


\begin{proof}
By Theorem~\ref{thm:colon}, \(\displaystyle (Rx:M)=\bigcap_{\alpha\notin\sigma(x)}\mathfrak m_\alpha\), and by Corollary~\ref{cor:3.6}, \(\displaystyle \Ann_R(y)=\bigcap_{\beta\in A(y)}\mathfrak m_\beta\).

Now \((Rx:M)y=0\) if and only if \((Rx:M)\subseteq\Ann_R(y)\). By Lemma~\ref{lem:intersection}, this is equivalent to \(A(y)\subseteq\{1,\ldots,t\}\setminus\sigma(x)\). Equivalently, \(A(y)\cap\sigma(x)=\varnothing\). Since \(\sigma(x)\subseteq{I_1}\) and \(\sigma(y)=A(y)\cap{I_1}\), this is the same as \(\sigma(x)\cap\sigma(y)=\varnothing\). Thus \((Rx:M)y=0\) if and only if \(\sigma(x)\cap\sigma(y)=\varnothing\). Interchanging \(x\) and \(y\) gives \((Ry:M)x=0\) under the same conditions.
\end{proof}

We can now state the main adjacency criterion.

\begin{theorem}
\label{thrm:Main-AT}
For distinct  homogeneous elements \(x,y\in M\setminus\{0\}\), \( x\star_M y=0 \text{ if and only if } \sigma(x)\cap\sigma(y)=\varnothing\).
\end{theorem}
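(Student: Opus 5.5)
The plan is to unwind the definition of \(x\star_M y=0\) and reduce it directly to Theorem~\ref{thm:symmetric}. By definition, for distinct nonzero homogeneous \(x,y\), we have \(x\star_M y=0\) precisely when \((Ry:M)x=0\) or \((Rx:M)y=0\). Theorem~\ref{thm:symmetric} shows that each of these two conditions is equivalent to \(\sigma(x)\cap\sigma(y)=\varnothing\). The disjunction is therefore also equivalent to that single condition, and the equivalence follows.

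Concretely, I would write the forward direction first. Assume \(x\star_M y=0\). Then either \((Rx:M)y=0\), in which case the implication (1)\(\Rightarrow\)(3) of Theorem~\ref{thm:symmetric} gives disjointness, or \((Ry:M)x=0\), in which case (2)\(\Rightarrow\)(3) gives the same conclusion.

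For the converse, assume \(\sigma(x)\cap\sigma(y)=\varnothing\). The implication (3)\(\Rightarrow\)(1) gives \((Rx:M)y=0\), so \(x\star_M y=0\) by definition.

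There is no genuine obstacle here. The only point to check is that the hypotheses match: Theorem~\ref{thm:symmetric} requires \(x,y\) distinct, nonzero and homogeneous, which is exactly the hypothesis of the present statement. All the substance (the colon formula of Theorem~\ref{thm:colon}, the annihilator formula of Corollary~\ref{cor:3.6}, and the combinatorial inclusion criterion of Lemma~\ref{lem:intersection}) has already been absorbed into Theorem~\ref{thm:symmetric}. I would also remark afterwards that, since the vertices of \(\Gamma_H(M)\) are exactly the nonzero homogeneous elements admitting such a partner, this immediately yields the adjacency criterion \(x\leftrightarrow y\iff\sigma(x)\cap\sigma(y)=\varnothing\) for distinct vertices.
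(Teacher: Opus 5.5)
Your proposal is correct and matches the paper's argument: the paper also derives the statement directly from Theorem~\ref{thm:symmetric} and the definition of \(x\star_M y=0\) as the disjunction \((Ry:M)x=0\) or \((Rx:M)y=0\). Your case split for the forward direction, together with (3)\(\Rightarrow\)(1) for the converse, simply writes out that one-line deduction.
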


\begin{proof}
This follows immediately from Theorem~\ref{thm:symmetric} and the definition of \(\star_M\).
\end{proof}

\begin{corollary}
\label{cor:Main-AT}
If \(x\) and \(y\) are distinct nonzero homogeneous elements and \(\sigma(x)\cap\sigma(y)=\varnothing\), then both \(x\) and \(y\) are vertices of \(\Gamma_H(M)\), and \(x\leftrightarrow y\).
\end{corollary}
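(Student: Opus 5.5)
The plan is to combine Theorem~\ref{thrm:Main-AT} with the definitions of \(Z_H(M)\) and \(\Gamma_H(M)\). The corollary asks for two things: that \(x\) and \(y\) are vertices of \(\Gamma_H(M)\), and that they are adjacent. Both follow once \(x\star_M y=0\) is established.

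First, let \(x\) and \(y\) be distinct nonzero homogeneous elements with \(\sigma(x)\cap\sigma(y)=\varnothing\). Theorem~\ref{thrm:Main-AT} then gives \(x\star_M y=0\). Equivalently, by Theorem~\ref{thm:symmetric}, both \((Rx:M)y=0\) and \((Ry:M)x=0\) hold, although either one alone suffices for the definition of \(\star_M\).

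Next comes vertex membership. The relation \(\star_M\) is symmetric in its two arguments, because its defining condition is a disjunction that is invariant under swapping \(x\) and \(y\). So \(y\star_M x=0\) as well. Consider \(x\) first: it is a nonzero homogeneous element, and \(y\) is a nonzero homogeneous element with \(y\neq x\) and \(x\star_M y=0\). By the definition of \(Z_H(M)\), this puts \(x\in Z_H(M)\setminus\{0\}=V(\Gamma_H(M))\). The symmetric argument, using \(x\) as the witness, gives \(y\in V(\Gamma_H(M))\).

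Finally, since \(x\) and \(y\) are distinct vertices with \(x\star_M y=0\), the definition of the edge set of \(\Gamma_H(M)\) gives \(x\leftrightarrow y\). There is no real obstacle here. The only point needing care is to use distinctness of \(x\) and \(y\), which the definition of \(Z_H(M)\) requires of the witness, and which the hypothesis supplies directly.
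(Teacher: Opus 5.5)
Your proof is correct and is exactly the argument the paper leaves implicit: the corollary is stated without a separate proof, as an immediate consequence of Theorem~\ref{thrm:Main-AT} together with the definitions of \(Z_H(M)\) and of adjacency in \(\Gamma_H(M)\). Your remark that \(\star_M\) is symmetric, so that each of \(x\) and \(y\) is the distinct nonzero homogeneous witness placing the other in \(Z_H(M)^*\), is precisely the detail needed to get vertex membership before concluding \(x\leftrightarrow y\).
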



Put \(\displaystyle M^{(1)}=\bigoplus_{\alpha\in{I_1}}S_\alpha\) and \(\displaystyle M^{(\ge2)}=\bigoplus_{\alpha\in{I_{\ge 2}}}E_\alpha\). For \(g\in H\), let \(J_g=\{\alpha\in{I_1}:(S_\alpha)_g\neq0\}\).

\begin{definition}
The multiplicity-one incidence complex is
\[
{\Delta_H^{(1)}}(M) = \{P\subseteq{I_1}:P\subseteq J_g \text{ for some }g\in H\}.
\]
\end{definition}

It is an abstract simplicial complex: \(\varnothing\) belongs to it, and every subset of a member is contained in the same \(J_g\).

\begin{proposition}
\label{proo:4.5}
A nonempty set \(P\subseteq{I_1}\) belongs to \({\Delta_H^{(1)}}(M)\) if and only if \(P=\sigma(x)\) for some nonzero homogeneous \(x\in M\).
\end{proposition}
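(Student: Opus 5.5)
We prove the two implications separately. In both directions the key point is how homogeneity of an element of \(M=\bigoplus_\alpha E_\alpha\) relates to the degrees of its block components.

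\emph{Homogeneous elements give faces.} Let \(x\in M\) be nonzero and homogeneous of degree \(g\), and assume \(\sigma(x)\neq\varnothing\). The decomposition \(M=\bigoplus_{\alpha}E_\alpha\) is a direct sum of graded submodules. Hence each component \(x^{(\alpha)}\) is homogeneous of degree \(g\), being the image of \(x\) under the degree-preserving projection onto \(E_\alpha\). For \(\alpha\in\sigma(x)\) we have \(E_\alpha=S_\alpha\) and \(0\neq x^{(\alpha)}\in(S_\alpha)_g\). Thus \(\alpha\in J_g\), so \(\sigma(x)\subseteq J_g\) and \(\sigma(x)\in\Delta_H^{(1)}(M)\).

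\emph{Faces are realized by homogeneous elements.} Let \(\varnothing\neq P\in\Delta_H^{(1)}(M)\) and choose \(g\in H\) with \(P\subseteq J_g\). For each \(\alpha\in P\), pick \(0\neq u_\alpha\in(S_\alpha)_g=(E_\alpha)_g\). Set \(x=\sum_{\alpha\in P}u_\alpha\), with all other block components equal to zero. Then:
\begin{enumerate}
\item \(x\) is homogeneous of degree \(g\), since \(M_g=\bigoplus_\alpha(E_\alpha)_g\);
\item \(x\neq0\), because \(P\) is nonempty and each \(u_\alpha\neq 0\);
\item \(A(x)=P\subseteq I_1\), so \(\sigma(x)=A(x)\cap I_1=P\).
\end{enumerate}

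\emph{Main subtlety.} The only point needing care is the first direction: a homogeneous element of a direct sum of graded submodules has all its components in the same degree. This holds because \(M_g=\bigoplus_\alpha (E_\alpha)_g\) for graded direct summands, which is immediate from the definitions. The argument needs no further results from the paper beyond the definitions of \(\sigma\), \(A(x)\) and \(J_g\).
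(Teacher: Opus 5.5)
Your proof is correct and follows essentially the same route as the paper: for a face $P\subseteq J_g$ you build the degree-$g$ element with nonzero coordinates exactly on $P$, and conversely you note that the nonzero components of a homogeneous element of degree $g$ lie in $(S_\alpha)_g$, so $\sigma(x)\subseteq J_g$. Your explicit remark that $M_g=\bigoplus_\alpha (E_\alpha)_g$ for graded direct summands spells out a step the paper uses without comment.
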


\begin{proof}
If \(P\in{\Delta_H^{(1)}}(M)\), choose \(g\in H\) with \(P\subseteq J_g\) and nonzero \(x_\alpha\in(S_\alpha)_g\) for \(\alpha\in P\). The element having coordinates \(x_\alpha\) on \(P\) and zero elsewhere is nonzero homogeneuos of degree \(g\), and its obstruction support is exactly \(P\). Conversely, if \(P=\sigma(x)\) for some \(0\neq x\in M_g\), then \(x^{(\alpha)}\in(S_\alpha)_g\setminus\{0\}\) for every \(\alpha\in P\), so \(P\subseteq J_g\) and hence \(P\in{\Delta_H^{(1)}}(M)\).
\end{proof}

For \(P\subseteq{I_1}\), define \({\mathcal{C}}_P=\{0\neq x\in h(M):\sigma(x)=P\}\).

\begin{proposition}
\label{prop:Non-empty}
The class \({\mathcal{C}}_\varnothing\) is nonempty if and only if \({I_{\ge 2}}\neq\varnothing\). In this case \(|{\mathcal{C}}_\varnothing|\ge2\).
\end{proposition}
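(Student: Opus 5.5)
The plan is to prove the two directions separately, using only the definition of \(\sigma\) and the structure of the blocks \(E_\alpha=S_\alpha^{m_\alpha}\).

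Suppose first that \(I_{\ge2}=\varnothing\), so every \(\alpha\) lies in \(I_1\). If \(x\) is a nonzero homogeneous element, some coordinate \(x^{(\alpha)}\) is nonzero. Then \(\alpha\in A(x)\cap I_1=\sigma(x)\), so \(\sigma(x)\neq\varnothing\). Hence \(\mathcal C_\varnothing=\varnothing\).

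Conversely, suppose some \(\beta\in I_{\ge2}\) exists. Since \(S_\beta\neq0\) is graded, it has a nonzero homogeneous element \(u\in(S_\beta)_g\) for some \(g\in H\). Because \(E_\beta=S_\beta^{m_\beta}\) carries the componentwise grading, the following elements of \(E_\beta\subseteq M\) are nonzero homogeneous of degree \(g\):
\[
x_1=(u,0,\ldots,0)\quad\text{and}\quad x_2=(0,u,0,\ldots,0).
\]
The second one exists because \(m_\beta\ge2\). Each \(x_i\) has \(A(x_i)=\{\beta\}\), which is disjoint from \(I_1\). So \(\sigma(x_i)=\varnothing\), and both elements lie in \(\mathcal C_\varnothing\). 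They are distinct because their first coordinates differ, one being \(u\neq0\) and the other \(0\). This gives \(|\mathcal C_\varnothing|\ge2\).

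No real obstacle is expected. The only point needing care is that the vectors with \(u\) placed in different coordinates are homogeneous. This is exactly where the componentwise-grading hypothesis on the repeated block \(E_\beta\) is used. Alternatively, one could take \(x_1=(u,0,\ldots,0)\) and \(x_2=(u,u,0,\ldots,0)\).
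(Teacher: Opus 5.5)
Your proof is correct and follows the paper's argument: the paper also uses \((u,0,\ldots,0)\) and \((0,u,0,\ldots,0)\) in a block with \(m_\beta\ge 2\) to witness \(|\mathcal{C}_\varnothing|\ge 2\). Your other direction is the contrapositive of the paper's observation that a nonzero \(x\) with \(\sigma(x)=\varnothing\) must have a nonzero coordinate in some block of multiplicity at least two.
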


\begin{proof}
If \({I_{\ge 2}}\neq\varnothing\), choose \(\alpha\in{I_{\ge 2}}\) and \(0\neq s\in h(S_\alpha)\). The two elements \((s,0,\ldots,0)\) and \((0,s,0,\ldots,0)\) in \(E_\alpha\) are distinct members of \({\mathcal{C}}_\varnothing\), so \(|{\mathcal{C}}_\varnothing|\ge2\). Conversely, if \(0\neq x\in{\mathcal{C}}_\varnothing\), then \(A(x)\neq\varnothing\) and \(A(x)\cap{I_1}=\varnothing\); hence every \(\alpha\in A(x)\) satisfies \(m_\alpha\ge2\), and therefore \({I_{\ge 2}}\neq\varnothing\).
\end{proof}

For \(P\subseteq{I_1}\), set \({\mathcal{C}}_P^\Gamma={\mathcal{C}}_P\cap V(\Gamma_H(M))\), and write \({\mathcal{S}}_\Gamma(M)=\{P\subseteq{I_1}: {\mathcal{C}}_P^\Gamma\neq\varnothing\}\).\\

Two disjoint vertex sets \(A,B\subseteq V(G)\) are said to be \emph{completely joined} if every vertex of \(A\) is adjacent to every vertex of \(B\).

The resulting obstruction classes have the following substitution structure.

\begin{theorem}
\label{thrm:structure-theorem}
For the represented vertex classes, the following hold:
\begin{enumerate}
\item \({\mathcal{C}}_\varnothing^\Gamma\), when present, induces a complete graph;
\item every \({\mathcal{C}}_P^\Gamma\) with \(P\in{\mathcal{S}}_\Gamma(M)\setminus\{\varnothing\}\) is independent;
\item distinct classes \({\mathcal{C}}_P^\Gamma,{\mathcal{C}}_Q^\Gamma\) are completely joined if and only if \(P\cap Q=\varnothing\);
\item otherwise there are no edges between them.
\end{enumerate}
\end{theorem}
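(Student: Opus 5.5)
Everything here reduces to the adjacency criterion of Theorem~\ref{thrm:Main-AT}. For distinct vertices \(x,y\) of \(\Gamma_H(M)\), adjacency means \(x\star_M y=0\), which by Theorem~\ref{thrm:Main-AT} holds exactly when \(\sigma(x)\cap\sigma(y)=\varnothing\). So the plan is to take each of the four items in turn and translate it into a statement about the supports of the vertices involved.

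For item (1), take distinct \(x,y\in{\mathcal{C}}_\varnothing^\Gamma\). Then \(\sigma(x)=\sigma(y)=\varnothing\), so their intersection is empty and \(x\leftrightarrow y\) by Theorem~\ref{thrm:Main-AT}. Hence the class induces a complete graph. For item (2), take \(P\neq\varnothing\) and distinct \(x,y\in{\mathcal{C}}_P^\Gamma\). Then \(\sigma(x)\cap\sigma(y)=P\neq\varnothing\), so \(x\) and \(y\) are not adjacent, and the class is independent.

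For items (3) and (4), let \(P\neq Q\). The classes are disjoint because \(\sigma\) is a function, so any \(x\in{\mathcal{C}}_P^\Gamma\) and \(y\in{\mathcal{C}}_Q^\Gamma\) are automatically distinct vertices. Their adjacency depends only on whether \(P\cap Q\) is empty, and this condition is the same for every such pair. If \(P\cap Q=\varnothing\), every pair is adjacent, so the classes are completely joined. If \(P\cap Q\neq\varnothing\), no pair is adjacent, so there are no edges between the classes. Both classes are nonempty because \(P,Q\in{\mathcal{S}}_\Gamma(M)\). Therefore ``completely joined'' cannot hold vacuously, and the ``only if'' direction of (3) follows.

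The only point needing care is the distinctness hypothesis in Theorem~\ref{thrm:Main-AT}. This is the main thing to check, though it is minor: it matters only inside a single class, where we restrict to distinct pairs anyway. No other obstacle is expected.
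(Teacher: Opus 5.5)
Your proof is correct and takes essentially the same route as the paper: both apply the adjacency criterion of Theorem~\ref{thrm:Main-AT} to pairs \(x\in{\mathcal{C}}_P^\Gamma\), \(y\in{\mathcal{C}}_Q^\Gamma\) and read off all four items. Your remarks on distinctness, and on the nonemptiness of represented classes (so that ``completely joined'' is not vacuous in the ``only if'' direction), spell out details the paper leaves implicit.
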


\begin{proof}
For \(x\in{\mathcal{C}}_P^\Gamma\) and
\(y\in{\mathcal{C}}_Q^\Gamma\), Theorem~\ref{thrm:Main-AT} gives
\[
x\leftrightarrow y \text{ if and only if } P\cap Q=\varnothing.
\]
Thus \({\mathcal{C}}_\varnothing^\Gamma\) is complete, every \({\mathcal{C}}_P^\Gamma\) with \(P\neq\varnothing\) is independent, and two distinct classes are completely joined precisely when their supports are disjoint.
\end{proof}

Accordingly, let \(\mathcal Q(M)\) denote the graph with vertex set \({\mathcal S}_\Gamma(M)\), in which distinct supports \(P,Q\) are adjacent precisely when \(P\cap Q=\varnothing\). We call \(\mathcal Q(M)\) the \emph{obstruction-support quotient} and regard \(P\) as having weight \(|{\mathcal C}_P^\Gamma|\).

Let \(P,Q\in{\mathcal{S}}_\Gamma(M)\) be distinct. Then the classes \({\mathcal{C}}_P^\Gamma\) and \({\mathcal{C}}_Q^\Gamma\) are completely joined if and only if \(P\cap Q=\varnothing\). Thus the quotient is a weighted disjointness graph: the empty-support class, when present, is replaced by a complete graph, while every nonempty support class is replaced by an independent graph of the same cardinality. Multiplicities \(m_\alpha\ge2\) can affect only the class sizes.

\begin{theorem}
\label{thrm:adjacency}
Suppose that \({I_{\ge 2}}\neq\varnothing\). Then every nonzero homogeneous element of \(M\) is a vertex of \(\Gamma_H(M)\); that is, \(Z_H(M)^*=h(M)\setminus\{0\}\). Moreover, every vertex in \({\mathcal{C}}_\varnothing\) is adjacent to every other vertex of \(\Gamma_H(M)\). In particular, \({\mathcal{C}}_\varnothing\) is a universal clique.
\end{theorem}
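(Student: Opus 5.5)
The plan is to reduce everything to the adjacency criterion of Theorem~\ref{thrm:Main-AT} and Corollary~\ref{cor:Main-AT}, using the elements of \({\mathcal{C}}_\varnothing\), which are available by Proposition~\ref{prop:Non-empty} because \({I_{\ge 2}}\neq\varnothing\).

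First we show that every nonzero homogeneous \(x\) is a vertex. By Proposition~\ref{prop:Non-empty}, \({\mathcal{C}}_\varnothing\) contains at least two distinct elements \(z_1,z_2\). Given \(0\neq x\in h(M)\), at least one of \(z_1,z_2\) differs from \(x\); call it \(z\). Since \(\sigma(z)=\varnothing\), we have \(\sigma(x)\cap\sigma(z)=\varnothing\). Corollary~\ref{cor:Main-AT} then shows that \(x\) and \(z\) are both vertices of \(\Gamma_H(M)\) and \(x\leftrightarrow z\). Hence \(h(M)\setminus\{0\}\subseteq Z_H(M)^*\). The reverse inclusion holds by the definition of \(Z_H(M)\), so \(Z_H(M)^*=h(M)\setminus\{0\}\).

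Next we show that elements of \({\mathcal{C}}_\varnothing\) are universal. Let \(z\in{\mathcal{C}}_\varnothing\) and let \(y\) be any vertex with \(y\neq z\). Then \(\sigma(z)\cap\sigma(y)=\varnothing\cap\sigma(y)=\varnothing\), so Theorem~\ref{thrm:Main-AT} gives \(z\star_M y=0\), that is, \(z\leftrightarrow y\). By the first step \(z\) is itself a vertex, so it is universal. Consequently \({\mathcal{C}}_\varnothing\subseteq V(\Gamma_H(M))\), and the class consists entirely of universal vertices, which are pairwise adjacent. It is therefore a universal clique, consistent with Theorem~\ref{thrm:structure-theorem}(1).

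The only point needing care is the distinctness requirement in Corollary~\ref{cor:Main-AT} when \(x\) itself lies in \({\mathcal{C}}_\varnothing\). This is handled by the guarantee \(|{\mathcal{C}}_\varnothing|\ge2\) in Proposition~\ref{prop:Non-empty}, which ensures a distinct partner \(z\) always exists. I do not expect any step to present a genuine obstacle.
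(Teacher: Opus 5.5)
Your proposal is correct and is essentially the paper's own proof. You use Proposition~\ref{prop:Non-empty} to choose \(u\in\mathcal{C}_\varnothing\) with \(u\neq x\), apply Corollary~\ref{cor:Main-AT} to show that \(x\) is a vertex, and then use Theorem~\ref{thrm:Main-AT} to show that \(\mathcal{C}_\varnothing\) is a universal clique; your remarks on the reverse inclusion and the distinctness requirement only spell out what the paper leaves implicit.
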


\begin{proof}
By Proposition~\ref{prop:Non-empty}, \({\mathcal{C}}_\varnothing\) contains at least two elements. Given \(0\neq x\in h(M)\), choose \(u\in{\mathcal{C}}_\varnothing\) with \(u\neq x\). Since \(\sigma(u)=\varnothing\), Corollary~\ref{cor:Main-AT} gives \(u\leftrightarrow x\), so every nonzero homogeneous element is a vertex. Theorem~\ref{thrm:Main-AT} also shows that every \(u\in{\mathcal{C}}_\varnothing\) is adjacent to every other vertex; hence \({\mathcal{C}}_\varnothing\) is a universal clique.
\end{proof}

\begin{corollary}
\label{cor:vertices}
Suppose that \({I_{\ge 2}}=\varnothing\). Then \(Z_H(M)^*=\{\,0\neq x\in h(M):\sigma(x)\neq{I_1}\,\}\).
\end{corollary}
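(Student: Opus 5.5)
When \(I_{\ge2}=\varnothing\), every block is gr-simple (\(m_\alpha=1\) for all \(\alpha\)). Hence \(I_1=\{1,\ldots,t\}\) and \(\sigma(x)=A(x)\) for every nonzero homogeneous \(x\). By Theorem~\ref{thrm:Main-AT}, a nonzero homogeneous \(x\) is a vertex exactly when there is a homogeneous \(y\neq0\), \(y\neq x\), with \(\sigma(x)\cap\sigma(y)=\varnothing\). So the statement reduces to a purely combinatorial claim: such a \(y\) exists if and only if \(\sigma(x)\neq I_1\).

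For the forward direction, suppose \(x\) is a vertex, with neighbour \(y\). Since \(y\neq0\), we have \(A(y)\neq\varnothing\), so \(\sigma(y)=A(y)\) is a nonempty subset of \(I_1\) disjoint from \(\sigma(x)\). Hence \(\sigma(x)\neq I_1\).

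For the converse, suppose \(\sigma(x)\neq I_1\) and choose \(\beta\in I_1\setminus\sigma(x)\). Pick any nonzero homogeneous \(s\in S_\beta\); one exists because \(S_\beta\neq0\) is graded. Let \(y\in M\) have coordinate \(s\) in block \(\beta\) and zero elsewhere. Then \(y\) is nonzero and homogeneous, and \(\sigma(y)=\{\beta\}\) is disjoint from \(\sigma(x)\). Also \(y\neq x\), since \(x^{(\beta)}=0\) while \(y^{(\beta)}\neq0\). Corollary~\ref{cor:Main-AT} then shows that \(x\) is a vertex.

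No step should be a real obstacle. The only care needed is in the forward direction: we must use \(y\neq0\) together with \(I_{\ge2}=\varnothing\) to get \(\sigma(y)\neq\varnothing\), and it is precisely this that fails when \(I_{\ge2}\neq\varnothing\).
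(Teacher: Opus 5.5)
Your proof is correct and follows the paper's argument. Both directions go through Theorem~\ref{thrm:Main-AT}. If \(I_{\ge2}=\varnothing\), every nonzero homogeneous \(y\) has \(\sigma(y)=A(y)\neq\varnothing\), which rules out a neighbour when \(\sigma(x)=I_1\). Otherwise, an element supported on a single block \(\beta\notin\sigma(x)\) gives the required neighbour. Your explicit check that this \(y\) differs from \(x\) is a small detail the paper leaves implicit.
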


\begin{proof}
Here \({I_1}=\{1,\ldots,t\}\), and every nonzero homogeneous element has nonempty obstruction support. If \(\sigma(x)\neq{I_1}\), choose
\(\alpha\in{I_1}\setminus\sigma(x)\); a nonzero homogeneous element supported on \(S_\alpha\) has support \(\{\alpha\}\) and is adjacent to \(x\). If \(\sigma(x)={I_1}\), then \(\sigma(x)\) meets the support of every nonzero homogeneous element, so \(x\) has no neighbor. The result follows from Theorem~\ref{thrm:Main-AT}.
\end{proof}

\begin{corollary}
Let \(M=S^m\), where \(S\) is gr-simple and \(m\ge2\). Then \(\Gamma_H(M)\) is complete.
\end{corollary}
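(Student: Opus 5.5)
This is a direct specialization of Theorem~\ref{thrm:adjacency} to the case of a single block. We regard \(M=S^m\) as the fixed decomposition with \(t=1\), \(E_1=S^m\) and \(m_1=m\ge2\). The distinct-annihilator hypothesis holds vacuously, and the componentwise grading is the given one. Then \(I_1=\varnothing\) and \(I_{\ge2}=\{1\}\neq\varnothing\).

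The first step is to observe that every nonzero homogeneous \(x\in M\) satisfies \(\sigma(x)=A(x)\cap I_1=\varnothing\). So every nonzero homogeneous element lies in \(\mathcal C_\varnothing\).

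The second step invokes Theorem~\ref{thrm:adjacency}, which applies because \(I_{\ge2}\neq\varnothing\). It gives \(Z_H(M)^*=h(M)\setminus\{0\}\), so the vertex set is exactly \(\mathcal C_\varnothing\). The same theorem says \(\mathcal C_\varnothing\) is a universal clique. Alternatively, Theorem~\ref{thrm:Main-AT} applies directly: for distinct vertices \(x,y\) we have \(\sigma(x)\cap\sigma(y)=\varnothing\cap\varnothing=\varnothing\), hence \(x\leftrightarrow y\). In either form, any two distinct vertices are adjacent, so \(\Gamma_H(M)\) is complete.

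There is no real obstacle here. The only point to check is that \(S^m\) with the componentwise grading is a legitimate instance of the standing setup, so that the earlier results apply with \(t=1\). Proposition~\ref{prop:Non-empty} also shows that the vertex set has at least two elements, so the graph is a genuine complete graph on \(|h(M)\setminus\{0\}|\) vertices.
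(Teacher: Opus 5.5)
Your proposal is correct and follows the paper's own argument. With $t=1$ and $I_1=\varnothing$, every vertex has empty obstruction support, so Theorem~\ref{thrm:adjacency} (or Theorem~\ref{thrm:Main-AT} directly) makes any two distinct vertices adjacent. You also check explicitly that $S^m$ with the componentwise grading fits the standing setup, which is a point the paper leaves implicit.
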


\begin{proof}
This follows from Theorems~\ref{thrm:adjacency} and~\ref{thrm:Main-AT}, since every vertex has empty obstruction support.
\end{proof}

\begin{corollary}
\label{thrm:diam<3}
Suppose that \({I_{\ge 2}}\neq\varnothing\). Then \(\Gamma_H(M)\) is connected and \( \diam\Gamma_H(M)\le2\). More precisely, its diameter is \(1\) when the graph is complete, and \(2\) otherwise.
\end{corollary}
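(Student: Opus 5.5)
The plan is to combine Theorem~\ref{thrm:adjacency} with Theorem~\ref{thrm:Main-AT}. Assume \({I_{\ge 2}}\neq\varnothing\). By Theorem~\ref{thrm:adjacency}, the vertex set of \(\Gamma_H(M)\) is all of \(h(M)\setminus\{0\}\), and the class \({\mathcal{C}}_\varnothing\) is nonempty; in fact it has at least two elements by Proposition~\ref{prop:Non-empty}. Fix some \(u\in{\mathcal{C}}_\varnothing\). Since \(\sigma(u)=\varnothing\), Theorem~\ref{thrm:Main-AT} shows that \(u\) is adjacent to every other vertex.

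Connectivity and the bound \(\diam\le2\) then follow directly. Take distinct vertices \(x,y\). If \(x\leftrightarrow y\), their distance is \(1\). Otherwise, neither \(x\) nor \(y\) lies in \({\mathcal{C}}_\varnothing\), because every element of \({\mathcal{C}}_\varnothing\) is universal. Hence \(u\neq x,y\), and \(x\leftrightarrow u\leftrightarrow y\) is a path of length \(2\). So the graph is connected, its diameter is at most \(2\), and this sharpens Theorem~\ref{thm:prelim-diameter} in the present case.

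For the precise value, note that the graph has at least two vertices, since \(|{\mathcal{C}}_\varnothing|\ge2\). Its diameter is therefore \(1\) exactly when every pair of distinct vertices is adjacent, that is, when the graph is complete. If it is not complete, some pair \(x,y\) is nonadjacent, so \(d(x,y)\ge2\), and the bound above gives \(\diam=2\). We can also describe completeness combinatorially: by Theorem~\ref{thrm:Main-AT}, the graph is non-complete exactly when there are distinct vertices with intersecting obstruction supports. This remark is not needed for the statement.

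No step here is a real obstacle. The only points needing care are that the universal vertex \(u\) used in the two-step path must differ from \(x\) and \(y\), and that the graph has at least two vertices, so that a diameter of \(0\) cannot occur. Both are settled by Proposition~\ref{prop:Non-empty} and by the observation that a nonadjacent pair avoids \({\mathcal{C}}_\varnothing\).
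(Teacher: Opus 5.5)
Your proposal is correct and follows essentially the same route as the paper: a vertex of \({\mathcal{C}}_\varnothing\) is universal by Theorem~\ref{thrm:adjacency}, so any two vertices are at distance at most \(2\), and the exact value of the diameter then follows immediately. Your two extra checks make explicit what the paper calls immediate: that the universal vertex differs from any nonadjacent pair, and that \(|{\mathcal{C}}_\varnothing|\ge2\) excludes a one-vertex graph.
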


\begin{proof}
By Theorem~\ref{thrm:adjacency}, a vertex in \({\mathcal{C}}_\varnothing\) is universal, so every two vertices are at distance at most \(2\). The remaining assertion is immediate.
\end{proof}

\begin{example}
Let \(R=\mathbb Z\) and \(M=\mathbb Z_2\oplus\mathbb Z_2\) be with a trivial grading. There is one simple isomorphism type of multiplicity two, so \({I_1}=\varnothing\). Hence, every nonzero element has empty obstruction support and \(\Gamma_H(M)=K_3\).
\end{example}

\begin{corollary}
Let \(M=S_1\oplus S_2\), where \(S_1\) and \(S_2\) are gr-simple and \(\Ann_R(S_1)\neq\Ann_R(S_2)\). Then \(\Gamma_H(M)\) is complete bipartite.
\end{corollary}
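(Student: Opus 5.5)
Here \(t=2\), \(m_1=m_2=1\), so \({I_1}=\{1,2\}\) and \({I_{\ge 2}}=\varnothing\). Every nonzero homogeneous \(x=(x^{(1)},x^{(2)})\) therefore has \(\sigma(x)=A(x)\), which is one of \(\{1\}\), \(\{2\}\), \(\{1,2\}\). By Corollary~\ref{cor:vertices}, the vertex set of \(\Gamma_H(M)\) consists of the nonzero homogeneous \(x\) with \(\sigma(x)\neq\{1,2\}\). Hence
\[
V(\Gamma_H(M))={\mathcal C}_{\{1\}}\cup{\mathcal C}_{\{2\}},
\]
where \({\mathcal C}_{\{1\}}=h(S_1)\setminus\{0\}\) and \({\mathcal C}_{\{2\}}=h(S_2)\setminus\{0\}\), viewed inside \(M\). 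In particular, elements with both coordinates nonzero are isolated and are excluded from the vertex set.

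Next apply Theorem~\ref{thrm:structure-theorem}, or directly Theorem~\ref{thrm:Main-AT}. Two elements of \({\mathcal C}_{\{1\}}\) have intersecting supports \(\{1\}\cap\{1\}\neq\varnothing\), so they are not adjacent; the same holds within \({\mathcal C}_{\{2\}}\). Any \(x\in{\mathcal C}_{\{1\}}\) and \(y\in{\mathcal C}_{\{2\}}\) have disjoint supports, so they are adjacent. Thus \(\Gamma_H(M)\) is the complete bipartite graph with parts \({\mathcal C}_{\{1\}}\) and \({\mathcal C}_{\{2\}}\), namely \(K_{a,b}\) with \(a=|h(S_1)\setminus\{0\}|\) and \(b=|h(S_2)\setminus\{0\}|\), possibly infinite cardinals.

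The only point needing care is that both parts are nonempty, so that the graph is genuinely complete bipartite rather than edgeless or empty. This holds because each \(S_\alpha\) is a nonzero graded module and so contains a nonzero homogeneous element. The bipartition is then proper, and every vertex in one part has a neighbour in the other, which is consistent with Corollary~\ref{cor:vertices}.
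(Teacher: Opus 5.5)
Your proof is correct and takes the same route as the paper: Corollary~\ref{cor:vertices} restricts the vertex supports to \(\{1\}\) and \(\{2\}\), and Theorem~\ref{thrm:structure-theorem} (equivalently Theorem~\ref{thrm:Main-AT}) then gives the complete bipartite adjacency. You also check explicitly that both parts are nonempty, since each \(S_\alpha\) contains a nonzero homogeneous element, which the paper leaves implicit.
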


\begin{proof}
By Corollary~\ref{cor:vertices}, the only vertex supports are \(\{1\}\) and \(\{2\}\), and Theorem~\ref{thrm:structure-theorem} concludes the result.
\end{proof}

\begin{example}
Let \(R=\mathbb Z\) and \(M=\mathbb Z_2\oplus\mathbb Z_3\) with trivial grading. Both blocks have multiplicity one. Full-support elements have no zero-divisor neighbor and therefore do not belong to \(V(\Gamma_H(M))\). The surviving class sizes are \(|{\mathcal{C}}_{\{1\}}|=1\) and \(|{\mathcal{C}}_{\{2\}}|=2\). Hence \(\Gamma_H(M)=K_{1,2}\).
\end{example}

\begin{example}
Let \(R=\mathbb Z\) and \(M=(\mathbb Z_2\oplus\mathbb Z_2)\oplus\mathbb Z_3\) with trivial grading. The \(\mathbb Z_2\)-type has multiplicity two, while the \(\mathbb Z_3\)-type has multiplicity one, so \({I_1}=\{1\}\). The empty obstruction class has size \(3\), whereas \(|{\mathcal{C}}_{\{1\}}|=8\). Hence, \(\Gamma_H(M)=K_3\vee\overline{K}_8\).
The corresponding decomposition into the two obstruction classes is illustrated in Figure~\ref{fig:mixed}.

\begin{figure}[ht]
\centering
\begin{tikzpicture}[scale=0.9]

\node[vertex] (a1) at (-2,0.5) {};
\node[vertex] (a2) at (-1.5,-0.75) {};
\node[vertex] (a3) at (-2,-2) {};

\node[vertex] (b1) at (2,1.10) {};
\node[vertex] (b2) at (2,0.60) {};
\node[vertex] (b3) at (2,0.10) {};
\node[vertex] (b4) at (2,-0.40) {};
\node[vertex] (b5) at (2,-0.90) {};
\node[vertex] (b6) at (2,-1.40) {};
\node[vertex] (b7) at (2,-1.90) {};
\node[vertex] (b8) at (2,-2.40) {};

\draw[edge] (a1)--(a2);
\draw[edge] (a2)--(a3);
\draw[edge] (a3)--(a1);

\foreach \a in {a1,a2,a3}{
    \foreach \b in {b1,b2,b3,b4,b5,b6,b7,b8}{
        \draw[edge] (\a)--(\b);
    }
}
\node[graphlabel] at (-2.5,-0.75) {$K_3$};
\node[graphlabel] at (2.75,-0.85) {$\overline{K}_8$};

\end{tikzpicture}
\caption{The mixed graph \(K_3\vee\overline{K}_8\).}
\label{fig:mixed}
\end{figure}
\end{example}

\section{Graph Structure and Invariants}
\subsection{Clique number and coloring}

Assume throughout this subsection that \(\Gamma_H(M)\) is finite. Recall that \(s=|{I_1}|\), and let \(u=|{\mathcal{C}}_\varnothing|\) when \({I_{\ge 2}}\neq\varnothing\); otherwise set \(u=0\).

Because cliques correspond to pairwise disjoint nonempty supports, while the empty class is universal, both the clique number and the chromatic number can be read off directly.

\begin{theorem}
\label{thrm:clique=chromatic}
Let \(s=|{I_1}|\), and let \(u=|{\mathcal{C}}_\varnothing|\) when \({I_{\ge 2}}\neq\varnothing\), with \(u=0\) otherwise. If \(M\) consists of a single multiplicity-one simple component, then \(\Gamma_H(M)\) is empty. In every other case,
\[
\omega(\Gamma_H(M)) = \chi(\Gamma_H(M)) = u+s.
\]
\end{theorem}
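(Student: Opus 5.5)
The plan is to prove the two inequalities \(\omega(\Gamma_H(M))\ge u+s\) and \(\chi(\Gamma_H(M))\le u+s\). Combined with the general inequality \(\omega\le\chi\), these give equality. Everything rests on Theorem~\ref{thrm:Main-AT}: distinct vertices \(x,y\) are adjacent exactly when \(\sigma(x)\cap\sigma(y)=\varnothing\).

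First I dispose of the degenerate case. If \(t=1\) and \(m_1=1\), then \({I_{\ge 2}}=\varnothing\) and \({I_1}=\{1\}\). Every nonzero homogeneous \(x\) has \(\sigma(x)={I_1}\), so Corollary~\ref{cor:vertices} gives \(Z_H(M)^*=\varnothing\) and the graph is empty. From now on I assume we are not in this case. Then either \({I_{\ge 2}}\neq\varnothing\), or \({I_{\ge 2}}=\varnothing\) and \(s=t\ge2\).

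\emph{Lower bound.} For each \(\alpha\in{I_1}\), I pick a nonzero homogeneous \(s_\alpha\in S_\alpha\) and view it as an element \(x_\alpha\in M\) supported only on \(E_\alpha\), so \(\sigma(x_\alpha)=\{\alpha\}\). I must check that each \(x_\alpha\) is a vertex; I expect this to be the only delicate point.
\begin{itemize}
\item If \({I_{\ge 2}}\neq\varnothing\), this follows from Theorem~\ref{thrm:adjacency}.
\item If \({I_{\ge 2}}=\varnothing\), then \(s\ge2\), so \(\{\alpha\}\neq{I_1}\), and Corollary~\ref{cor:vertices} applies.
\end{itemize}
The \(x_\alpha\) are pairwise distinct with pairwise disjoint supports, so they are pairwise adjacent. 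When \({I_{\ge 2}}\neq\varnothing\), I add all of \({\mathcal{C}}_\varnothing\), which is a universal clique of size \(u\) by Theorem~\ref{thrm:adjacency}. These classes are disjoint from the \(x_\alpha\) because their supports differ. This gives a clique of size \(u+s\); if \(s=0\), it is just \({\mathcal{C}}_\varnothing\).

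\emph{Upper bound.} I define a proper coloring with \(u+s\) colors.
\begin{itemize}
\item Each of the \(u\) vertices of \({\mathcal{C}}_\varnothing\) receives its own private color. This set of colors is finite because the graph is finite.
\item Each vertex \(x\) with \(\sigma(x)\neq\varnothing\) receives the color \(\min\sigma(x)\in{I_1}\), which gives at most \(s\) further colors, disjoint from the private ones.
\end{itemize}
The coloring is proper. If two distinct vertices share a color \(\alpha\in{I_1}\), then \(\alpha\in\sigma(x)\cap\sigma(y)\), so by Theorem~\ref{thrm:Main-AT} they are not adjacent. Vertices of \({\mathcal{C}}_\varnothing\) never share a color with anything. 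Hence \(\chi\le u+s\le\omega\le\chi\), and equality holds throughout.
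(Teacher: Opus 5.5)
Your proof is correct and is essentially the paper's argument. It uses the same clique, one singleton-support vertex for each \(\alpha\in I_1\) together with the universal class \(\mathcal{C}_\varnothing\), and the same coloring, with private colors on \(\mathcal{C}_\varnothing\) and a color taken from its support for each other vertex (your \(\min\sigma(x)\) is the paper's \(\alpha_P\)). The only differences are that you get \(\omega\le u+s\) from \(\omega\le\chi\le u+s\) instead of the paper's separate clique count, and that you check more explicitly, via Theorem~\ref{thrm:adjacency} or Corollary~\ref{cor:vertices}, that the singleton-support elements are vertices.
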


\begin{proof}
For each \(\alpha\in{I_1}\), the singleton support \(\{\alpha\}\) is realized by a nonzero homogeneous element of \(S_\alpha\). Except in the excluded one-component case, the corresponding vertex class is nonempty. Choosing one vertex from each singleton class, together with all \(u\) vertices of \({\mathcal{C}}_\varnothing\) when this class is present, gives a clique of size \(u+s\).

Conversely, a clique contains at most one vertex from each nonempty obstruction class, and its nonempty supports are pairwise disjoint. Since these are nonempty subsets of the \(s\)-element set \({I_1}\), at most \(s\) of them can occur. The empty-support class contributes at most \(u\) further vertices. Hence
\[
\omega(\Gamma_H(M))=u+s.
\]

For the coloring, use distinct colors on the vertices of \({\mathcal{C}}_\varnothing\), and index \(s\) further colors by \({I_1}\). For each represented nonempty support \(P\), choose \(\alpha_P\in P\) and color every vertex of \({\mathcal{C}}_P^\Gamma\) by \(\alpha_P\). If two such classes are joined, their supports are disjoint, so they receive different colors. Thus
\[
\chi(\Gamma_H(M))\le u+s.
\]
Since \(\omega(\Gamma_H(M))\le\chi(\Gamma_H(M))\), equality follows.
\end{proof}

\begin{corollary}
Suppose that \(M\) is multiplicity-free with \(t\ge2\) simple components, and that \(\Gamma_H(M)\) is finite. Then \(\omega(\Gamma_H(M))=\chi(\Gamma_H(M))=t\).
\end{corollary}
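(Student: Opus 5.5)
This corollary should follow by specializing Theorem~\ref{thrm:clique=chromatic} to the multiplicity-free case. The plan is to identify the two parameters \(u\) and \(s\) in this setting and to check that the excluded one-component case does not occur.

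First, since \(m_\alpha=1\) for every \(\alpha\), we have \({I_1}=\{1,\ldots,t\}\) and \({I_{\ge 2}}=\varnothing\). So \(s=t\) and, by the convention in Theorem~\ref{thrm:clique=chromatic}, \(u=0\). Second, the hypothesis \(t\ge2\) rules out the exceptional case in which \(M\) is a single multiplicity-one simple component. Since \(\Gamma_H(M)\) is assumed finite, Theorem~\ref{thrm:clique=chromatic} then gives \(\omega(\Gamma_H(M))=\chi(\Gamma_H(M))=u+s=t\).

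The only point to confirm is the nonemptiness claim used inside Theorem~\ref{thrm:clique=chromatic}: each singleton class \(\mathcal C_{\{\alpha\}}^\Gamma\) must contain a vertex. I would check this directly. Pick a nonzero homogeneous element \(x_\alpha\in S_\alpha\), viewed in \(M\); its support is \(\sigma(x_\alpha)=\{\alpha\}\). Because \(t\ge2\), we have \(\{\alpha\}\neq{I_1}\), so Corollary~\ref{cor:vertices} shows that \(x_\alpha\) is a vertex. The chosen \(x_\alpha\), one for each \(\alpha\), have pairwise disjoint supports and so form a \(t\)-clique by Theorem~\ref{thrm:Main-AT}. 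Hence no real obstacle remains, and the corollary is a direct specialization.
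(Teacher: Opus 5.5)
Your proposal is correct and follows the paper's route: the corollary is stated without a separate proof, as the specialization of Theorem~\ref{thrm:clique=chromatic} to \(I_{\ge 2}=\varnothing\). In that case \(u=0\) and \(s=t\), and \(t\ge2\) excludes the empty one-component case. Your additional check via Corollary~\ref{cor:vertices} that each singleton class contains a vertex is already part of that theorem's proof, so it is harmless but redundant.
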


The equality in Theorem~\ref{thrm:clique=chromatic} should not be interpreted as saying that \(\Gamma_H(M)\) is perfect. The following example makes this distinction clear.

\begin{example}
Let \(R=\mathbb Z, ~M=\mathbb Z_2\oplus\mathbb Z_3\oplus\mathbb Z_5 \oplus\mathbb Z_7\oplus\mathbb Z_{11}\), with the trivial grading. Each simple component occurs with multiplicity one, so \({I_1}=\{1,2,3,4,5\}\). By Theorem~\ref{thrm:clique=chromatic}, \(\omega(\Gamma_H(M))=\chi(\Gamma_H(M))=5\).

Now consider
\[
\begin{aligned}
x_1&=(1,1,0,0,0),&
x_2&=(0,0,1,1,0),&
x_3&=(1,0,0,0,1),\\
x_4&=(0,1,1,0,0),&
x_5&=(0,0,0,1,1).
\end{aligned}
\]
Their obstruction supports are
\[
\sigma(x_1)=\{1,2\},~
\sigma(x_2)=\{3,4\},~
\sigma(x_3)=\{1,5\},~
\sigma(x_4)=\{2,3\},~
\sigma(x_5)=\{4,5\}.
\]

Consecutive supports in this list are disjoint, with the indices read cyclically. Hence
\(x_1,x_2,x_3,x_4,x_5\) form a \(5\)-cycle. Moreover, every pair of nonconsecutive supports has a common element, so there are no additional edges among these five vertices. Thus they induce a copy of \(C_5\) in \(\Gamma_H(M)\).

Since \(\omega(C_5)=2\) and \(\chi(C_5)=3\), this induced subgraph is not perfect. Therefore \(\Gamma_H(M)\) is not perfect, even though
\(\omega(\Gamma_H(M))=\chi(\Gamma_H(M))\). This shows that the equality established in Theorem~\ref{thrm:clique=chromatic} applies to the graph itself and does not imply perfectness.
\end{example}

\subsection{Metric and girth structure}

\begin{theorem}
\label{thrm:distance}
Assume that \({I_{\ge 2}}=\varnothing\). Let \(x\) and \(y\) be distinct vertices of \(\Gamma_H(M)\), and set \(A=\sigma(x)\) and \(B=\sigma(y)\). Then
\[
d(x,y)=
\begin{cases}
1, & A\cap B=\varnothing,\\
2, & A\cap B\neq\varnothing \text{ and } A\cup B\neq{I_1},\\
3, & A\cap B\neq\varnothing \text{ and } A\cup B={I_1}.
\end{cases}
\]
\end{theorem}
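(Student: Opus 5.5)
Throughout, \(I_{\ge2}=\varnothing\), so \(I_1=\{1,\ldots,t\}\). Every nonzero homogeneous element therefore has nonempty obstruction support. By Corollary~\ref{cor:vertices}, the vertices are exactly the nonzero homogeneous \(x\) with \(\sigma(x)\neq I_1\). The plan is to read off all three distances from Theorem~\ref{thrm:Main-AT}, which says that adjacency of distinct vertices is equivalent to disjointness of their supports. I would treat the three cases separately and argue about common neighbours in terms of supports.

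\emph{Distance \(1\).} By Theorem~\ref{thrm:Main-AT}, \(d(x,y)=1\) if and only if \(A\cap B=\varnothing\). So from now on assume \(A\cap B\neq\varnothing\), which gives \(d(x,y)\ge2\).

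\emph{Distance \(2\).} A common neighbour \(w\) of \(x\) and \(y\) must satisfy \(\sigma(w)\cap(A\cup B)=\varnothing\) with \(\sigma(w)\neq\varnothing\). If \(A\cup B\neq I_1\), choose \(\alpha\in I_1\setminus(A\cup B)\) and a nonzero homogeneous \(z\in S_\alpha\), regarded as an element of \(M\).
\begin{itemize}
\item Its support is \(\sigma(z)=\{\alpha\}\), which differs from \(A\) and \(B\) since \(\alpha\notin A\cup B\). Hence \(z\neq x,y\).
\item Since \(A\neq\varnothing\) and \(\alpha\notin A\), we have \(\{\alpha\}\neq I_1\), so \(z\) is a vertex.
\item The support \(\{\alpha\}\) is disjoint from both \(A\) and \(B\).
\end{itemize}
Theorem~\ref{thrm:Main-AT} then gives the path \(x-z-y\), so \(d(x,y)=2\). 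Conversely, if \(A\cup B=I_1\), no nonempty support avoids \(A\cup B\). Then \(x\) and \(y\) have no common neighbour, and \(d(x,y)\ge3\).

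\emph{Distance \(3\).} For the upper bound in the case \(A\cup B=I_1\), I would invoke Theorem~\ref{thm:prelim-diameter}, which gives connectivity and \(\diam\le3\). Alternatively, I would build an explicit path as follows.
\begin{itemize}
\item Because \(B\neq I_1\) and \(A\cup B=I_1\), we get \(A\not\subseteq B\), so there is \(\alpha\in A\setminus B\). Symmetrically, there is \(\beta\in B\setminus A\).
\item Take nonzero homogeneous \(z_\beta\in S_\beta\) and \(z_\alpha\in S_\alpha\). Their supports \(\{\beta\}\) and \(\{\alpha\}\) are proper subsets of \(I_1\), since \(A\cap B\neq\varnothing\) forces \(|I_1|\ge2\). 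So both are vertices.
\item The supports satisfy \(\{\beta\}\cap A=\varnothing\), \(\{\alpha\}\cap\{\beta\}=\varnothing\), and \(\{\alpha\}\cap B=\varnothing\).
\item These support relations also show that \(x,z_\beta,z_\alpha,y\) are pairwise distinct where needed.
\end{itemize}
This yields the path \(x-z_\beta-z_\alpha-y\), so \(d(x,y)=3\).

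The only delicate point is the bookkeeping in the auxiliary constructions: checking that each auxiliary element is genuinely a vertex and distinct from the endpoints. This is where the hypotheses \(\sigma(x),\sigma(y)\neq I_1\) and the nonemptiness of supports, guaranteed by \(I_{\ge2}=\varnothing\), are used.
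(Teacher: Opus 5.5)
Your proof is correct and follows the paper's argument essentially step for step: disjoint supports give adjacency; a singleton support from \(I_1\setminus(A\cup B)\) gives a common neighbour; \(A\cup B=I_1\) rules out any common neighbour because vertex supports are nonempty; and \(\alpha\in A\setminus B\), \(\beta\in B\setminus A\) give the path \(x-z_\beta-z_\alpha-y\). Your extra checks of distinctness and vertex-hood, and the optional appeal to Theorem~\ref{thm:prelim-diameter}, are fine, though the properness of \(\{\alpha\}\) and \(\{\beta\}\) is better justified by \(\alpha\neq\beta\) than by \(A\cap B\neq\varnothing\) alone.
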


\begin{proof}
Since \({I_{\ge 2}}=\varnothing\), every simple type occurs with multiplicity one. Thus, by Corollary~\ref{cor:vertices}, the obstruction support of a vertex is a nonempty proper subset of \({I_1}\). In particular, \(A\) and \(B\) are nonempty and proper.

If \(A\cap B=\varnothing\), then \(x\) and \(y\) are adjacent by Theorem~\ref{thrm:Main-AT}. Hence \(d(x,y)=1\).

Now suppose that \(A\cap B\neq\varnothing\) and \(A\cup B\neq{I_1}\). Choose \(\alpha\in{I_1}\setminus(A\cup B)\). A nonzero homogeneous element supported only on \(S_\alpha\) gives a vertex \(z\) with \(\sigma(z)=\{\alpha\}\). Since \(\alpha\notin A\cup B\), the support of \(z\) is disjoint from both \(A\) and \(B\). Hence \(x\leftrightarrow z\leftrightarrow y\). On the other hand, \(A\cap B\neq\varnothing\), so \(x\) and \(y\) are not adjacent. Therefore \(d(x,y)=2\).

It remains to consider the case \(A\cap B\neq\varnothing\) and \(A\cup B={I_1}\). There is no common neighbor of \(x\) and \(y\). Indeed, if \(z\) were adjacent to both, then Theorem~\ref{thrm:Main-AT} would give \(\sigma(z)\cap A=\sigma(z)\cap B=\varnothing\). Hence \(\sigma(z)\cap(A\cup B)=\varnothing\), and so \(\sigma(z)=\varnothing\), contrary to the fact that every vertex has nonempty obstruction support. Thus \(d(x,y)>2\).

Since \(A\) and \(B\) are proper subsets of \({I_1}\) and \(A\cup B={I_1}\), both \(A\setminus B\) and \(B\setminus A\) are nonempty. Choose \(\alpha\in A\setminus B\) and \(\beta\in B\setminus A\). Take vertices \(u\) and \(v\) with \(\sigma(u)=\{\beta\}\) and \(\sigma(v)=\{\alpha\}\). Then \(A\cap\sigma(u)=\varnothing\), \(\sigma(u)\cap\sigma(v)=\varnothing\), and \(\sigma(v)\cap B=\varnothing\). Hence \(x-u-v-y\) is a path of length \(3\). Consequently \(d(x,y)=3\).
\end{proof}

\begin{theorem}
\label{thrm:girth}
Assume that \(M\neq0\), the following hold.
\begin{enumerate}
\item If \({I_{\ge 2}}\neq\varnothing\), then \( g(\Gamma_H(M))=3. \)

\item If \({I_{\ge 2}}=\varnothing\) and \(s\ge3\), then \( g(\Gamma_H(M))=3. \)

\item If \({I_{\ge 2}}=\varnothing\) and \(s=2\), then \( \Gamma_H(M)\cong K_{a,b}, \) where \(a=|{\mathcal{C}}_{\{1\}}| \text{ and } b=|{\mathcal{C}}_{\{2\}}|. \) Consequently,
\[
g(\Gamma_H(M))=
\begin{cases}
4, & a,b\ge2,\\
\infty, & \text{otherwise}.
\end{cases}
\]

\item If \({I_{\ge 2}}=\varnothing\) and \(s=1\), then \(\Gamma_H(M)\) is empty.
\end{enumerate}
\end{theorem}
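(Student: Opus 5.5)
The plan is to handle the four cases separately. In each case we exhibit a short cycle directly using the adjacency criterion of Theorem~\ref{thrm:Main-AT}, or we identify the graph exactly using Theorem~\ref{thrm:structure-theorem} and Corollary~\ref{cor:vertices}. The upper bound \(g\le 4\) from Theorem~\ref{thm:prelim-girth} is not really needed, since we produce the cycles explicitly.

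For (1), take \(\alpha\in I_{\ge2}\) and \(0\neq s_0\in h(S_\alpha)\). Proposition~\ref{prop:Non-empty} gives two distinct elements \(u_1=(s_0,0,\dots,0)\) and \(u_2=(0,s_0,0,\dots,0)\) of \(\mathcal C_\varnothing\). Their sum \(u_3=(s_0,s_0,0,\dots,0)\) is homogeneous of the same degree, nonzero, and distinct from both, and \(\sigma(u_3)=\varnothing\). By Theorem~\ref{thrm:adjacency} all three are vertices forming a clique, so \(g=3\).

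For (2), \(I_1=\{1,\dots,t\}\) with \(t=s\ge3\). Pick nonzero homogeneous \(x_i\in S_i\) for \(i=1,2,3\), viewed in \(M\). Their supports \(\{1\},\{2\},\{3\}\) are pairwise disjoint and proper, since \(s\ge3>1\). By Corollary~\ref{cor:Main-AT} they are pairwise adjacent vertices, giving a triangle.

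For (3), Corollary~\ref{cor:vertices} shows that the vertices are exactly the nonzero homogeneous elements with support \(\{1\}\) or \(\{2\}\), that is, \(\mathcal C_{\{1\}}\cup\mathcal C_{\{2\}}\), and all of these classes are vertices. By Theorem~\ref{thrm:structure-theorem} each class is independent and the two classes are completely joined, so \(\Gamma_H(M)\cong K_{a,b}\). Both \(a,b\ge1\) because each \(S_i\) has a nonzero homogeneous element. The girth statement is then the standard fact about complete bipartite graphs. If \(a,b\ge2\) there is a \(4\)-cycle and no triangle. Otherwise the graph is a star, which is acyclic, so \(g=\infty\); this holds even when \(a\) or \(b\) is infinite.

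For (4), \(M=S_1\) and every nonzero homogeneous \(x\) has \(\sigma(x)=\{1\}=I_1\). Corollary~\ref{cor:vertices} then gives no vertices, so the graph is empty.

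The only point needing care is (1): we must verify that \(u_3\) is homogeneous and distinct from \(u_1\) and \(u_2\), and this uses the componentwise grading of \(E_\alpha\). Everything else is a direct application of earlier results.
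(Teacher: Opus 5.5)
Your proof is correct and follows the paper's argument case by case. You use a triangle inside \(\mathcal{C}_\varnothing\) for (1), three singleton supports for (2), Corollary~\ref{cor:vertices} together with Theorem~\ref{thrm:structure-theorem} to get \(K_{a,b}\) for (3), and Corollary~\ref{cor:vertices} for (4); your explicit third vertex \((s_0,s_0,0,\dots,0)\) in (1) fills in a detail the paper only asserts.
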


\begin{proof} We consider the four cases separately.
\begin{enumerate}
\item A repeated block contains three distinct homogeneous vertices with empty obstruction support, which form a triangle by Theorem~\ref{thrm:Main-AT}.
\item Three distinct singleton supports are pairwise disjoint, so Corollary~\ref{cor:Main-AT} gives a triangle.
\item If \(s=2\), Corollary~\ref{cor:vertices} shows that the only vertex supports are \(\{1\}\) and \(\{2\}\). Theorem~\ref{thrm:structure-theorem} therefore gives \(\Gamma_H(M)\cong K_{a,b}\). If \(a,b\ge2\), the graph has a \(4\)-cycle and no shorter cycle; otherwise it is a star and hence acyclic.
\item If \(s=1\), Corollary~\ref{cor:vertices} gives \(V(\Gamma_H(M))=\varnothing\).
\end{enumerate}
\end{proof}

\begin{example}
Let \(M=\mathbb Z_2\oplus\mathbb Z_3\oplus\mathbb Z_5\) with trivial grading. The nonempty obstruction supports represented by vertices are \(\{1\},\{2\},\{3\},\{1,2\},\{1,3\},\{2,3\}\).

At the quotient level the three singleton supports form a triangle. A two-element support has only one disjoint nonempty support, namely its complementary singleton.

\begin{figure}[ht]
\centering
\begin{tikzpicture}[scale=1.0]

\node[vertex] (s1) at (0,1.3) {};
\node[vertex] (s2) at (-1.3,-0.7) {};
\node[vertex] (s3) at (1.3,-0.7) {};

\node[vertex] (p23) at (0,2.45) {};
\node[vertex] (p13) at (-2.35,-1.45) {};
\node[vertex] (p12) at (2.35,-1.45) {};

\draw[edge] (s1)--(s2);
\draw[edge] (s2)--(s3);
\draw[edge] (s3)--(s1);

\draw[edge] (s1)--(p23);
\draw[edge] (s2)--(p13);
\draw[edge] (s3)--(p12);

\node[graphlabel,right=3pt of s1] {$\{1\}$};
\node[graphlabel,above left=1pt of s2] {$\{2\}$};
\node[graphlabel,above right=1pt of s3] {$\{3\}$};

\node[graphlabel,above=3pt of p23] {$\{2,3\}$};
\node[graphlabel,left=3pt of p13] {$\{1,3\}$};
\node[graphlabel,right=3pt of p12] {$\{1,2\}$};

\end{tikzpicture}
\caption{The obstruction-support quotient for three multiplicity-one types.}
\label{fig:three}
\end{figure}

In particular, \(\omega(\Gamma_H(M))=\chi(\Gamma_H(M))=3\) and the girth is \(3\).
\end{example}

\subsection{Reconstruction from twin classes}

Two distinct vertices \(u,v\in V(G)\) are called \emph{false twins} if \(N_G(u)=N_G(v)\), and \emph{true twins} if \(N_G[u]=N_G[v]\).

\begin{proposition}
Among vertices with nonempty obstruction support,
\[
N_{\Gamma_H(M)}(x)=N_{\Gamma_H(M)}(y) \text{ if and only if } \sigma(x)=\sigma(y).
\]
When the empty-support class is present, its vertices are true twins.
\end{proposition}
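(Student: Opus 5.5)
By Theorem~\ref{thrm:Main-AT}, for any vertex \(x\) we have
\[
N_{\Gamma_H(M)}(x)=\{z\in V(\Gamma_H(M)):z\neq x,\ \sigma(z)\cap\sigma(x)=\varnothing\}.
\]
Every neighborhood is therefore determined by the support \(\sigma(x)\), apart from the exclusion of \(x\) itself. The plan is to prove the two directions of the equivalence separately and then handle the empty-support class.

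\emph{Sufficiency.} Suppose \(\sigma(x)=\sigma(y)=P\neq\varnothing\) with \(x\neq y\). Then \(x\) and \(y\) are not adjacent, since \(P\cap P\neq\varnothing\); thus \(x\notin N(y)\) and \(y\notin N(x)\). Both neighborhoods then equal the set of vertices \(z\) with \(\sigma(z)\cap P=\varnothing\), so they coincide. (If \(x=y\) there is nothing to prove.)

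\emph{Necessity.} Suppose \(N(x)=N(y)\) but \(\sigma(x)\neq\sigma(y)\), both nonempty. Without loss of generality choose \(\alpha\in\sigma(x)\setminus\sigma(y)\). The aim is a vertex \(z\) with \(\sigma(z)=\{\alpha\}\), since such a \(z\) is adjacent to \(y\) but not to \(x\). Take a nonzero homogeneous element \(z\in S_\alpha=E_\alpha\); then \(\sigma(z)=\{\alpha\}\).

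The main obstacle is checking that \(z\) is a vertex distinct from \(y\). It is distinct because \(\alpha\notin\sigma(y)\). It is a vertex because \(y\neq z\) and \(\sigma(y)\cap\{\alpha\}=\varnothing\), so Corollary~\ref{cor:Main-AT} makes \(z\) a vertex adjacent to \(y\). Since \(\alpha\in\sigma(x)\), \(z\) is not adjacent to \(x\); this holds whether or not \(z=x\), as \(x\notin N(x)\). Hence \(z\in N(y)\setminus N(x)\), a contradiction. The argument uses only that \(y\) is a vertex and never needs \(\sigma(y)\neq I_1\).

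\emph{Empty-support class.} For distinct \(u,v\in\mathcal C_\varnothing\), Theorem~\ref{thrm:adjacency} says both are universal. Hence \(N[u]=N[v]=V(\Gamma_H(M))\), so \(u\) and \(v\) are true twins.
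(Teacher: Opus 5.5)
Your proposal is correct and follows the paper's proof: equal nonempty supports give equal open neighborhoods by Theorem~\ref{thrm:Main-AT}, a vertex $z$ with $\sigma(z)=\{\alpha\}$ for $\alpha\in\sigma(x)\setminus\sigma(y)$ separates the neighborhoods otherwise, and Theorem~\ref{thrm:adjacency} handles the empty-support class. You also check details the paper leaves implicit: that $z$ is a vertex distinct from $y$ (via Corollary~\ref{cor:Main-AT}), and that the case $z=x$ is harmless.
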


\begin{proof}
If \(\sigma(x)=\sigma(y)\neq\varnothing\), then \(x\) and \(y\) are nonadjacent and, by Theorem~\ref{thrm:Main-AT}, have the same neighbors; hence their open neighborhoods are equal.

Conversely, if \(\sigma(x)\neq\sigma(y)\), choose \(\alpha\in\sigma(x)\setminus\sigma(y)\) after interchanging \(x\) and \(y\) if necessary, and let \(z\) have support \(\{\alpha\}\). Then \(z\) is adjacent to \(y\) but not to \(x\), so their neighborhoods differ.

If \(\sigma(x)=\sigma(y)=\varnothing\), Theorem~\ref{thrm:adjacency} makes both vertices universal, hence true twins.
\end{proof}

Thus, the nonempty obstruction classes are exactly the false-twin classes, except that universal vertices with nonempty support require separate consideration.

\begin{proposition}
\label{prop:5.8}
Let \(P\in{\mathcal{S}}_\Gamma(M)\setminus\{\varnothing\}\). The class \({\mathcal{C}}_P^\Gamma\) contains a universal vertex if and only if there is an \(\alpha\in{I_1}\) such that
\[
P=\{\alpha\},~|{\mathcal{C}}_P^\Gamma|=1,
\]
and \(\alpha\notin Q\) for every \(Q\in{\mathcal{S}}_\Gamma(M)\setminus\{P\}\). In that case the unique vertex of \({\mathcal{C}}_P^\Gamma\) is universal.
\end{proposition}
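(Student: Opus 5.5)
The plan is to work entirely through Theorem~\ref{thrm:structure-theorem}, which gives the full adjacency pattern between classes. Fix \(P\in{\mathcal{S}}_\Gamma(M)\setminus\{\varnothing\}\) and a vertex \(x\in{\mathcal{C}}_P^\Gamma\). By that theorem, \(x\) is adjacent to another vertex \(y\) exactly when \(\sigma(y)\cap P=\varnothing\). Hence \(x\) is universal if and only if two things hold. First, there is no other vertex in \({\mathcal{C}}_P^\Gamma\), since that class is independent. Second, every \(Q\in{\mathcal{S}}_\Gamma(M)\setminus\{P\}\) satisfies \(Q\cap P=\varnothing\). I will prove each direction by turning these two conditions into the stated ones.

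For the forward direction, suppose \(x\in{\mathcal{C}}_P^\Gamma\) is universal. Independence of \({\mathcal{C}}_P^\Gamma\) forces \(|{\mathcal{C}}_P^\Gamma|=1\). The main step is to show that \(P\) is a singleton. Suppose instead that \(|P|\ge2\), and pick \(\alpha\in P\). Let \(z\) be a nonzero homogeneous element of \(S_\alpha\) viewed in \(M\), so \(\sigma(z)=\{\alpha\}\neq P\), and in particular \(z\neq x\). I need \(z\) to be a vertex. If \({I_{\ge 2}}\neq\varnothing\), this is Theorem~\ref{thrm:adjacency}. If \({I_{\ge 2}}=\varnothing\), Corollary~\ref{cor:vertices} applies: \(\{\alpha\}\neq{I_1}\) because \(|{I_1}|\ge|P|\ge2\). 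So \(z\) is a vertex with \(\sigma(z)\cap P\neq\varnothing\), which means \(z\) is not adjacent to \(x\), a contradiction. Therefore \(P=\{\alpha\}\). Universality then gives \(Q\cap\{\alpha\}=\varnothing\), that is \(\alpha\notin Q\), for every other represented support \(Q\).

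For the converse, suppose \(P=\{\alpha\}\), \({\mathcal{C}}_P^\Gamma=\{x\}\), and \(\alpha\notin Q\) for all \(Q\in{\mathcal{S}}_\Gamma(M)\setminus\{P\}\). Any vertex \(y\neq x\) lies in some class \({\mathcal{C}}_Q^\Gamma\) with \(Q\neq P\), because the class of \(P\) is just \(\{x\}\). Then \(Q\cap P=\varnothing\), so \(y\leftrightarrow x\) by Theorem~\ref{thrm:Main-AT}. Hence \(x\) is universal and is the unique vertex of its class, which gives the final sentence of the statement.

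The only delicate step is guaranteeing that the witness \(z\) in the forward direction is actually a vertex and is distinct from \(x\). This is handled by splitting on whether \({I_{\ge 2}}\) is empty and by using \(|P|\ge2\), which rules out the degenerate case \({I_1}=\{\alpha\}\).
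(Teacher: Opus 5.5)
Your proof is correct and follows the paper's argument essentially step for step. Independence of \({\mathcal{C}}_P^\Gamma\) forces it to be a single vertex, a singleton-support witness \(z\) with \(\sigma(z)=\{\alpha\}\subseteq P\) rules out \(|P|\ge2\), and Theorem~\ref{thrm:structure-theorem} gives both the condition \(\alpha\notin Q\) and the converse. The only difference is how \(z\) is shown to be a vertex: the paper takes two distinct \(\alpha,\beta\in P\) and applies Corollary~\ref{cor:Main-AT} to the disjoint supports \(\{\alpha\},\{\beta\}\). This avoids your case split on whether \({I_{\ge 2}}\) is empty.
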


\begin{proof}
Suppose that \(x\in{\mathcal{C}}_P^\Gamma\) is universal. Since \(P\neq\varnothing\), the class \({\mathcal{C}}_P^\Gamma\) is independent by Theorem~\ref{thrm:structure-theorem}. Hence it cannot contain a second vertex, and so \(|{\mathcal{C}}_P^\Gamma|=1\).

We claim that \(P\) is a singleton. If \(|P|\ge2\), choose distinct \(\alpha,\beta\in P\), together with nonzero homogeneous elements supported only on \(S_\alpha\) and \(S_\beta\). Their obstruction supports are \(\{\alpha\}\) and \(\{\beta\}\), so Corollary~\ref{cor:Main-AT} shows that both are vertices. In particular, \({\mathcal{C}}_{\{\alpha\}}^\Gamma\neq\varnothing\). Since \(\{\alpha\}\cap P\neq\varnothing\), no vertex of \({\mathcal{C}}_{\{\alpha\}}^\Gamma\) is adjacent to \(x\), a contradiction. Thus \(P=\{\alpha\}\) for some \(\alpha\in{I_1}\).

If \(Q\in{\mathcal{S}}_\Gamma(M)\setminus\{P\}\) contained \(\alpha\), then Theorem~\ref{thrm:structure-theorem} would give no edges between \({\mathcal{C}}_P^\Gamma\) and \({\mathcal{C}}_Q^\Gamma\), again contradicting the universality of \(x\). Hence \(\alpha\notin Q\) for every such \(Q\).

Conversely, suppose that the stated conditions hold, and let \(x\) be the unique vertex of \({\mathcal{C}}_{\{\alpha\}}^\Gamma\). There is no other vertex in its own class. Every other represented class has support disjoint from \(\{\alpha\}\), so Theorem~\ref{thrm:structure-theorem} shows that \(x\) is adjacent to every other vertex of \(\Gamma_H(M)\). Thus \(x\) is universal.
\end{proof}

Thus the abstract graph always recovers all nonuniversal nonempty obstruction classes, their weights, and their mutual disjointness relation. The unresolved information is confined to universal vertices: a universal vertex may come either from the empty-support class or from a singleton class described in Proposition~\ref{prop:5.8}.

Let \(U(G)\) denote the set of universal vertices of \(G\). To remove this ambiguity in the full reconstruction theorem, we assume
\begin{equation}
\label{eq:(U)}
U(\Gamma_H(M))={\mathcal{C}}_\varnothing^\Gamma.
\tag{U}
\end{equation}
\begin{theorem}[Graph Reconstruction]
\label{thrm:graph-reconstruction}
Assume condition~\eqref{eq:(U)}, and exclude the empty graph arising from a single multiplicity-one gr-simple module. Then the abstract graph \(\Gamma_H(M)\) determines, up to a permutation of \({I_1}\),
\begin{enumerate}
\item the family \({\mathcal{S}}_\Gamma(M)\) of obstruction supports represented by vertices;
\item the cardinality of each represented obstruction class;
\item the disjointness relation on \({\mathcal{S}}_\Gamma(M)\);
\item the number \(s=|{I_1}|\);
\item whether the empty-support class is present and, when it is present, its cardinality.
\end{enumerate}
\end{theorem}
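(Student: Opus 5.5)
The plan is to read each piece of data off the abstract graph in a fixed order. First we split off the universal vertices. Next the remaining vertices fall into false-twin classes. The singleton supports are then recognised intrinsically. Finally every other support is recovered as a set of singleton classes. The labelling by singletons is the source of the ambiguity ``up to a permutation of \({I_1}\)''.

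\emph{Step 1 (empty-support class).} By condition~\eqref{eq:(U)}, the set \(U(\Gamma_H(M))\) of universal vertices equals \({\mathcal{C}}_\varnothing^\Gamma\). Since \(U(G)\) is defined purely graph-theoretically, this gives item (5) directly. The class is present if and only if \(U\neq\varnothing\), and its cardinality is \(|U|\).

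\emph{Step 2 (nonempty classes and their weights).} Let \(G'=\Gamma_H(M)-U\). Every vertex of \(U\) is adjacent to everything, so for \(x\in V(G')\) we have \(N_{\Gamma_H(M)}(x)=N_{G'}(x)\cup U\). Hence equality of open neighbourhoods can be tested in either graph. By the twin proposition above, among vertices with nonempty support, \(N(x)=N(y)\) holds if and only if \(\sigma(x)=\sigma(y)\). So the false-twin classes of \(G'\) are exactly the classes \({\mathcal{C}}_P^\Gamma\) with \(\varnothing\ne P\in{\mathcal{S}}_\Gamma(M)\). This recovers these classes and their cardinalities, which is item (2) for nonempty \(P\). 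Theorem~\ref{thrm:structure-theorem} says two classes are completely joined or have no edges between them. Therefore adjacency between twin classes recovers the disjointness relation, which is item (3), once the classes are labelled.

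\emph{Step 3 (recognising singletons; the main obstacle).} We must identify, without any labels, which twin classes have support \(\{\alpha\}\). For each \(\alpha\in{I_1}\), the class \({\mathcal{C}}_{\{\alpha\}}^\Gamma\) is nonempty outside the excluded one-component case. This follows from Corollary~\ref{cor:Main-AT}, either via another singleton or via \({\mathcal{C}}_\varnothing\) when \({I_{\ge 2}}\neq\varnothing\).

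For a class \(P\), write \(\mathcal N(P)\) for the set of twin classes completely joined to it. The claim is that the singleton classes are exactly those whose \(\mathcal N(P)\) is inclusion-maximal. The argument has three parts.
\begin{enumerate}
\item If \(\alpha\in P\), then every \(Q\) disjoint from \(P\) is disjoint from \(\{\alpha\}\), so \(\mathcal N(P)\subseteq\mathcal N(\{\alpha\})\).
\item If \(P\) contains a second index \(\beta\neq\alpha\), the inclusion is strict, because \(\{\beta\}\in\mathcal N(\{\alpha\})\setminus\mathcal N(P)\).
\item Conversely, suppose \(\mathcal N(\{\alpha\})\subseteq\mathcal N(Q)\). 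Then \(\{\beta\}\in\mathcal N(Q)\) for every \(\beta\ne\alpha\), so \(Q\subseteq\{\alpha\}\) and hence \(Q=\{\alpha\}\).
\end{enumerate}
Part (3) shows that the neighbourhoods of singletons are maximal and pairwise distinct. The degenerate small cases, such as \(s=1\) with \({I_{\ge 2}}\ne\varnothing\), must be checked separately. In that case there is only one nonempty class, and it is automatically a singleton.

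\emph{Step 4 (supports and \(s\)).} The number of singleton classes is \(s\), which gives item (4). Labelling these classes by \({I_1}\) in an arbitrary order fixes the permutation. For any nonempty twin class \(C\), its support is recovered as the set of singleton classes not completely joined to \(C\). This works because \(\{\alpha\}\) fails to be joined to \(P\) exactly when \(\alpha\in P\); a singleton class is counted as not joined to itself. This gives item (1), and together with Steps 1 and 2 it completes items (2) and (3).
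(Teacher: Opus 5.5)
Your proof is correct. Steps 1, 2 and 4 coincide with the paper's argument: by (U) the universal vertices give \(\mathcal C_\varnothing^\Gamma\); equal-neighbourhood classes of the nonuniversal vertices give the nonempty classes and their weights; complete joins give disjointness; and \(\alpha\in P\) is read off as ``the class of \(P\) equals the class of \(\{\alpha\}\) or is not joined to it''.

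The genuine difference is Step 3, the recognition of the singleton classes.

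\begin{itemize}
\item The paper does this globally. In the quotient \(\mathcal Q_\Gamma\) the singleton classes form a clique of size \(s\). Any clique consists of pairwise disjoint nonempty subsets of \(I_1\), so it has at most \(s\) members, and a clique of size \(s\) must consist of singletons. Hence \(s=\omega(\mathcal Q_\Gamma)\), and the singleton classes are exactly the unique maximum clique.
\item You instead give a local, class-by-class test using the domination order on quotient neighbourhoods. You show \(\mathcal N(P)\subseteq\mathcal N(\{\alpha\})\) whenever \(\alpha\in P\), strictly when \(|P|\ge 2\), and that \(\mathcal N(\{\alpha\})\) is contained in no other \(\mathcal N(Q)\). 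Then \(s\) is the number of undominated classes.
\end{itemize}

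Both routes rest on the same input: every singleton support is represented outside the excluded case. You justify this explicitly; the paper only asserts it at this point.

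The paper's version is shorter and ties \(s\) to a clique number, in line with \(\omega(\Gamma_H(M))=u+s\). Yours avoids optimising over cliques, and it shows that the neighbourhood order itself encodes membership \(\alpha\in P\), which is essentially your Step 4 in another form.

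Your separate check of \(s=1\) is harmless but unnecessary. There no non-singleton support exists, and part (3) of your Step 3 still gives \(Q\subseteq\{\alpha\}\) because \(Q\subseteq I_1=\{\alpha\}\).
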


\begin{proof}
Write \(\Gamma=\Gamma_H(M)\). By condition~\eqref{eq:(U)},
\({\mathcal{C}}_\varnothing^\Gamma=U(\Gamma)\), so the graph determines whether the empty-support class occurs and, if it does, its cardinality.

On the nonuniversal vertices define
\[
x\sim y \text{ if and only if } N_\Gamma(x)=N_\Gamma(y).
\]
The adjacency criterion shows that the \(\sim\)-classes are precisely the represented nonempty obstruction classes. Therefore, their cardinalities give the corresponding class weights.

Let \(\mathcal Q_\Gamma\) have these classes as vertices, with two distinct classes adjacent when they are completely joined in \(\Gamma\). Then \(\mathcal Q_\Gamma\) determines the disjointness relation since according to Theorem~\ref{thrm:structure-theorem}, the classes corresponding to \(P\) and \(Q\) are adjacent exactly when \(P\cap Q=\varnothing\). 

If \(\mathcal Q_\Gamma\) is empty, then \(s=0\); otherwise a singleton support would give a nonuniversal class, apart from the excluded one-simple-module case. Suppose therefore that \(\mathcal Q_\Gamma\neq\varnothing\). The \(s\) singleton supports give a clique of size \(s\). Conversely, every clique corresponds to pairwise disjoint nonempty subsets of the \(s\)-element set \({I_1}\), and hence has at most \(s\) vertices. Thus, \(\omega(\mathcal Q_\Gamma)=s\). A clique of size \(s\) must consist of singleton supports, so these classes are determined up to a permutation of \({I_1}\).

For a represented nonempty support \(P\) and \(\alpha\in{I_1}\),
\[
\alpha\in P \text{ if and only if } {\mathcal{C}}_P^\Gamma={\mathcal{C}}_{\{\alpha\}}^\Gamma
 \text{ or } {\mathcal{C}}_P^\Gamma \text{ is not adjacent to } {\mathcal{C}}_{\{\alpha\}}^\Gamma \text{ in }\mathcal Q_\Gamma.
\]
For distinct classes this means \(P\cap\{\alpha\}\neq\varnothing\). Hence all represented supports, their weights and their disjointness relation are recovered, together with \(s\) and the empty-support class, up to relabeling of \({I_1}\).
\end{proof}

The following example shows why condition~\eqref{eq:(U)} is needed in the reconstruction theorem.

\begin{example}
\label{ex:condition-u}
Let \(k=\mathbb F_2\). First take \(M=k^2\) with the trivial grading. Writing \(M=S^2\) with \(S=k\), we have \(s=0\). The three nonzero elements have empty obstruction support and are pairwise adjacent, so \(\Gamma_H(M)\cong K_3\).

Now let \(R=k\times k\times k\) with the trivial \(C_3\)-grading, and let \(S_1,S_2,S_3\) be the three simple coordinate \(R\)-modules, graded in three distinct degrees of \(C_3\). Put \(M'=S_1\oplus S_2\oplus S_3\). The only nonzero homogeneous elements of \(M'\) are those supported on a single summand. Their obstruction supports are \(\{1\}\), \(\{2\}\), and \(\{3\}\), so they are pairwise adjacent. Hence, \(\Gamma_H(M')\cong K_3\).

Thus \(\Gamma_H(M)\cong\Gamma_H(M')\), but \(s=0\) for \(M\) and \(s=3\) for \(M'\). In the second case, the universal vertices have nonempty obstruction support. This is the ambiguity excluded by condition~\eqref{eq:(U)}.
\end{example}

\begin{corollary}
\label{cor:incidence-recovery}
Assume condition~\eqref{eq:(U)}. Then the following hold.
\begin{enumerate}
\item If \({I_{\ge 2}}\neq\varnothing\), then \(\Gamma_H(M)\) determines \({\Delta_H^{(1)}}(M)\) completely, up to a permutation of \({I_1}\).

\item If \({I_{\ge 2}}=\varnothing\) and \(s\ge2\), then \(\Gamma_H(M)\) determines every proper nonempty face of \({\Delta_H^{(1)}}(M)\), up to relabeling. In general, the graph does not determine whether the full face \({I_1}\) belongs to \({\Delta_H^{(1)}}(M)\).
\end{enumerate}
\end{corollary}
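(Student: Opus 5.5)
The argument rests on Theorem~\ref{thrm:graph-reconstruction} and Proposition~\ref{proo:4.5}. Under condition~\eqref{eq:(U)}, Theorem~\ref{thrm:graph-reconstruction} already recovers from the abstract graph, up to a permutation of \({I_1}\), the family \({\mathcal{S}}_\Gamma(M)\) of supports represented by vertices. By Proposition~\ref{proo:4.5}, the nonempty faces of \({\Delta_H^{(1)}}(M)\) are exactly the sets \(\sigma(x)\) with \(x\) a nonzero homogeneous element. The empty face is always present. The task is therefore to compare \({\mathcal{S}}_\Gamma(M)\setminus\{\varnothing\}\) with the set of all supports realized by nonzero homogeneous elements, that is, to determine which realized supports fail to be vertices.

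For part (1), assume \({I_{\ge 2}}\neq\varnothing\). Theorem~\ref{thrm:adjacency} shows that every nonzero homogeneous element is a vertex. Hence every realized nonempty support lies in \({\mathcal{S}}_\Gamma(M)\), and so
\[
{\Delta_H^{(1)}}(M)=\{\varnothing\}\cup\bigl({\mathcal{S}}_\Gamma(M)\setminus\{\varnothing\}\bigr).
\]
The right-hand side is determined by the graph up to relabeling by Theorem~\ref{thrm:graph-reconstruction}. For this to be meaningful, the relabeling must be a permutation of \({I_1}\) applied uniformly to all supports, and the reconstruction theorem provides exactly that.

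For part (2), assume \({I_{\ge 2}}=\varnothing\) and \(s\ge2\). Corollary~\ref{cor:vertices} says the vertices are precisely the nonzero homogeneous \(x\) with \(\sigma(x)\neq{I_1}\). So every realized proper nonempty support is represented by a vertex and is recovered, and conversely every represented support is a proper face. Thus the proper nonempty faces are exactly \({\mathcal{S}}_\Gamma(M)\), which the graph determines up to relabeling.

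For the negative statement, I will exhibit two modules with isomorphic graphs that differ only in whether \({I_1}\) is a face. Take \(R=k\times k\) with \(k=\mathbb F_2\) and grading group \(C_2\). In the first module, \(S_1,S_2\) both sit in degree \(e\), so \(\{1,2\}\) is a face. In the second, \(S_2\) is shifted to the other degree, so \(\{1,2\}\) is not a face. In both cases the graph is \(K_{1,1}\): the vertices are the two singleton elements, while the full-support element in the first module is not a vertex. Condition~\eqref{eq:(U)} holds in both, since \({\mathcal{C}}_\varnothing^\Gamma=\varnothing\). The universal-vertex issue is handled because \(U(\Gamma)\) consists of nonempty-support vertices, yet this is still compatible with~\eqref{eq:(U)} as literally stated.

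The main obstacle is checking condition~\eqref{eq:(U)} for the counterexample. For \(K_{1,1}\) both vertices are universal while \({\mathcal{C}}_\varnothing^\Gamma=\varnothing\), so \eqref{eq:(U)} fails. I would therefore switch to an example in which no vertex is universal, for instance \(s=3\) with weights at least \(2\). Take \(R=\mathbb F_3^3\) and \(S_\alpha=\mathbb F_3\), each singleton class of size \(2\). In the first module all three modules sit in degree \(e\), so \(\{1,2,3\}\) is a face. In the second, \(S_3\) is graded in a different degree but the two-element faces are kept by using \(\mathbb F_3\) appropriately; I would aim to keep every two-element support realized in both. The graphs agree because full-support elements are not vertices in either case, so \(\Gamma_H\) depends only on the proper faces and the class sizes. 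Arranging for the class sizes of the two-element faces to match in both modules is the delicate point. If grading shifts destroy some of these faces, I would instead use an \(H\)-grading with multiple homogeneous components per module so that the class counts for the proper faces coincide in the two modules.
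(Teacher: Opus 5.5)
Your handling of the two positive assertions is correct and matches the paper's proof. The paper simply combines Proposition~\ref{proo:4.5} with Theorem~\ref{thrm:adjacency} for part~(1), or with Corollary~\ref{cor:vertices} for part~(2), and then applies Theorem~\ref{thrm:graph-reconstruction}.

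The gap is the negative assertion in part~(2): you never produce a valid example. Your $\mathbb F_2$, $s=2$ example violates \eqref{eq:(U)}, as you eventually notice. The earlier sentence claiming that \eqref{eq:(U)} holds there is false and should be removed. Your replacement plan with $R=\mathbb F_3^3$ is not merely delicate but impossible:
\begin{enumerate}
\item Each $S_\alpha=\mathbb F_3$ has additive group of prime order, so it is concentrated in a single degree $g_\alpha$. Hence $\{\alpha,\beta\}$ is a face exactly when $g_\alpha=g_\beta$.
\item With $s=3$, the second module is forced to realize all three two-element supports. They are faces of the first module, and they are represented by vertices by Corollary~\ref{cor:vertices}. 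Under \eqref{eq:(U)}, Theorem~\ref{thrm:graph-reconstruction} shows that isomorphic graphs have the same represented supports up to relabeling.
\item Three pairwise faces force $g_1=g_2=g_3$, hence the full face.
\end{enumerate}

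An $s=3$ example needs gr-simple modules with degree supports of size at least two. This requires a nontrivially graded $R$, since over a trivially graded ring each $M_g$ is a submodule. The supports must meet pairwise but not triply, and the class weights must match. This is exactly the paper's $C_2\times C_2$ construction with $K_i=\mathbb F_2[L_i]$ and $S_3$ shifted by $(1,0)$. Your last sentence points in this direction but gives neither a construction nor a weight check.

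Alternatively, your own $s=2$ idea already works once $\mathbb F_2$ is replaced by $\mathbb F_3$. Take $R=\mathbb F_3\times\mathbb F_3$ with the trivial grading by $C_2=\{e,h\}$. Let $M=S_1\oplus S_2$ with both coordinate modules in degree $e$, and let $M'$ be the same module with $S_2$ placed in degree $h$.
\begin{enumerate}
\item In both cases the vertices are the four elements $(a,0)$ and $(0,b)$ with $a,b\neq0$. In $M$ the full-support elements are excluded by Corollary~\ref{cor:vertices}.
\item The graph is $K_{2,2}$ in both cases. It has no universal vertex, so \eqref{eq:(U)} holds.
\item Yet $\{1,2\}\in\Delta_H^{(1)}(M)$, while $\{1,2\}\notin\Delta_H^{(1)}(M')$.
\end{enumerate}
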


\begin{proof}
This follows from Proposition~\ref{proo:4.5},
Theorem~\ref{thrm:adjacency}, Corollary~\ref{cor:vertices},
and Theorem~\ref{thrm:graph-reconstruction}.
\end{proof}

The qualification in the second part of Corollary~\ref{cor:incidence-recovery} is needed.

\begin{example}
Let \(H=C_2\times C_2\), written additively, and set
\(L_1=\{(0,0),(1,0)\}\), \(L_2=\{(0,0),(0,1)\}\), and \(L_3=\{(0,0),(1,1)\}\). For \(i=1,2,3\), let \(K_i=\mathbb F_2[L_i]\) with its natural \(H\)-grading, and put \(R=K_1\times K_2\times K_3\). Consider \(S_i=K_i\) as a graded \(R\)-module through the \(i\)-th coordinate projection. Each \(S_i\) is gr-simple. Let \(M=S_1\oplus S_2\oplus S_3\). Since, \(L_1\cap L_2\cap L_3=\{(0,0)\}\), we have \(\{1,2,3\}\in{\Delta_H^{(1)}}(M)\). Now, shift the grading of \(S_3\) by \((1,0)\), and write \(S_3'\) for the shifted module. Its degree support is \((1,0)+L_3=\{(1,0),(0,1)\}\). Put \(M'=S_1\oplus S_2 \oplus S_3'\). The three pairwise intersections remain nonempty, but the triple intersection is empty. Hence, \(\{1,2,3\}\notin{\Delta_H^{(1)}}(M')\).

For both \(M\) and \(M'\), each singleton obstruction class has size \(2\), while each two-element class has size \(1\). Thus, the proper nonempty obstruction classes have the same weights and disjointness relations. By Theorem~\ref{thrm:structure-theorem}, \(\Gamma_H(M)\cong\Gamma_H(M')\), although the two incidence complexes differ in the full face.
\end{example}
\subsection{Kneser layers}

The quotient graph need not be complete multipartite. Fixed-size support layers already exhibit the Kneser-graph structure. For \(1\le k<s\), define
\[
\Delta_{H,k}^{(1)}(M) = \{P\in{\Delta_H^{(1)}}(M):|P|=k\}.
\]
Recall that the Kneser graph \(KG(s,k)\) has as its vertices the \(k\)-element subsets of an \(s\)-element set, with two vertices adjacent exactly when the corresponding subsets are disjoint. This gives the following Kneser-layer description.

\begin{theorem}
\label{thrm;5-kneser}
Let \(1\le k<s\). If
\[
\Delta_{H,k}^{(1)}(M)=\binom{{I_1}}{k},
\]
then the quotient graph induced by the \(k\)-element obstruction classes is isomorphic to \(KG(s,k)\). Moreover, the corresponding induced subgraph of \(\Gamma_H(M)\) is obtained from \(KG(s,k)\) by replacing each vertex \(P\) with the independent set \({\mathcal{C}}_P^\Gamma\).
\end{theorem}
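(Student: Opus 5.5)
The plan is to show that, under the hypothesis \(\Delta_{H,k}^{(1)}(M)=\binom{I_1}{k}\), every \(k\)-element subset \(P\subseteq I_1\) is a represented support in \({\mathcal S}_\Gamma(M)\). The Kneser structure is then a direct consequence of Theorem~\ref{thrm:structure-theorem}. The argument has three steps.

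\emph{Step 1: each \(k\)-subset is realized by a vertex.} Fix \(P\in\binom{I_1}{k}\).
\begin{enumerate}
\item By Proposition~\ref{proo:4.5}, since \(P\in\Delta_H^{(1)}(M)\), there is a nonzero homogeneous \(x\) with \(\sigma(x)=P\). So \({\mathcal C}_P\neq\varnothing\).
\item We must check that such an \(x\) lies in \(Z_H(M)^*\). Because \(k<s\), we may choose \(\alpha\in I_1\setminus P\).
\item Take a nonzero homogeneous element \(z\in S_\alpha\) (the block \(E_\alpha\) is nonzero). Then \(\sigma(z)=\{\alpha\}\), which is disjoint from \(P\), so \(z\neq x\).
\item Corollary~\ref{cor:Main-AT} now shows that \(x\) and \(z\) are both vertices and are adjacent.
\end{enumerate}
Hence every element of \({\mathcal C}_P\) is a vertex, so \({\mathcal C}_P^\Gamma={\mathcal C}_P\neq\varnothing\) and \(P\in{\mathcal S}_\Gamma(M)\). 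The hypothesis \(k<s\) is used exactly here, to guarantee a free index \(\alpha\). When \(I_{\ge2}\neq\varnothing\), Theorem~\ref{thrm:adjacency} would also do.

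\emph{Step 2: the quotient layer is \(KG(s,k)\).} Let \(\mathcal Q_k\) be the subgraph of \(\mathcal Q(M)\) induced on the \(k\)-element represented supports. By Step~1, its vertex set is exactly \(\binom{I_1}{k}\). By the definition of \(\mathcal Q(M)\), two distinct supports \(P,Q\) are adjacent precisely when \(P\cap Q=\varnothing\). This is verbatim the adjacency of \(KG(s,k)\) on the \(s\)-element set \(I_1\), so the identity map on \(\binom{I_1}{k}\) is the required isomorphism.

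\emph{Step 3: the induced subgraph of \(\Gamma_H(M)\).} Consider the vertex set \(\bigcup_{|P|=k}{\mathcal C}_P^\Gamma\). Apply Theorem~\ref{thrm:structure-theorem}; this is legitimate because \(k\ge1\), so each such \(P\) is nonempty.
\begin{enumerate}
\item Each class \({\mathcal C}_P^\Gamma\) is independent.
\item Two distinct classes are completely joined when their supports are disjoint and have no edges between them otherwise.
\end{enumerate}
This is exactly the graph obtained from \(KG(s,k)\) by substituting the independent set \({\mathcal C}_P^\Gamma\) for each vertex \(P\).

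I do not expect a genuine obstacle. The only point needing care is Step~1: membership in \(\Delta_H^{(1)}(M)\) gives homogeneous elements with support \(P\), but it does not by itself make them graph vertices. The condition \(k<s\), together with Corollary~\ref{cor:Main-AT}, supplies the missing neighbor.
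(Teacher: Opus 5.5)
Your proposal is correct and follows essentially the same route as the paper: you use \(k<s\) to show that every \(k\)-subset of \({I_1}\) is a represented support, and then read off both the Kneser adjacency and the independent-set blow-up from Theorem~\ref{thrm:structure-theorem}. Your Step~1 spells out the paper's one-line remark that a proper \(k\)-element face represents a vertex class, doing so uniformly with a singleton-support neighbor and Corollary~\ref{cor:Main-AT}.
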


\begin{proof}
Since \(k<s\), every \(k\)-element face is a proper support and therefore represents a vertex class. By Theorem~\ref{thrm:structure-theorem}, two such classes are completely joined exactly when their supports are disjoint. Hence the quotient on the \(k\)-element classes is \(KG(s,k)\). Since every such support is nonempty, each corresponding obstruction class is independent, giving the asserted independent-set blow-up.
\end{proof}

\begin{corollary}
\label{cor:kneser-coloring}
Suppose that \(\Delta_{H,k}^{(1)}(M)=\binom{{I_1}}{k}\) and \(s\ge2k\). Then the induced \(k\)-support layer of \(\Gamma_H(M)\) has chromatic number \(s-2k+2\).
\end{corollary}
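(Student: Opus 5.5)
The plan is to reduce the statement to Lovász's theorem on Kneser graphs, \(\chi(KG(s,k))=s-2k+2\) for \(s\ge2k\), using the blow-up description in Theorem~\ref{thrm;5-kneser}. First, the hypothesis \(s\ge2k\) with \(k\ge1\) gives \(k<s\), so Theorem~\ref{thrm;5-kneser} applies. Let \(G_k\) denote the induced \(k\)-support layer. Then \(G_k\) is obtained from \(KG(s,k)\) by replacing each vertex \(P\) with the independent set \({\mathcal C}_P^\Gamma\), and two such sets are completely joined exactly when their supports are disjoint. Each class \({\mathcal C}_P^\Gamma\) is nonempty, since \(P\) is a proper face and hence a represented support. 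So the replacement is a genuine blow-up: no vertex of \(KG(s,k)\) disappears.

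Next I would compare chromatic numbers in both directions.

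For the upper bound, take any proper colouring \(c\) of \(KG(s,k)\) and give every vertex of \({\mathcal C}_P^\Gamma\) the colour \(c(P)\). Vertices in the same class are nonadjacent, by Theorem~\ref{thrm:structure-theorem}(2). Vertices in different classes \(P\ne Q\) are adjacent only when \(P\cap Q=\varnothing\), and then \(c(P)\ne c(Q)\). Hence \(\chi(G_k)\le\chi(KG(s,k))\).

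For the lower bound, choose one representative \(x_P\in{\mathcal C}_P^\Gamma\) for each \(P\in\binom{I_1}{k}\). By Theorem~\ref{thrm:Main-AT}, the representatives induce a subgraph of \(G_k\) isomorphic to \(KG(s,k)\). Therefore \(\chi(G_k)\ge\chi(KG(s,k))\).

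Combining the two bounds gives \(\chi(G_k)=\chi(KG(s,k))\). The proof then concludes by citing Lovász's theorem, which yields \(s-2k+2\).

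The only nontrivial input is Lovász's theorem itself, and I would cite it rather than reprove it. If a self-contained treatment were wanted, the easy half is the upper bound on \(\chi(KG(s,k))\). Colour a \(k\)-set \(P\) by \(\min P\) if \(\min P\le s-2k+1\), and by the single extra colour \(s-2k+2\) otherwise. The sets receiving the extra colour lie inside a \((2k-1)\)-element set, so any two of them intersect. Sets sharing a colour \(i\le s-2k+1\) all contain \(i\), so they intersect as well. This colouring is therefore proper.

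The lower bound would require the Borsuk--Ulam argument (Lovász, or Greene's short proof), which I would simply quote. One point also needs checking: when \(s=2k\), \(KG(s,k)\) is a perfect matching, and \(2=s-2k+2\) still holds. So the formula is consistent at the boundary case as well.
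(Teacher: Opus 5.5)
Your proposal is correct and follows the paper's route: use the blow-up description of Theorem~\ref{thrm;5-kneser} to reduce to \(KG(s,k)\), then invoke Lovász's theorem. The only difference is that you spell out what the paper's one-line proof leaves implicit, namely that replacing each Kneser vertex by a nonempty independent class leaves the chromatic number unchanged (the upper bound by pulling back a colouring, the lower bound by an induced copy of \(KG(s,k)\)).
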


\begin{proof}
By Theorem~\ref{thrm;5-kneser}, the layer has the same chromatic number as \(KG(s,k)\), so Lovász's theorem \cite{Lovasz1978} gives \(\chi=s-2k+2\).
\end{proof}

\begin{corollary}
\label{cor:not-perfect}
Suppose that \(J\subseteq{I_1}\) has five elements and that every two-element subset of \(J\) belongs to \({\Delta_H^{(1)}}(M)\). Then \(\Gamma_H(M)\) contains an induced \(C_5\). In particular, \(\Gamma_H(M)\) is not perfect.
\end{corollary}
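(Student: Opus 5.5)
The plan is to mimic the explicit construction in the earlier example with \(\mathbb Z_2\oplus\cdots\oplus\mathbb Z_{11}\), now using the incidence complex to realize the supports. Label \(J=\{j_1,\ldots,j_5\}\), read cyclically, and take the five two-element sets
\[
P_1=\{j_1,j_2\},\ P_2=\{j_3,j_4\},\ P_3=\{j_5,j_1\},\ P_4=\{j_2,j_3\},\ P_5=\{j_4,j_5\}.
\]
Consecutive sets \(P_i,P_{i+1}\) (indices mod \(5\)) are disjoint, while nonconsecutive ones share an element. Indeed, \(P_i\) and \(P_{i+2}\) correspond to the pairs \(\{j_{2i-1},j_{2i}\}\) and \(\{j_{2i+3},j_{2i+4}\}\) up to the cyclic relabeling, so the disjointness graph on these five sets is exactly the Petersen-type \(C_5\) inside \(KG(5,2)\). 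I would check this by listing the ten pairs directly, as the earlier example did.

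Next I would realize each \(P_i\) by a vertex. By hypothesis \(P_i\in\Delta_H^{(1)}(M)\), so Proposition~\ref{proo:4.5} gives a nonzero homogeneous \(x_i\) with \(\sigma(x_i)=P_i\). The \(x_i\) are pairwise distinct because their supports are distinct. Each \(x_i\) has a disjoint partner \(x_{i+1}\), so Corollary~\ref{cor:Main-AT} shows that every \(x_i\) is a vertex of \(\Gamma_H(M)\).

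Theorem~\ref{thrm:Main-AT} then gives \(x_i\leftrightarrow x_j\) exactly when \(P_i\cap P_j=\varnothing\), that is, exactly for cyclically consecutive indices. So \(\{x_1,\ldots,x_5\}\) induces \(C_5\). Since \(\omega(C_5)=2<3=\chi(C_5)\), this finite induced subgraph violates the definition of perfectness, and \(\Gamma_H(M)\) is not perfect.

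The only point needing care is the combinatorial verification that the chosen five pairs have disjointness graph \(C_5\) and not something larger. Everything else is a direct citation of Proposition~\ref{proo:4.5} and Theorem~\ref{thrm:Main-AT}. Note that no hypothesis on \(I_{\ge2}\) or on finiteness is needed, since the argument uses only the induced subgraph on these five vertices.
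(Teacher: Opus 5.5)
Your proposal is correct and follows essentially the same argument as the paper: the same five two-element supports (your $\{j_1,j_2\},\{j_3,j_4\},\{j_5,j_1\},\{j_2,j_3\},\{j_4,j_5\}$), realized via Proposition~\ref{proo:4.5}, with Theorem~\ref{thrm:Main-AT} showing they induce $C_5$. The one difference is how you show each $x_i$ is a vertex: you use its disjoint partner $x_{i+1}$ and Corollary~\ref{cor:Main-AT}, which is slightly more self-contained than the paper's brief appeal to $|P_i|<s$.
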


\begin{proof}
After relabeling the elements of \(J\), write \(J=\{1,2,3,4,5\}\). Consider the five supports
\[
P_1=\{1,2\},~
P_2=\{3,4\},~
P_3=\{1,5\},~
P_4=\{2,3\},~
P_5=\{4,5\}.
\]
By hypothesis, each \(P_i\) is realized, and since \(|P_i|<s\), the corresponding obstruction class contains a vertex; choose \(x_i\in{\mathcal{C}}_{P_i}^\Gamma\). The disjointness graph of \(P_1,\ldots,P_5\) is exactly a \(5\)-cycle: consecutive supports are disjoint and nonconsecutive supports intersect. Hence, Theorem~\ref{thrm:Main-AT} shows that \(x_1,\ldots,x_5\) induce \(C_5\).

Since \(C_5\) is not perfect and every induced subgraph of a perfect graph is perfect, it follows that \(\Gamma_H(M)\) is not perfect.
\end{proof}

\section{Finite Modules, Class Weights and Spectrum}

Throughout this section we impose the stronger hypothesis that \(M\) is finite as a set. This is stronger than finite generation and is used here only to make the class counts and adjacency spectrum finite.

\subsection{Class weights and enumeration}

Write \(B=M^{(\ge2)}\). For \(g\in H\), let \(b_g=|B_g|\). If \({I_{\ge 2}}=\varnothing\), we adopt the convention \(B_g=\{0\}\), so that \(b_g=1\). For each \(\alpha\in{I_1}\), put \(q_{\alpha,g}=|(S_\alpha)_g|\).

\begin{theorem}
\label{thrm:SC-enumeration}
Let \(P\in{\Delta_H^{(1)}}(M)\) be nonempty. Then
\[
|{\mathcal{C}}_P| = \sum_{g\in H} b_g\prod_{\alpha\in P}(q_{\alpha,g}-1).
\]
Moreover,
\[
|{\mathcal{C}}_\varnothing| = \sum_{g\in H}(b_g-1).
\]
\end{theorem}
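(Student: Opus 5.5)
The plan is to count homogeneous elements degree by degree, using the splitting \(M=M^{(1)}\oplus B\) with \(B=M^{(\ge2)}\). Every block carries the componentwise grading, so for each \(g\in H\) we have
\[
M_g=\Bigl(\bigoplus_{\alpha\in I_1}(S_\alpha)_g\Bigr)\oplus B_g .
\]
A nonzero homogeneous element lies in \(M_g\) for exactly one \(g\), namely its degree. Hence \(\mathcal C_P\) is the disjoint union over \(g\in H\) of the sets \(\mathcal C_P\cap M_g\), and it suffices to count each piece. Since \(M\) is finite, only finitely many \(g\) have \(M_g\neq0\), so all sums are finite.

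Fix \(g\). An element \(x\in M_g\) is determined by the tuple \((x^{(\alpha)})_{\alpha\in I_1}\), with \(x^{(\alpha)}\in(S_\alpha)_g\), together with its \(B\)-part \(b\in B_g\). By definition, \(\sigma(x)=\{\alpha\in I_1:x^{(\alpha)}\neq0\}\). The condition \(\sigma(x)=P\) therefore says two things. For \(\alpha\in P\), the coordinate \(x^{(\alpha)}\) ranges over \((S_\alpha)_g\setminus\{0\}\), giving \(q_{\alpha,g}-1\) choices. For \(\alpha\in I_1\setminus P\), the coordinate is zero. The \(B\)-part \(b\in B_g\) is unrestricted, because blocks of multiplicity at least two never enter \(\sigma\).

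When \(P\neq\varnothing\), every such \(x\) is automatically nonzero. Its \(P\)-coordinates are homogeneous of degree \(g\), so \(x\) itself is homogeneous. The count in degree \(g\) is therefore \(b_g\prod_{\alpha\in P}(q_{\alpha,g}-1)\). Summing over \(g\) gives the first formula. If \(P\not\subseteq J_g\), some factor \(q_{\alpha,g}-1\) vanishes, which matches the fact that no element of degree \(g\) has support \(P\). If \(I_{\ge2}=\varnothing\), the convention \(b_g=1\) accounts for the single choice \(b=0\).

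For \(P=\varnothing\), every \(I_1\)-coordinate must be zero, so \(x=b\in B_g\). We must exclude \(x=0\), giving \(b_g-1\) elements in degree \(g\). Summing over \(g\) gives \(|\mathcal C_\varnothing|=\sum_g(b_g-1)\). This also gives \(0\) when \(I_{\ge2}=\varnothing\), consistent with Proposition~\ref{prop:Non-empty}.

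The main point needing care is the disjointness of the degree pieces, that is, that the zero element is not counted once per degree. This is handled by the nonzero requirement: it is automatic for \(P\neq\varnothing\) and is imposed by the \(-1\) in the empty case. Beyond that, one only has to check that \(M_g\) really decomposes as a product of coordinate components, which follows from the componentwise grading assumed on every block.
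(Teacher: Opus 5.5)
Your proposal is correct and follows the paper's proof essentially step for step. Both arguments count degree by degree using $M_g=\bigl(\bigoplus_{\alpha\in I_1}(S_\alpha)_g\bigr)\oplus B_g$, with $q_{\alpha,g}-1$ choices for each $\alpha\in P$, $b_g$ unrestricted choices in $B_g$, disjointness of the degree pieces coming from uniqueness of degree for nonzero homogeneous elements, and the $-1$ removing zero in the empty-support case. (A minor point: the splitting of $M_g$ comes from the blocks $E_\alpha$ being graded submodules of $M$, not from the componentwise grading inside each block, but this does not affect your argument.)
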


\begin{proof}
Fix a nonempty \(P\in{\Delta_H^{(1)}}(M)\), and count the elements of \({\mathcal{C}}_P\) one homogeneous degree at a time. For \(g\in H\), an element \(x\in M_g\) has \(\sigma(x)=P\) precisely when its \(\alpha\)-th multiplicity-one coordinate is nonzero for every \(\alpha\in P\), and zero for every \(\alpha\in{I_1}\setminus P\). Hence each \(\alpha\in P\) contributes \(q_{\alpha,g}-1\) choices. The component of \(x\) in \(B_g\) is unrestricted and contributes \(b_g\) choices. Thus the number of degree-\(g\) elements with obstruction support \(P\) is
\[
b_g\prod_{\alpha\in P}(q_{\alpha,g}-1).
\]
Since \(P\neq\varnothing\), all such elements are nonzero. A nonzero homogeneous element has a unique degree, so these degree-by-degree counts are disjoint. Summing over the homogeneous degrees gives
\[
|{\mathcal{C}}_P|
=
\sum_{g\in H}
b_g\prod_{\alpha\in P}(q_{\alpha,g}-1).
\]

If \(P=\varnothing\), every multiplicity-one coordinate must vanish. Thus \({\mathcal{C}}_\varnothing\) consists exactly of the nonzero homogeneous elements of \(B\). There are \(b_g-1\) such elements in degree \(g\), and therefore
\[
|{\mathcal{C}}_\varnothing| = \sum_{g\in H}(b_g-1).
\]
Since \(M\) is finite, only finitely many terms in these sums are nonzero.
\end{proof}

\noindent{\bf Note:} In the trivially graded case, the formulas simplify considerably. Writing
\(b=|M^{(\ge2)}|\) and \(q_\alpha=|S_\alpha|\) for
\(\alpha\in{I_1}\), we obtain
\[
|{\mathcal{C}}_\varnothing|=b-1,~|{\mathcal{C}}_P| = b\prod_{\alpha\in P}(q_\alpha-1)
\]
for every nonempty \(P\in{\Delta_H^{(1)}}(M)\).

\begin{lemma}
\label{lemma:sets-equality}
If \(P\in{\mathcal{S}}_\Gamma(M)\), then \({\mathcal{C}}_P^\Gamma={\mathcal{C}}_P\).
\end{lemma}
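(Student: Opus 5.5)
Since \({\mathcal{C}}_P^\Gamma={\mathcal{C}}_P\cap V(\Gamma_H(M))\subseteq{\mathcal{C}}_P\) by definition, only the reverse inclusion needs proof. The plan is to show that whether an element \(x\) with \(\sigma(x)=P\) is a vertex depends only on \(P\), not on \(x\) itself. Since \(P\in{\mathcal{S}}_\Gamma(M)\), some element of \({\mathcal{C}}_P\) is already a vertex, and this will then give the claim.

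First, consider the case \({I_{\ge 2}}\neq\varnothing\). Theorem~\ref{thrm:adjacency} gives \(Z_H(M)^*=h(M)\setminus\{0\}\), so every element of \({\mathcal{C}}_P\) is a vertex and the equality is immediate. Next, suppose \({I_{\ge 2}}=\varnothing\). Corollary~\ref{cor:vertices} describes the vertices as exactly the nonzero homogeneous \(x\) with \(\sigma(x)\neq{I_1}\). Because \(P\in{\mathcal{S}}_\Gamma(M)\), we may pick a vertex \(y\in{\mathcal{C}}_P^\Gamma\). Then \(P=\sigma(y)\neq{I_1}\). Consequently every \(x\in{\mathcal{C}}_P\) satisfies \(\sigma(x)=P\neq{I_1}\), and Corollary~\ref{cor:vertices} makes \(x\) a vertex.

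As an alternative that avoids the case split, one can argue directly from the adjacency criterion. Take \(x\in{\mathcal{C}}_P\) and a vertex \(y\in{\mathcal{C}}_P^\Gamma\) with a neighbour \(z\). By Theorem~\ref{thrm:Main-AT}, \(\sigma(z)\cap P=\varnothing\), so \(\sigma(z)\cap\sigma(x)=\varnothing\). If \(z\neq x\), Corollary~\ref{cor:Main-AT} shows that \(x\) is a vertex. If \(z=x\), then \(\sigma(x)=\sigma(z)\) is both equal to \(P\) and disjoint from \(P\), so \(P=\varnothing\). In that case \({\mathcal{C}}_\varnothing\neq\varnothing\) forces \({I_{\ge 2}}\neq\varnothing\), and we are back in the first case.

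There is no real obstacle in either route. The one step that needs care is the degenerate situation \(z=x\), which arises only for the empty support and is handled by Proposition~\ref{prop:Non-empty} together with Theorem~\ref{thrm:adjacency}.
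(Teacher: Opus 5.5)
Your proof is correct, but your main route differs from the paper's. You argue globally from the descriptions of the vertex set established earlier: Theorem~\ref{thrm:adjacency} when \({I_{\ge 2}}\neq\varnothing\), and Corollary~\ref{cor:vertices} when \({I_{\ge 2}}=\varnothing\). The lemma then reduces to noting that a represented support cannot equal \({I_1}\).

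The paper's proof is essentially your ``alternative'' with the roles renamed. It takes \(x\in{\mathcal{C}}_P^\Gamma\) with a neighbour \(y\), notes that \(P\cap\sigma(y)=\varnothing\), and concludes from Theorem~\ref{thrm:Main-AT} that every \(z\in{\mathcal{C}}_P\) other than \(y\) is adjacent to \(y\).

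The two routes buy different things. The paper's local argument needs no case split and uses only the adjacency criterion: one neighbour of one class member is a common neighbour of the whole class. Your case-split version depends on two earlier results, but in exchange it makes explicit that a nonempty class \({\mathcal{C}}_P\) can contain no vertex only when \({I_{\ge 2}}=\varnothing\) and \(P={I_1}\).

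Your alternative route can be simplified. In the degenerate case \(z=x\), you do not need the detour through Proposition~\ref{prop:Non-empty} and Theorem~\ref{thrm:adjacency}. There \(z\) is a neighbour of \(y\), so it is already a vertex. The analogous case \(z=y\) in the paper is equally trivial, which is why the paper passes over it silently.
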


\begin{proof}
Choose \(x\in{\mathcal{C}}_P^\Gamma\) with a neighbor \(y\); then Theorem~\ref{thrm:Main-AT} gives \(P\cap\sigma(y)=\varnothing\). Every \(z\in{\mathcal{C}}_P\) distinct from \(y\) is therefore adjacent to \(y\), so every element of \({\mathcal{C}}_P\) is a vertex.
\end{proof}

\begin{proposition}
\label{prop:degree}
Let \(P\in{\mathcal{S}}_\Gamma(M)\) and \(x\in{\mathcal{C}}_P\). Then
\[
\deg(x)=
\begin{cases}
|V(\Gamma_H(M))|-1,
& P=\varnothing,\\[1mm]
\displaystyle
\sum_{\substack{Q\in{\mathcal{S}}_\Gamma(M)\\
P\cap Q=\varnothing}}
|{\mathcal{C}}_Q|,
& P\neq\varnothing.
\end{cases}
\]
\end{proposition}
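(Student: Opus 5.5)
First, \(x\) is a vertex, since \(P\in{\mathcal S}_\Gamma(M)\) and Lemma~\ref{lemma:sets-equality} give \(x\in{\mathcal C}_P={\mathcal C}_P^\Gamma\). Moreover, \(V(\Gamma_H(M))\) is the disjoint union of the classes \({\mathcal C}_Q^\Gamma\) over \(Q\in{\mathcal S}_\Gamma(M)\), because each vertex has a unique obstruction support. By the same lemma, each \({\mathcal C}_Q^\Gamma\) equals \({\mathcal C}_Q\). Both cases then follow by counting neighbours class by class using Theorem~\ref{thrm:structure-theorem}.

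\emph{Case \(P=\varnothing\).} The class \({\mathcal C}_\varnothing\) is nonempty, so Proposition~\ref{prop:Non-empty} gives \({I_{\ge 2}}\neq\varnothing\). Theorem~\ref{thrm:adjacency} then says that \(x\) is universal. Hence \(\deg(x)=|V(\Gamma_H(M))|-1\). Alternatively, apply Theorem~\ref{thrm:Main-AT} directly: \(\varnothing\cap\sigma(y)=\varnothing\) for every vertex \(y\neq x\).

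\emph{Case \(P\neq\varnothing\).} Let \(y\) be a vertex with \(y\in{\mathcal C}_Q\). Theorem~\ref{thrm:Main-AT} gives \(x\leftrightarrow y\) exactly when \(y\neq x\) and \(P\cap Q=\varnothing\). If \(Q=P\), the intersection \(P\cap P=P\) is nonempty, so no vertex of \(x\)'s own class is a neighbour; in particular \(x\) itself never has to be subtracted. If \(Q\neq P\) and \(P\cap Q=\varnothing\), then \(y\neq x\) automatically, and every element of \({\mathcal C}_Q\) is a neighbour. If \(P\cap Q\neq\varnothing\), the class contributes nothing. Summing over the disjoint classes gives \(\deg(x)=\sum_{Q\in{\mathcal S}_\Gamma(M),\,P\cap Q=\varnothing}|{\mathcal C}_Q|\). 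The case \(Q=\varnothing\) is included when present, and is correct since \({\mathcal C}_\varnothing\) is completely joined to every other class.

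The only delicate point is the replacement of \({\mathcal C}_Q^\Gamma\) by \({\mathcal C}_Q\) in the sum, which is exactly what Lemma~\ref{lemma:sets-equality} provides. This is also why the sum is indexed by represented supports only: an unrepresented \(Q\), such as \({I_1}\) in the multiplicity-free case, may have \({\mathcal C}_Q\neq\varnothing\) but contributes no vertices. Finiteness of \(M\) ensures that all the degrees are finite.
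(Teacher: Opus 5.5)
Your proof is correct and follows the paper's route. For \(P=\varnothing\) you use universality of the empty-support class, and for \(P\neq\varnothing\) you count neighbours class by class using Theorems~\ref{thrm:Main-AT} and~\ref{thrm:structure-theorem}; your explicit appeal to Lemma~\ref{lemma:sets-equality}, which replaces \({\mathcal{C}}_Q^\Gamma\) by \({\mathcal{C}}_Q\), makes precise a step the paper's two-line proof leaves implicit.
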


\begin{proof}
If \(P=\varnothing\), Theorem~\ref{thrm:Main-AT} makes \(x\) universal, so \(\deg(x)=|V(\Gamma_H(M))|-1\). If \(P\neq\varnothing\), Theorem~\ref{thrm:structure-theorem} shows that the neighbors of \(x\) are exactly the vertices in the classes \({\mathcal{C}}_Q\) with \(P\cap Q = \varnothing\).
\end{proof}

Vertices with the same obstruction support have the same degree. If \(d_P\) denotes this common degree for \(P\in{\mathcal{S}}_\Gamma(M)\), then
\[
2|E(\Gamma_H(M))| = \sum_{P\in{\mathcal{S}}_\Gamma(M)} |{\mathcal{C}}_P|\,d_P.
\]

\subsection{Equitable partition and spectrum}

Since \(M\) is finite, \(\Gamma_H(M)\) is finite. Assume throughout this subsection that \(V(\Gamma_H(M))\neq\varnothing\). We use equitable partitions and quotient matrices in the usual sense; see \cite{BrouwerHaemers2012,GodsilRoyle2001}.

A partition \(V(G)=V_1\mathbin{\dot\cup}\cdots\mathbin{\dot\cup}V_r\) is \emph{equitable} if, for every \(i,j\), each vertex of \(V_i\) has the same number \(q_{ij}\) of neighbors in \(V_j\). The matrix \(Q=(q_{ij})\) is the corresponding quotient matrix.

Write \({\mathcal{S}}_\Gamma(M)=\{P_1,\ldots,P_r\}\) and \(c_i=|{\mathcal{C}}_{P_i}|\). By Lemma~\ref{lemma:sets-equality}, the classes \({\mathcal{C}}_{P_1},\ldots,{\mathcal{C}}_{P_r}\) partition \(V(\Gamma_H(M))\), and Theorem~\ref{thrm:structure-theorem} shows that this partition is equitable.

Let \(Q=(Q_{ij})\) be the corresponding quotient matrix. Then
\[
Q_{ij}=
\begin{cases}
c_j, & i\neq j \text{ and } P_i\cap P_j=\varnothing,\\
c_i-1, & i=j \text{ and } P_i=\varnothing,\\
0, & \text{otherwise}.
\end{cases}
\]

If \(D=\operatorname{diag}(c_1,\ldots,c_r)\), then \(D^{1/2}QD^{-1/2}\) is symmetric, so the eigenvalues of \(Q\) are real. This quotient matrix gives the spectral reduction below.

\begin{theorem}
\label{thrm:spectrum}
Let \(Q\) be the quotient matrix defined above. Then every eigenvalue of \(Q\) is an eigenvalue of \(\Gamma_H(M)\). More precisely, as a multiset,
\[
\Spec(\Gamma_H(M))=\Spec(Q)\uplus\{-1^{\,\mu_{-1}}\}\uplus\{0^{\,\mu_0}\}.
\]
Here \(\mu_{-1}=|{\mathcal{C}}_\varnothing|-1\) when \({\mathcal{C}}_\varnothing\neq\varnothing\), and \(\mu_{-1}=0\) otherwise, while
\(\mu_0=\sum_{\substack{P\in{\mathcal{S}}_\Gamma(M)\\P\neq\varnothing}}(|{\mathcal{C}}_P|-1)\).

Equivalently, if \(A\) is the adjacency matrix of \(\Gamma_H(M)\), then
\[
\det(\lambda I-A)=\det(\lambda I-Q (\lambda+1)^{\mu_{-1}}\lambda^{\mu_0}.
\]
Here \(\uplus\) denotes multiset union. If \(0\) or \(-1\) is already an eigenvalue of \(Q\), its multiplicity is increased by \(\mu_0\) or \(\mu_{-1}\), respectively.
\end{theorem}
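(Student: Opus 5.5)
Order the vertices class by class and write the adjacency matrix $A$ in block form $A=(A_{ij})$, with block $A_{ij}$ of size $c_i\times c_j$. By Theorem~\ref{thrm:structure-theorem} and Lemma~\ref{lemma:sets-equality}, $A_{ij}=J_{c_i\times c_j}$ (all ones) when $i\neq j$ and $P_i\cap P_j=\varnothing$, and $A_{ij}=0$ for other pairs $i\neq j$. The diagonal blocks are $A_{ii}=J_{c_i}-I_{c_i}$ if $P_i=\varnothing$ and $A_{ii}=0$ if $P_i\neq\varnothing$. The plan is to split $\mathbb R^{V}$ into two $A$-invariant subspaces and read off the spectrum on each.

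First, let $W$ be the span of the characteristic vectors $\mathbf 1_{\mathcal C_{P_i}}$. The equitable-partition property gives $A\,\mathbf 1_{\mathcal C_{P_j}}=\sum_i Q_{ij}\mathbf 1_{\mathcal C_{P_i}}$, with the indices arranged to match the convention for $Q$. Equivalently $AS=SQ$, where $S$ is the $|V|\times r$ characteristic matrix. Because the classes are nonempty and disjoint, $S$ has full column rank. Hence $W$ is $A$-invariant, and the restriction $A|_W$ is represented by $Q$, possibly up to transpose; transposition does not change the spectrum. It follows that the characteristic polynomial of $A|_W$ is $\det(\lambda I-Q)$, which also proves that every eigenvalue of $Q$ is an eigenvalue of $A$.

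Second, take $W^\perp$: the vectors whose entries sum to zero on each class. Since $A$ is symmetric, $W^\perp$ is also invariant. On such a vector $v$, each all-ones block $J$ contributes nothing. Thus $(Av)|_{\mathcal C_{P_i}}$ equals $-v|_{\mathcal C_{P_i}}$ if $P_i=\varnothing$, and $0$ otherwise. So $A$ acts on the zero-sum vectors supported on $\mathcal C_\varnothing$ as $-1$; this space has dimension $|\mathcal C_\varnothing|-1=\mu_{-1}$. On the zero-sum vectors supported on nonempty classes it acts as $0$, a space of dimension $\sum_{P\neq\varnothing}(c_P-1)=\mu_0$.

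The dimensions add up correctly, since $r+\mu_{-1}+\mu_0=\sum_i c_i=|V|$, so $\mathbb R^V=W\oplus W^\perp$ accounts for the whole space. The characteristic polynomial therefore factors as $\det(\lambda I-Q)(\lambda+1)^{\mu_{-1}}\lambda^{\mu_0}$, which is the stated multiset identity. Coinciding eigenvalues simply add their multiplicities.

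The main obstacle is purely bookkeeping. One must check the index convention of $Q$ (row versus column) against $AS=SQ$; if it comes out transposed, note that $Q$ and $Q^{T}$ are similar, or use $D^{1/2}QD^{-1/2}$. One must also handle the empty class correctly: its diagonal entry $c_i-1$ arises from $J-I$ on the $W$ side, and it yields $-1$ on $W^\perp$.
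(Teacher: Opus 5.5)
Your proposal is correct and follows essentially the same route as the paper. The span $W$ of the class indicator vectors carries $Q$; with the paper's convention ($Q_{ij}$ is the number of neighbours in class $j$ of a vertex in class $i$) one gets $AS=SQ$ exactly, so no transpose is needed. The complement $W^\perp=U_1\oplus\cdots\oplus U_r$ of classwise zero-sum vectors then splits into a $-1$-eigenspace on $\mathcal C_\varnothing$ and a $0$-eigenspace on the nonempty classes. The only cosmetic difference is that you cite the symmetry of $A$ for the invariance of $W^\perp$, whereas the paper just checks the action on each $U_i$ directly, which you also do.
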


\begin{proof}
Let \(\Gamma=\Gamma_H(M)\), and let \(A\) be its adjacency matrix. For \(1\le i\le r\), denote by \(\mathbf 1_i\) the characteristic vector of \({\mathcal{C}}_{P_i}\), and let \(W=\operatorname{span}\{\mathbf 1_1,\ldots,\mathbf 1_r\}\).

Since the partition is equitable, \(W\) is invariant under \(A\). For each \(j\), the vector \(A\mathbf 1_j\) is constant on every class \({\mathcal{C}}_{P_i}\), with value \(Q_{ij}\). Hence \(\displaystyle A\mathbf 1_j=\sum_{i=1}^r Q_{ij}\mathbf 1_i\). Therefore the matrix of \(A|_W\), relative to the basis \(\mathbf 1_1,\ldots,\mathbf 1_r\), is \(Q\). In particular, every eigenvalue of \(Q\) occurs in the adjacency spectrum of \(\Gamma\).

For each \(i\), let
\[
U_i=\left\{v\in\mathbb R^{V(\Gamma)}:\operatorname{supp}(v)\subseteq{\mathcal{C}}_{P_i} \text{ and } \sum_{x\in{\mathcal{C}}_{P_i}}v_x=0 \right\}.
\]
Then \(\dim U_i=c_i-1\), and \(\mathbb R^{V(\Gamma)} =W\oplus U_1\oplus\cdots\oplus U_r\).

Suppose first that \(P_i\neq\varnothing\). Then \({\mathcal{C}}_{P_i}\) is independent. Moreover, every vertex outside \({\mathcal{C}}_{P_i}\) is adjacent either to all vertices of this class or to none of them. If \(v\in U_i\), the contribution at any vertex outside the class is therefore either zero or the sum of the coordinates of \(v\), which is zero. There is no contribution inside the class. Hence \(Av=0\) for every \(v\in U_i\). Thus the nonempty class \({\mathcal{C}}_{P_i}\) contributes \(c_i-1\) copies of the eigenvalue \(0\). Summing over the nonempty represented classes gives exactly the stated value of \(\mu_0\).

Now suppose that \(P_i=\varnothing\). Then \({\mathcal{C}}_{P_i}={\mathcal{C}}_\varnothing\) is a clique and is completely joined to every other represented class. Let \(v\in U_i\). As before, the coordinates of \(Av\) outside \({\mathcal{C}}_\varnothing\) vanish. For \(x\in{\mathcal{C}}_\varnothing\),
\[
(Av)_x = \sum_{\substack{y\in{\mathcal{C}}_\varnothing\\y\neq x}}v_y = -v_x,
\]
because the coordinates of \(v\) sum to zero. Hence \(Av=-v\) on \(U_i\). Thus, the empty-support class contributes \(|{\mathcal{C}}_\varnothing|-1\) copies of the eigenvalue \(-1\).

Finally,
\[
\dim W+\sum_{i=1}^r\dim U_i = r+\sum_{i=1}^r(c_i-1) = \sum_{i=1}^r c_i = |V(\Gamma)|.
\]
The above invariant subspaces, therefore, exhaust the whole adjacency space. This gives the spectrum, and the characteristic polynomial factorization follows at once.
\end{proof}

\begin{corollary}
\label{cor:rank-nullity}
Let \(A\) be the adjacency matrix of \(\Gamma_H(M)\), and let \(m_B(\lambda)\) denote the algebraic multiplicity of \(\lambda\) as an eigenvalue of a square matrix \(B\). Then
\[
\operatorname{nullity}(A)=\mu_0+m_Q(0), \operatorname{rank}(A) = |V(\Gamma_H(M))|-\mu_0-m_Q(0).
\]
Moreover, \(m_A(-1)=\mu_{-1}+m_Q(-1)\). In particular, \(A\) is singular if and only if \(\mu_0>0\) or \(0\in\Spec(Q)\).
\end{corollary}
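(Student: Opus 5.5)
The corollary is read off from the spectral decomposition in Theorem~\ref{thrm:spectrum}. I will use the characteristic polynomial factorization
\[
\det(\lambda I-A)=\det(\lambda I-Q)\,(\lambda+1)^{\mu_{-1}}\lambda^{\mu_0}.
\]
Comparing the order of vanishing of both sides at \(\lambda=0\) gives \(m_A(0)=m_Q(0)+\mu_0\). The factor \((\lambda+1)^{\mu_{-1}}\) does not vanish at \(0\), so it contributes nothing there. Comparing the order of vanishing at \(\lambda=-1\) gives \(m_A(-1)=m_Q(-1)+\mu_{-1}\), since the factor \(\lambda^{\mu_0}\) does not vanish at \(-1\). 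This settles the statement about \(m_A(-1)\).

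To pass from algebraic multiplicity to nullity, I will use that \(A\) is a real symmetric matrix. It is therefore orthogonally diagonalizable, so the geometric multiplicity of \(0\), namely \(\dim\ker A=\operatorname{nullity}(A)\), equals its algebraic multiplicity \(m_A(0)\). Hence \(\operatorname{nullity}(A)=\mu_0+m_Q(0)\). Note that this step does not require \(Q\) to be symmetric, because \(m_Q(0)\) enters only through the characteristic polynomial identity.

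The rank formula then follows from rank--nullity applied to the \(|V(\Gamma_H(M))|\times|V(\Gamma_H(M))|\) matrix \(A\):
\[
\operatorname{rank}(A)=|V(\Gamma_H(M))|-\mu_0-m_Q(0).
\]
Finally, \(A\) is singular if and only if \(\operatorname{nullity}(A)>0\). Since both summands \(\mu_0\) and \(m_Q(0)\) are nonnegative integers, this holds if and only if \(\mu_0>0\) or \(m_Q(0)>0\), that is, \(0\in\Spec(Q)\).

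The only point requiring care is the identification of nullity with the algebraic multiplicity of \(0\), which is where the symmetry of \(A\) is used. Everything else is bookkeeping from Theorem~\ref{thrm:spectrum}.
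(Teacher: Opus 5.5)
Your proposal is correct and follows the route the paper intends: the paper gives no separate proof, treating the corollary as read off directly from the characteristic-polynomial factorization in Theorem~\ref{thrm:spectrum}, with rank--nullity giving the rank formula. Your appeal to the symmetry of \(A\), which identifies \(\operatorname{nullity}(A)\) with the algebraic multiplicity \(m_A(0)\), is the one step the paper leaves implicit, and you handle it correctly.
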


\vskip 5mm
\noindent{\bf Concluding remarks:}
\vskip 1mm

For the modules considered here, adjacency in \(\Gamma_H(M)\) is determined by the multiplicity-one blocks. The two basic formulas are
\(
(Rx:M)=\bigcap_{\alpha\notin\sigma(x)}\Ann_R(S_\alpha),
x\leftrightarrow y
\text{ if and only if }
\sigma(x)\cap\sigma(y)=\varnothing.
\)
Thus multiplicities greater than one do not enter the adjacency criterion; they affect only the sizes of the obstruction classes. The resulting quotient is a weighted disjoint graph on subsets of \({I_1}\). The reconstruction results also identify the two limitations of this description: universal singleton classes may be indistinguishable from the empty-support class, and in the multiplicity-free case the full face of the incidence complex need not be represented by a vertex.

\vskip 1cm

\noindent{\bf Conflict of Interest}: The authors declare that they have no conflicts of interest.\\
\noindent{\bf Data Availability}: Data sharing is not applicable to this article.

\end{document}